\numberwithin{equation}{section}
\newtheorem{theorem}{Theorem}[section]
\newtheorem{definition}[theorem]{Definition}
\newtheorem{lemma}[theorem]{Lemma}
\newtheorem{remark}[theorem]{Remark}
\def\al{\aligned}
\def\eal{\endaligned}
\def\be{\begin{equation}}
\def\ee{\end{equation}}
\def\lab{\label}
\def\a{\alpha}
\def\e{\epsilon}
\def\M{{\bf M}}
\def\al{\aligned}
\def\pa{\partial}
\def\d{\nabla}
\def\mb{\mathbb}
\DeclareMathOperator{\Ric}{Ric}
\DeclareMathOperator{\vol}{Vol}
\numberwithin{equation}{section}
\let\oldtocsection=\tocsection
\let\oldtocsubsection=\tocsubsection
\let\oldtocsubsubsection=\tocsubsubsection
\renewcommand{\tocsection}[2]{\hspace{0em}\oldtocsection{#1}{#2}}
\renewcommand{\tocsubsection}[2]{\hspace{2em}\oldtocsubsection{#1}{#2}}
\renewcommand{\tocsubsubsection}[2]{\hspace{2em}\oldtocsubsubsection{#1}{#2}}
\begin{document}
\setstcolor{red}

\title[]{Bounds on harmonic radius and  limits of manifolds with bounded Bakry-\'Emery Ricci curvature}
\author{Qi S. Zhang and Meng Zhu}
\address{Department of Mathematics, University of California, Riverside, Riverside, CA 92521, USA}
\email{qizhang@math.ucr.edu}
\address{Department of Mathematics, East China Normal University, Shanghai 200241, China - and Department of Mathematics, University of California, Riverside, Riverside, CA 92521, USA}
\email{mzhu@math.ecnu.edu.cn}
\date{}

\begin{abstract}
Under the usual condition that the volume of a geodesic ball is close to the Euclidean one or the injectivity radii is bounded from below, we prove a lower bound of the $C^{\alpha} \cap W^{1, q}$  harmonic radius for manifolds with bounded Bakry-\'Emery Ricci curvature when the gradient of the potential is bounded. Under these conditions, the regularity that can be imposed on the metrics under harmonic coordinates is only $C^\a \cap W^{1,q}$, where $q>2n$ and $n$ is the dimension of the manifolds. This is almost 1 order lower than that in the classical $C^{1,\a} \cap W^{2, p}$ harmonic coordinates under bounded Ricci curvature condition \cite{And}. The loss of regularity induces some difference in the method of proof, which can also be used to  address the detail of $W^{2, p}$ convergence in the classical case.

Based on this lower bound and the techniques in \cite{ChNa2} and \cite{WZ}, we extend Cheeger-Naber's Codimension 4 Theorem in \cite{ChNa2} to the case where the manifolds have  bounded Bakry-\'Emery Ricci curvature when the gradient of the  potential is bounded.  This result covers Ricci solitons when the gradient of the potential is bounded.

During the proof, we will use a Green's function argument and adopt a linear algebra argument in \cite{Bam}. A new ingradient is to show that the diagonal entries of the matrices in the Transformation Theorem are bounded away from 0. Together these seem to simplify the proof of the Codimension 4 Theorem, even in the case where Ricci curvature is bounded.
\end{abstract}
\maketitle

\tableofcontents

\section{Introduction}

In this paper we extend two important results from the case of bounded Ricci curvature to the case of bounded Bakry-\'Emery curvature with $C^1$ potential. One of these is Anderson's lower bound for harmonic radius \cite{And} and the other is Cheeger-Naber's co-dimension 4 theorem \cite{ChNa2}. While many results in these two cases are parallel, extending these two results require some new effort which we explain now.

In a series of works (\cite{Co}, \cite{ChCo1}, \cite{ChCo2}, \cite{ChCo3}, \cite{ChCo4}, \cite{CCT}, \cite{CoNa}, \cite{ChNa1}, \cite{ChNa2}), Cheeger-Colding-Tian-Naber developed a very deep and powerful theory for studying the Gromov-Hausdorff limits of manifolds with bounded Ricci curvature. In particular, when the manifolds are in addition volume noncollapsed, according to their results, we know that
the Gromov-Hausdorff limits decompose into the union of the regular set and singular set. The regular set is an open convex $C^{1,\a}$ manifold, the singular set has codimension at least 4, and the tangent cone at any point must be a metric cone.

However, there are objects in geometry where the boundedness of the Ricci curvature is not available. One of these is a Ricci soliton under the typical condition that the gradient of the potential is bounded.
More generally, these solitons belong to a class of manifolds where the Bakry-\'Emery Ricci curvature is bounded. The later has become a subject of study by numerous authors.
 Many of the classical geometric and analytic results such as volume comparison theorems and gradient bounds, valid under pointwise Ricci bound,  have been extended to this case in the papers \cite{Q}, \cite{xLi} and \cite{WW}. Recently
  F. Wang and X.H. Zhu \cite{WZ} established analogous results for most of the Cheeger-Colding-Tian-Naber theory.  One notable exception is the codimension 4 theorem for the singular part.
A goal of this paper is to prove such a theorem.

Another case of interest is when the Ricci curvature in only in certain $L^p$ spaces (see e.g. \cite{Ya} or \cite{DWZ} for motivation).
The first effort was made by Petersen-Wei \cite{PeWe1} and \cite{PeWe2}, where they assumed that $|Ric^-|\in L^p$ for some $p>n/2$ and obtained extended Laplacian and volume comparison theorems and continuity of volume under Gromov-Hausdorff limit.
Recently, Tian-Z. Zhang \cite{TZz} successfully extended most of the Cheeger-Colding-Tian-Naber theory except for the codimension 4 theorem for the singular part. Bamler \cite{Bam} proves a codimension 4 theorem for some Ricci flat singular spaces.

In proving these results under weaker Ricci curvature conditions, one needs to extend many key ingredients therein, such as Cheng-Yau gradient estimate, Segment inequality, Poincar\'e inequality, maximum principle, heat kernel estimates, Abresch-Gromoll estimate, and Anderson's bound on harmonic radius.
While many of the extensions are expected to be true and the proofs are analogous, there are notable exceptions. One of them is the bound on harmonic radius in the spirit of Anderson \cite{And}. In that paper, Anderson proved the following result.  Under suitable conditions on volume of balls or injectivity radius, if also the Ricci curvature is bounded, then $C^{1, \a}$ harmonic radius has a positive lower bound and the metric is $C^{1,\a} \cap W^{2, q}$ within such a radius. The lower bound of harmonic radius is very useful in many situations such that in establishing compactness of families of manifolds e.g.
However, one can not expect such a result under Bakry-\'Emery Ricci curvature bound.  Instead one can only expect $C^{\a} \cap W^{1, q}$ property for harmonic radius and the metric. To see this, let us recall the equation connecting metric $g$ and Ricci curvature under a harmonic coordinate chart:
\be
\lab{eqric-metric}
g^{ab}\frac{\pa^2 g_{kl}}{\pa v_a\pa v_b} + Q(\pa g,g)=-2(R_{kl}+\d_k\d_l L) + 2\d_k\d_l L.
\ee Here $Q$ is an expression involving quadratic quantity of $\pa g$.  Assuming  the Bakry-\'Emery Ricci  curvature is bounded, then the right hand side of the equation is the sum of an $L^\infty$ function and the Hessian of the function $L$. So if one wishes $g$ is a $W^{2, p}$ function, one needs to assume that the Hessian of $L$ is $L^p$. However this is not available for us.

The first result of this paper is a lower bound for such harmonic radius under suitable conditions on volume of balls.

In order to state the result rigorously, let us define the $W^{1,q}$ harmonic radius. Let $(\M^n,g)$ be an $n$-dimensional Riemannian manifold, and denote by $B_r(x)$ the geodesic ball in $\M$ centered at $x$ with radius $r$.

\begin{definition}
For $x\in \M$, the $W^{1,q}$ harmonic radius $r_h(x)$ at $x$ is the largest $r\geq 0$ such that there is a coordinate chart $\Phi=(v_1,v_2,\cdots,v_n):B_r(x)\rightarrow \mb{R}^n$ centered at $x$ such that $\Phi$ is a diffeomorphism onto its image, and\\
(1) $\Delta_g v_k=0$, $1\leq k\leq n$;\\
(2) let $g_{ij}=g(\pa_{v_i},\pa_{v_j})$ be the component of the metric $g$ considered as a function on $B_r(x)$. We have
\be\label{W1q bound of g}
\|g_{ij}-Id_{ij}\|_{C^{0}(B_r(x))}+r^{1-\frac{n}{q}}||\partial_{v_k} g_{ij}||_{L^q(B_r(x))}\leq \frac{1}{10},
\ee
where $Id_{ij}$ is the standard Euclidean metric on $\mathbb{R}^n$.
\end{definition}

Our first main result is

\begin{theorem}\label{thm harmonic radius bound}
Let $(\M^n,g)$ be a Riemannian n-manifold and $p$ be a point in $\M^n$. For each $q>2n$, there exist positive constants $\delta=\delta(n,q)$ and  $\theta=\theta(n,q)$ with the following properties.

(a) If $|Ric+\d^2 L|\leq n-1$ with $|\d L|\leq 1$, and
\be\label{volume condition1}
\vol(B_{\delta}(p))\geq (1-\delta)\vol(B_{\delta}(0^n)),
\ee
where $0^n$ denotes the origin of $\mb{R}^n$, then the $W^{1,q}$ harmonic radius $r_h(x)$ satisfies
\[
r_h(x)\geq \theta d(x,\pa B_{\delta^2}(p)),
\]
for all $x\in B_{\delta^2}(p)$.

(b) If $|Ric+\d^2 L|\leq n-1$ with $|\d L|\leq 1$, and the injectivity radius satisfies
\[
inj(x)\geq i_0>0
\]
in $B_{10}(p)$, then the $W^{1,q}$ harmonic radius $r_h(x)$ satisfies
\[
r_h(x)\geq \theta d(x,\pa B_{1}(p)),
\]
for all $x\in B_{1}(p)$.
\end{theorem}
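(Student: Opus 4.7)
I would argue by contradiction via blow-up, in the spirit of Anderson \cite{And}, with one essential modification dictated by the fact that only $|\Ric+\d^2 L|$ and $|\d L|$ are controlled, not $|\Ric|$ itself. Suppose the conclusion fails; fix $q>2n$. Then there exist sequences $(\M_i^n,g_i,p_i)$ satisfying the hypotheses together with points $x_i\in B_{\delta^2}(p_i)$ for which $r_h(x_i)/d(x_i,\pa B_{\delta^2}(p_i))\to 0$. By a standard point-picking applied to the scale-invariant ratio $r_h(\cdot)/d(\cdot,\pa B_{\delta^2}(p_i))$, I would replace $x_i$ if necessary so that $r_i:=r_h(x_i)\to 0$ while $r_h(\cdot)\ge r_i/2$ on a $g_i$-geodesic ball $B_{R_i r_i}(x_i)$ with $R_i\to\infty$. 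Rescale $\tilde g_i:=r_i^{-2}g_i$. Then $r_h^{\tilde g_i}(x_i)=1$, the curvature bound becomes $|\Ric+\d^2 L|_{\tilde g_i}\le r_i^2(n-1)\to 0$, and $|\d L|_{\tilde g_i}\le r_i\to 0$. Under (a), the Bakry--\'Emery Bishop--Gromov inequality (\cite{WW}, \cite{WZ}) together with the choice of $\delta$ forces the volume ratios on the rescaled unit balls to approach the Euclidean value, while under (b) the injectivity radius at $x_i$ in $\tilde g_i$ remains bounded below.

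\textbf{Extraction of the limit and the divergence-form rewriting.} Using the harmonic charts $\Phi_i:B_1^{\tilde g_i}(x_i)\to\IR^n$ provided by $r_h^{\tilde g_i}(x_i)=1$, I would push the pulled-back metrics $h_i:=(\Phi_i^{-1})^*\tilde g_i$ onto a fixed Euclidean ball. The definition \eqref{W1q bound of g} gives uniform $C^0$ and $W^{1,q}$ bounds on $h_i$; since $q>2n$, Morrey embedding yields a uniform $C^\alpha$ bound with $\alpha=1-n/q$. Along a subsequence $h_i\to h_\infty$ in $C^{\alpha'}$ and weakly in $W^{1,q}$ on compact sets. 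The crux is to derive an elliptic equation for $h_\infty$ that does not involve $\d^2 L$. Starting from
\be\lab{eq:plan-div}
g^{ab}\pa_a\pa_b g_{kl}+Q(\pa g,g)=-2(R_{kl}+\d_k\d_l L)+2\d_k\d_l L,
\ee
and using that in harmonic coordinates $\d_k\d_l L$ differs from $\pa_k\pa_l L$ by Christoffel terms quadratic in $\pa g$ (harmlessly absorbed in $Q$), I would rewrite the equation in divergence form $\pa_a(A^{ab}\pa_b h_{kl})=\pa_k F^{(l)}+H$, where $A^{ab}$ is uniformly elliptic and $C^\alpha$-close to $\delta^{ab}$, $F^{(l)}=\pa_l L$ is $L^\infty$, and $H\in L^\infty$ is controlled by $\Ric+\d^2 L$ plus $Q(\pa g,g)$. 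Meyers-type interior $W^{1,q}$ estimates for such divergence-form operators with near-constant $C^\alpha$ (indeed VMO) coefficients give a bound on $\|\pa h_i\|_{L^q}$ depending only on the hypotheses and, critically, not on any pointwise control of $\Hess L$. Testing \eqref{eq:plan-div} against $\phi\in C_c^\infty$ and integrating by parts shifts all derivatives off $L$, so the bounds $|\d L|_{\tilde g_i}\to 0$ and $|\Ric+\d^2L|_{\tilde g_i}\to 0$ force the weak limit equation
\[
h_\infty^{ab}\pa_a\pa_b (h_\infty)_{kl}+Q(\pa h_\infty,h_\infty)=0
\]
on a Euclidean ball.

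\textbf{Closing the contradiction.} Standard bootstrap in the harmonic gauge upgrades $h_\infty$ to a smooth Ricci-flat metric. Volume (resp.\ injectivity) rigidity then forces $h_\infty$ to be flat Euclidean: under (a) from Euclidean-volume unit balls in the limit together with the cone rigidity in \cite{WZ}, and under (b) from Cheeger--Gromov compactness with a positive injectivity radius and vanishing curvature. Since the flat Euclidean metric has infinite $W^{1,q}$ harmonic radius, for any fixed $R>0$ I would exhibit explicit harmonic coordinates on $B_R^{\tilde g_i}(x_i)$ for $i$ large by solving $\Delta_{h_i}v^k=0$ with boundary data close to the Euclidean coordinate functions and verifying \eqref{W1q bound of g} via the divergence-form $W^{1,q}$ estimate already set up. This would give $r_h^{\tilde g_i}(x_i)\ge R$, contradicting the normalization $r_h^{\tilde g_i}(x_i)=1$. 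The main obstacle I expect is precisely this last semicontinuity step: in Anderson's original argument it is performed at the $C^{1,\alpha}$ level, but here it must be carried out at the weaker $C^\alpha\cap W^{1,q}$ level, and it is the divergence-form reformulation of \eqref{eq:plan-div} that makes such a perturbation estimate available and constitutes the novel analytic ingredient of the proof.
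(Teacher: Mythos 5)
Your overall skeleton (contradiction, point-picking, rescaling so that $r_h=1$, identification of the blow-up limit with flat $\IR^n$, construction of a harmonic chart on a ball of radius $>1$ to contradict the normalization) is the same as the paper's, and your idea of handling $\d^2 L$ by putting the equation in divergence form so that only $\pa_l L\in L^\infty$ appears is a legitimate alternative to the paper's device of integrating by parts against the intrinsic Green's function (the paper even remarks that an ``extrinsic'' kernel for $g^{ab}\pa_a\pa_b$ could be used). The gap is at the central analytic step, which your plan assumes away: you need \emph{smallness}, not boundedness, of $\|\pa g\|_{L^q}$ for the rescaled metrics, i.e. strong $W^{1,q}_{loc}$ convergence to the flat metric. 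A Meyers/Calder\'on--Zygmund estimate for the divergence-form operator controls $\|\pa h_i\|_{L^q}$ by the right-hand side, but the right-hand side contains the quadratic term $Q(\pa g,g)$, which under the hypotheses is only \emph{bounded} in $L^{q/2}$; it is not small, and it cannot be absorbed at unit scale because the elliptic estimate carries no small factor there. For the same reason your passage to the weak limit equation is not justified as written: weak $W^{1,q}$ convergence plus $C^\alpha$ convergence does not give convergence of the nonlinearity $Q(\pa h_i,h_i)$ (this is exactly the difficulty the paper flags in the introduction: bounded sets in $W^{1,q}$ are not compact in $W^{1,q'}$, and $C^\alpha_{loc}$ convergence does not imply $W^{1,q}$ convergence). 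Finally, the decisive last step fails without smallness: to verify \eqref{W1q bound of g} for the new chart on a ball of radius $r=5/4$ (or $R$) one must check $r^{1-n/q}\|\pa g\|_{L^q(B_r)}\leq 1/10$, and since $1-n/q>0$ this term gets \emph{harder} to control as $r$ grows; a uniform bound inherited from the old charts of radius $\sim 1$ gives nothing.

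The paper closes this gap with a two-stage argument that your proposal has no substitute for: first, a localized estimate (its Step 2) in which the Green's function bounds (including the mixed second derivative bound of Lemma \ref{mixted hessian estimate G} and a Calder\'on--Zygmund argument for the frozen-coefficient kernel) produce, on a ball of radius $\eta$, the inequality $\|\pa g_j\|_{L^s(B_\eta)}\leq C\eta^a\|\pa g_j\|_{L^s(B_{2\eta})}+C(r_j+\|g_j-Id\|_{C^\alpha})\vol(B_\eta)^{\frac1s-\frac{1-\alpha}{n}}$, where the quadratic term is tamed only through the small factor $\eta^a$; and second, a Whitney covering argument (Step 3) that sums these estimates over balls of controlled overlap to remove the mismatch between $B_\eta$ and $B_{2\eta}$ and conclude $\|\pa g_j\|_{L^s(B_{1/2})}\to 0$ for every $s$. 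Only with this strong convergence in hand can one pass to the limit equation, run the Dirichlet-problem construction of coordinates on $B_{3/2}$, and prove the $W^{2,s}$ smallness of the correction $w_k=x_k-y_k$ (Steps 4--5) needed to verify \eqref{W1q bound of g} at radius $5/4$. If you want to keep your divergence-form framework, you would still need an analogue of this small-scale absorption plus covering (or some other mechanism, e.g. an energy/compensated-compactness argument giving strong $W^{1,2}_{loc}$ convergence and then interpolation with the $L^q$ bound), and that missing mechanism is precisely the novel content of the paper's proof.
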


\begin{remark}  Under the condition of the theorem, since $q>2n>n$, one knows that $W^{1, q}$ space embeds into $C^\a$ for $\a=1- \frac{n}{q}$. So we know that the metric is $C^\a$ automatically.
\end{remark}



\begin{remark}
Also indicated in the proof of Theorem \ref{thm harmonic radius bound} is the continuity of the $W^{1,q}$ harmonic radius.
\end{remark}

The next theorem of the paper is

\begin{theorem}\label{codimension 4}
Suppose a sequence of pointed manifolds $(\M^n_j, d_j, p_j)$ satisfies that
\[
|Ric_{M_j}+\d^2 L_j|\leq (n-1),\ with\ |\d L_j|\leq 1,
\]
and
\[
\vol(B_{10}(x))\geq \rho,\ \forall x\in\M_j,
\]
where $L_j\in C^{\infty}(\M_j)$, and $\rho>0$ is a constant.

If $(\M_j, d_j, p_j)\xrightarrow{d_{GH}}(X,d,p)$, then the singular set $\mathcal{S}$ satisfies
\[
dim(\mathcal{S})\leq n-4.
\]
\end{theorem}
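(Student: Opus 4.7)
The plan is to follow the quantitative stratification scheme of Cheeger-Naber \cite{ChNa2}, adapted to the Bakry-\'Emery setting along the lines of Wang-Zhu \cite{WZ}, with Theorem \ref{thm harmonic radius bound} supplying the fundamental $\eps$-regularity input. First I would fix the quantitative singular strata $\mc{S}^k_{\eps,r} \subset B_1(p_j)$ consisting of points $x$ such that no ball $B_s(x)$ with $r \leq s \leq 1$ is $(k+1,\eps)$-symmetric in the weighted sense. The goal is to establish a Minkowski-type volume bound of the form $\vol(B_r(\mc{S}^k_{\eps,r}) \cap B_{1/2}(p_j)) \leq C(n,\rho,\eps) \, r^{n-k-\eps}$ for every $k \leq n-4$, which then transfers to the limit and yields $\dim \mc{S} \leq n-4$.

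The preparatory ingredients from the Bakry-\'Emery theory that I would invoke from \cite{WZ} are: the almost splitting theorem (so that a ball Gromov-Hausdorff close to $\IR^k \times Y$ supports $k$ almost-splitting functions), volume continuity under non-collapsing, and the identification of tangent cones with metric cones at almost every scale. The weighted Bochner identity $\frac{1}{2}\De_L |\d u|^2 = |\d^2 u|^2 + \langle \d \De_L u, \d u\rangle + (\Ric + \d^2 L)(\d u, \d u)$, together with $|\Ric + \d^2 L| \leq n-1$ and $|\d L|\leq 1$, replaces bounded $\Ric$ in the classical estimates. Here $\De_L = \De_g + \langle \d L, \d\cdot\rangle$ is the weighted Laplacian; the splitting functions are chosen $\De_L$-harmonic, and the usual integral Hessian bound $\int |\d^2 u|^2 \leq C\int |\d u|^2$ is obtained by integrating the Bochner identity against a cutoff.

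Next comes the \emph{Transformation Theorem}: for an almost splitting map $u = (u_1,\ldots,u_k)$ and $x\in B_1$, one studies the Gram matrix $A(x,s)_{ij} = \frac{1}{|B_s(x)|}\int_{B_s(x)} \langle \d u_i, \d u_j\rangle$ and seeks $T(x,s) \in GL_k$ such that $T(x,s)\circ u$ is again an almost splitting on $B_s(x)$. Following the linear algebra argument of Bamler \cite{Bam}, I would iterate a $\log$-scale integral estimate $\int \|\log A(x,2s) - \log A(x,s)\|^2 \frac{ds}{s} < \infty$ along dyadic scales to obtain controlled evolution of $A$ up to a sparse set of bad scales. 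The new step highlighted in the abstract --- showing that the diagonal entries of $A(x,s)$ stay bounded away from $0$ --- I would obtain by a Green's function argument for $\De_L$ on annular regions comparing $|\d u_i|^2$ at different radii, with Theorem \ref{thm harmonic radius bound} guaranteeing the harmonic coordinates and hence the elliptic estimates needed at each scale.

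The decisive step is to rule out the $(n-3)$-stratum. Given three almost-splitting $\De_L$-harmonic functions $u_1, u_2, u_3$ on $B_2(p)$, I would construct a fourth almost-splitting direction $u_4$ by cutting off an appropriate $G_L$-Green's function centered outside the ball and using it to produce a $\De_L$-harmonic $b$ whose gradient is approximately orthogonal to $\d u_1,\d u_2,\d u_3$ and of approximately unit length on average. Combined with the Transformation Theorem and the weighted almost splitting theorem, this upgrades $B_2(p)$ to $(4,\eps)$-symmetric, contradicting the assumption that $p$ lies in $\mc{S}^{n-3}_{\eps,r}$; the covering argument of \cite{ChNa2} then delivers the Minkowski estimate. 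I expect the main obstacle to be the reduced regularity of the metric: since Theorem \ref{thm harmonic radius bound} only gives $C^{\a} \cap W^{1,q}$ metrics rather than $C^{1,\a} \cap W^{2,p}$, the $L^p$ elliptic theory for $\De_g$ must be replaced throughout by a weighted $L^q$ theory for $\De_L$, and the sharp constants guaranteeing non-degeneracy of the diagonal of $A(x,s)$ must be carefully tracked under this weaker regularity. Once this weighted elliptic package is in place, the Cheeger-Naber dimension-reduction scheme transplants to the Bakry-\'Emery setting.
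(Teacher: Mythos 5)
Your proposal misidentifies the decisive step, and the mechanism you propose for it would fail. In both \cite{ChNa2} and this paper, the heart of the codimension 4 theorem is ruling out the codimension-\emph{two} cone $\mathbb{R}^{n-2}\times C(S^1_\beta)$, $\beta<2\pi$, as a limit, and the tool that makes this possible is the Slicing Theorem, which is absent from your outline: given an $L$-drifted $\delta$-splitting map $u:B_2(p)\to\mathbb{R}^{n-2}$, one must find a set $G_\epsilon$ of values of full measure up to $\epsilon$ such that at \emph{every} point of the fiber $u^{-1}(s)$, $s\in G_\epsilon$, and at \emph{every} scale $r\le 1/4$, some lower triangular matrix $A$ makes $A\circ u$ an $\epsilon$-splitting. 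The Transformation Theorem alone only works above the singular scale $s^\delta_{L,x}$, and the whole point of the slicing argument (via the measures $\mu^l_L$, the doubling and measure-comparison lemmas --- this is where the lower bound $a_{ii}\ge c_0$ is actually used --- and the higher-order estimate on $\fint^L\left|\Delta_L|w^l|\right|$) is to show the image of the bad set where $s^\delta_{L,x}>0$ has small measure in $\mathbb{R}^{n-2}$. Once a good fiber is chosen, the paper blows up at the point of minimal $W^{1,q}$ harmonic radius on the fiber, uses Theorem \ref{thm harmonic radius bound}/Theorem \ref{e regularity} and the $W^{1,s}$ compactness of Theorem \ref{thm compactness}, and reads off from the metric equation in harmonic coordinates that the blow-up limit is a weak solution with vanishing right-hand side, hence smooth, Ricci flat, flat (the cross-section is $2$-dimensional), hence $\mathbb{R}^n$; this contradicts the normalization $r_h=1$. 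None of this structure appears in your sketch, and without the slicing selection the contradiction argument cannot be launched.

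Your proposed decisive step --- producing a fourth almost-splitting direction from three given ones by cutting off a Green's function, thereby upgrading to $(4,\epsilon)$-symmetry --- cannot work as stated: the model $\mathbb{R}^{n-2}\times C(S^1_\beta)$, and hence any manifold Gromov--Hausdorff close to it, carries $n-2\ge 4$ independent almost-splitting functions near the vertex line yet is singular there, so no construction can conjure an additional independent splitting direction from the existing ones; if it could, $(n-3,\epsilon)$-symmetry would self-improve and the codimension 4 theorem would be immediate, which is exactly the difficulty the slicing/blow-up argument is designed to overcome. In the paper the codimension-3 cones are excluded afterwards by a comparatively soft argument following \cite{ChNa2}, using the $\mathbb{R}^{n-3}$ splitting map (via Lemma \ref{lem coordinate approximation} and the equivalence Lemma \ref{equiv. splitting maps}) together with the Poisson approximation of the squared distance and $C^{\alpha}$ elliptic estimates --- not a new splitting function. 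Two further remarks: the Hausdorff dimension bound of Theorem \ref{codimension 4} follows from the qualitative exclusion of these cones plus standard dimension reduction, so the Minkowski-type bound you aim for is a stronger statement than needed and still requires the same missing inputs; and your Gram-matrix/log-scale description of the Transformation Theorem differs from the paper's induction on $k$ with lower triangular matrices, Green's function estimates for $\Delta_L$, and Bamler's orthogonalization, though that discrepancy is minor compared with the missing Slicing Theorem and the unworkable fourth-direction construction.
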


\begin{remark} The constants $n-1$ and $1$ in the assumptions on Bakry-\'Emery Ricci curvature in the above theorems are chosen for convenience. They can be replaced by any positive constants.
\end{remark}



The rest of the paper is organized as follows. In section 2, we prove Theorem \ref{thm harmonic radius bound}. The proof follows the strategy in \cite{And} where a method of contradiction is used following a blow up procedure. Since our Ricci condition is weaker, a deeper analysis of the metric equation within harmonic radius is needed. These include mixed second derivative bound of Greens function and a careful covering argument.
The main issue is to prove $W^{1, q}$ convergence of the metrics in a blow up process.  One technical difficulty is that bounded sets in $W^{1, q}$ may not be compact in $W^{1, q'}$ for $q'<q$, which is different from the fact that bounded sets in $C^\a$ is compact in $C^{\a'}$ if $\a'<\a$.  An example is the sequence $f_k = \frac{1}{k} \sin (kx),  x \in [0, 2 \pi]$ in $W^{1, 2}([0, 2\pi])$. During the blow up process, it is easy to prove $C^\a_{loc}$ convergence of the metrics. However, $C^\a_{loc}$ convergence does not imply $W^{1, q}$ convergence. So we can not immediately deduce that the non-linear term $Q$ in (\ref{eqric-metric}) converges. In the classical case, one can prove $C^{1, \a}_{loc}$ convergence quickly and this already implies the convergence of the nonlinear term.

Theorem \ref{codimension 4} will be proved in Section 3. The proof is based on the techniques in \cite{ChNa2} and \cite{WZ}.
  A new ingradient is to show that the diagonal entries of the matrices in the Transformation Theorem are bounded away from 0. Some  other short cuts to the proof are also found. Together these seem to simplify the proof of the Transformation Theorem in \cite{ChNa2}, even in the original case.

\section{Bounds on harmonic radius and $\e$-regularity}

Let us start with a simple observation.
Recall the condition that
\be\label{basic assumption}
|Ric+\d^2L|\leq (n-1),\ |\d L|\leq 1.
\ee
 The theorem and proof are local in space. After  blowing up of metrics, this condition on Ricci curvature is always satisfied and actually becomes better.

Let $G(x,y)$ be the Green's function on $\M$. It is standard (using gradient bound on heat kernel etc) to show that
\be\label{Green's function estimate1}
|G(x,y)|\leq \frac{C}{d(x,y)^{n-2}},\ and\ |\d_yG(x,y)|\leq \frac{C}{d(x,y)^{n-1}}, \qquad d(x, y) \le 100.
\ee
Here and for the rest of this section, we use $C$ to denote constants depending only on the dimension $n$ and the parameters in the assumptions.

Suppose that $\Phi:U\rightarrow \mathbb{R}^n$ is a local coordinate chart on some open subset $U$ of $\M$. Denote by $\pa_{y_j}G(x,y)$ the jth component of $\d_y G(x,y)$. Then it is a harmonic function off the diagonal as a function of $x$. Thus, by the gradient estimate under Bakry-\'Emery Ricci condition, it follows that
\begin{lemma}\label{mixted hessian estimate G}
Under assumption \eqref{basic assumption}, it holds
\be\label{Green's function estimate2}
|\d_{x}\pa_{y_j}G(x,y)|\leq \frac{C}{d(x,y)^n}, \qquad \text{if} \quad d(x, y) \le 100, \quad B(y, 100) \subset U;
\ee
where $\d_{x}\pa_{y_j}G(x,y)$ is the gradient of $\pa_{y_j}G(x,y)$ as a function of $x$.
\end{lemma}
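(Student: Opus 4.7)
The plan is to fix $y$ and treat $h_j(x):=\pa_{y_j}G(x,y)$ as a harmonic function of $x$ on $\M\setminus\{y\}$, and then to invoke the Cheng--Yau type gradient estimate available under a lower bound on the Bakry--\'Emery Ricci curvature (see \cite{Q}, \cite{xLi}, \cite{WW}) on a ball of radius comparable to $d(x,y)$. The bound $|h_j|\leq C/d(x,y)^{n-1}$ from \eqref{Green's function estimate1} will then convert the gradient estimate into exactly the claimed $C/d(x,y)^n$ bound.

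First I would verify that $\Delta_x h_j=0$ on the complement of $\{y\}$. Since $G(\cdot,y)$ is smooth and harmonic off the diagonal, and since $y\mapsto G(x,y)$ is smooth in the local coordinates $\Phi$ on $U$ for any fixed $x\neq y$, differentiation in $y_j$ commutes with $\Delta_x$ in a neighborhood of any such $x$. Concretely, $\Delta_x h_j(x)=\pa_{y_j}\Delta_x G(x,y)=0$. The interchange is justified either by pairing against a test function supported away from the diagonal, or directly from the heat kernel representation of $G$ together with the derivative bounds already in hand.

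Next, fix $x\neq y$ with $d(x,y)\leq 100$ and set $r=d(x,y)/2$. For any $x'\in B(x,r)$ one has $d(x',y)\geq r$, so by \eqref{Green's function estimate1},
\be
\sup_{B(x,r)}|h_j|\leq \frac{C}{r^{n-1}}.
\ee
The assumption \eqref{basic assumption} gives a two sided bound on the Bakry--\'Emery Ricci tensor with $|\d L|\leq 1$, so the Cheng--Yau style gradient estimate in \cite{Q}, \cite{xLi}, \cite{WW} applies to harmonic functions on $B(x,r)$ and yields
\be
|\d_x h_j(x)|\leq \frac{C}{r}\sup_{B(x,r)}|h_j|\leq \frac{C}{r^n}=\frac{C}{d(x,y)^n},
\ee
which is the asserted inequality.

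The only minor subtlety is that the gradient estimates in the cited references are usually stated for \emph{positive} harmonic functions, whereas $h_j$ is signed. This is bypassed by applying the estimate to $h_j+\sup_{B(x,r)}|h_j|$, which is nonnegative and harmonic on $B(x,r)$, and whose supremum on $B(x,r)$ is comparable to $\sup_{B(x,r)}|h_j|$. I do not anticipate any real obstacle; the lemma is essentially a direct consequence of applying a standard Bakry--\'Emery gradient estimate to the differentiated Green's function on a ball of radius $\sim d(x,y)$.
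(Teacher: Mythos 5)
Your proposal is correct and follows essentially the same route as the paper: the authors also observe that $\pa_{y_j}G(x,y)$ is harmonic in $x$ off the diagonal and then apply the gradient estimate available under the Bakry--\'Emery condition (which, as they remark, is exactly why only the mixed derivative can be controlled), combined with the bound $|\d_y G|\leq C/d(x,y)^{n-1}$ on a ball of radius comparable to $d(x,y)$. Your handling of the sign issue by shifting $h_j$ to a nonnegative harmonic function is a standard and valid way to make the Cheng--Yau form of the estimate applicable.
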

Here, gradient estimate works for \eqref{Green's function estimate2} because only mixed derivative is involved in the proof, which only requires the control of the quantities in \eqref{basic assumption} but not the whole curvature tensor.

  As a consequence of the Green's function estimates \eqref{Green's function estimate1} and \eqref{Green's function estimate2}, one can show

\begin{lemma}\label{lemma C alpha Green's function}
Assume that \eqref{basic assumption} holds. Then for any $r\leq 1$, $0<\a \le 1$, and $y, x_1,x_2\in B_{2r}(p)$ we have
\be\label{C alpha Green's function}
\al
|G(x_1,y)-G(x_2,y)|&\leq \frac{Cd(x_1,x_2)^{\a}}{\min(d(x_1,y)^{n-2+\a}, d(x_2,y)^{n-2+\a})};\\
|\pa_{y_j} G(x_1,y)-\pa_{y_j}G(x_2,y)| &\leq \frac{Cd(x_1,x_2)^{\a}}{\min(d(x_1,y)^{n-1+\a}, d(x_2,y)^{n-1+\a})}
\eal
\ee if $B(y, 100) \subset U$.
\end{lemma}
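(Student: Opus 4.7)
The plan is to prove this as a standard Hölder interpolation between the pointwise size estimates and the gradient estimates already at our disposal. For the first inequality I need to know not only $|\nabla_y G(x,y)| \le C/d(x,y)^{n-1}$ from \eqref{Green's function estimate1}, but also the analogous estimate in the $x$ variable, i.e.\ $|\nabla_x G(x,y)| \le C/d(x,y)^{n-1}$. This I would derive by applying the Cheng--Yau style gradient estimate under the Bakry--\'Emery Ricci condition \eqref{basic assumption} to the function $G(\cdot,y)$, which is harmonic on $\M\setminus\{y\}$, on a ball $B(x, d(x,y)/2)$, and combining with the sup bound from \eqref{Green's function estimate1}.

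Once I have the pointwise estimate and the gradient estimate in $x$, I would split into two cases. \textbf{Case 1:} $d(x_1,x_2)\le \tfrac{1}{2}\min\bigl(d(x_1,y),d(x_2,y)\bigr)$. Here the minimizing geodesic $\gamma$ from $x_1$ to $x_2$ stays at distance at least $\tfrac{1}{2}\min(d(x_i,y))$ from $y$, so integrating $|\nabla_x G|$ along $\gamma$ gives
\[
|G(x_1,y)-G(x_2,y)|\le \frac{C\,d(x_1,x_2)}{\min(d(x_1,y),d(x_2,y))^{n-1}}.
\]
Writing $d(x_1,x_2)=d(x_1,x_2)^{\alpha}\cdot d(x_1,x_2)^{1-\alpha}$ and bounding the second factor by $\min(d(x_i,y))^{1-\alpha}$ via the case hypothesis then yields the claimed inequality. \textbf{Case 2:} $d(x_1,x_2)> \tfrac{1}{2}\min(d(x_1,y),d(x_2,y))$. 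Here I use the triangle inequality and the pointwise bound from \eqref{Green's function estimate1} directly, and then multiply and divide by $\min(d(x_i,y))^{\alpha}$, which is controlled by $(2d(x_1,x_2))^{\alpha}$ by the case hypothesis, to absorb the missing factor.

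For the second inequality I would repeat exactly the same case analysis, now using the pointwise bound on $|\partial_{y_j}G(x,y)|\le C/d(x,y)^{n-1}$ from \eqref{Green's function estimate1} together with the mixed derivative estimate $|\nabla_x \partial_{y_j}G(x,y)|\le C/d(x,y)^n$ from Lemma \ref{mixted hessian estimate G} in place of the sup and gradient bounds. The only technical point to verify throughout is that the geodesic from $x_1$ to $x_2$ and the balls on which the gradient estimate is applied stay inside the set where \eqref{Green's function estimate1} and \eqref{Green's function estimate2} hold; since $y,x_1,x_2\in B_{2r}(p)$ with $r\le 1$ and the relevant assumptions $d(x,y)\le 100$ and $B(y,100)\subset U$ are built into the hypotheses of those estimates, this is immediate. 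I do not anticipate a genuine obstacle; the only delicate point is the bookkeeping in Case 1, where one must keep the distance to $y$ along the interpolating geodesic comparable to $\min(d(x_1,y),d(x_2,y))$ in order for the integrated gradient bound to come out with the correct exponent.
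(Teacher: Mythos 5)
Your proposal is correct and follows essentially the same argument as the paper: the same two-case split (near/far relative to $d(x_1,x_2)$ versus $\min d(x_i,y)$), with the pointwise bounds \eqref{Green's function estimate1} absorbed via the case hypothesis in the near regime, and the mean-value/geodesic integration of the mixed derivative bound from Lemma \ref{mixted hessian estimate G} plus the H\"older interpolation $d(x_1,x_2)=d(x_1,x_2)^{\a}d(x_1,x_2)^{1-\a}$ in the far regime. The only cosmetic difference is that you spell out the $x$-gradient bound for $G$ (via the Bakry--\'Emery gradient estimate) needed for the first inequality, which the paper leaves implicit by saying that case is ``similar but easier.''
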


\proof We only prove the second estimate in \eqref{C alpha Green's function}. The proof of the first one is similar but easier.

If $d(x_1,y)\leq 2d(x_1,x_2)$, then \eqref{Green's function estimate1} implies that
\[
|\pa_{y_j} G(x_1,y)|\leq \frac{C}{d(x_1,y)^{n-1}}\leq \frac{Cd(x_1,y)^{\a}}{d(x_1,y)^{n-1+\a}}\leq \frac{Cd(x_1,x_2)^{\a}}{d(x_1,y)^{n-1+\a}},
\]
and
\[
\al
|\pa_{y_j} G(x_2,y)|\leq \frac{Cd(x_2,y)^{\a}}{d(x_2,y)^{n-1+\a}}\leq \frac{C[d(x_2,x_1)+d(x_1,y)]^{\a}}{d(x_2,y)^{n-1+\a}}\leq \frac{Cd(x_1,x_2)^{\a}}{d(x_2,y)^{n-1+\a}}.
\eal
\]
The estimates are similar when $d(x_2,y)\leq 2d(x_1,x_2)$.

Finally, if $\min(d(x_1,y),d(x_2,y))>2d(x_1,x_2)$, then by \eqref{Green's function estimate2}, one gets
\[
|\pa_{y_j} G(x_1,y)-\pa_{y_j}G(x_2,y)|\leq |\d_x\pa_{y_j} G|(x^*,y)d(x_1,x_2)\leq
\frac{Cd(x_1,x_2)}{d(x^*,y)^{n}}.
\]
Notice that in this case
\[
d(x^*,y)\geq d(x_i,y)-d(x^*,x_i)\geq d(x_i,y)-d(x_1,x_2) \geq \frac{1}{2}d(x_i,y)\geq d(x_1,x_2).
\]
Thus,
\[
|\pa_{y_j} G(x_1,y)-\pa_{y_j}G(x_2,y)|\leq \frac{Cd(x_1,x_2)^{\a}}{\min(d(x_1,y), d(x_2,y))^{n-1+\a}}.
\]
\qed

\proof[Proof of Theorem \ref{thm harmonic radius bound}:]

{\it Proof of $(a)$:} We will use the blow up argument in \cite{And} together with an extensive use of the "intrinsic" Green's function on the manifold $\M$.
Let us remark here that alternatively, one may also use the "extrinsic" Green's function, namely the Green's function of the operator $g^{ab} \frac{\pa^2}{\pa v_a \pa v_b}$ in the Euclidean space, after extending $g^{ab}$ suitably to the whole space.

Notice that by rescaling the metric $g$ by a factor $\delta^{-4}$, it amounts to prove the following statement. If $|Ric+\d^2 L|\leq (n-1)\delta^4$ with $|\d L|\leq \delta^2$, and
\be\label{volume condition2}
\vol(B_{\delta^{-1}}(p))\geq (1-\delta)\vol(B_{\delta^{-1}}(0^n)),
\ee
then the $W^{1,q}$ harmonic radius $r_h(x)$ satisfies
\[
r_h(x)\geq \theta d(x,\pa B_{1}(p)),
\]
for all $x\in B_{1}(p)$.

Under condition \eqref{volume condition2}, by Theorem 1.2 in \cite{WW} (see also Corollary 2.5 in \cite{ZZ}), we have for any $x\in B_{1}(p)$,
\[
\vol(B_{\delta^{-1}}(x))\geq \vol(B_{\delta^{-1}-1}(p))\geq e^{-2n\delta}(1-\delta)\vol(B_{\delta^{-1}-1}(0^n))= e^{-2n\delta}(1-\delta)^{n+1}\vol(B_{\delta^{-1}}(0^n)).
\]
Thus, when $\delta$ is small, it implies that
\be\label{volume nbhd}
\vol(B_{\delta^{-1}}(x))\geq [1-(3n+1)\delta]\vol(B_{\delta^{-1}}(0^n)).
\ee
Then applying the volume comparison theorem (Theorem 1.2 in \cite{WW}) one more time yields
\be\label{volume1}
\frac{\vol(B_{1}(x))}{\vol(B_{1}(0^n))}\geq e^{-2n\delta}\frac{\vol(B_{\delta^{-1}}(x))}{\vol(B_{\delta^{-1}}(0^n))}\geq [1-(3n+1)\delta]e^{-2n\delta}.
\ee
Again, if $\delta$ is small enough, one gets
\be\label{volume2}
\vol(B_{1}(x))\geq [1-(5n+1)\delta]\vol(B_{1}(0^n)).
\ee


Now we argue by contradiction to show that the theorem holds for some small $\delta$. Suppose that the theorem is not true. Then for any $\delta_j\rightarrow 0$, there is a sequence of manifolds $(\M_j, g_j)$, points $p_j\in\M_j$, and smooth functions $L_j$ such that
\[
|Ric_{\M_j}+\d^2L_j|_{g_j}\leq (n-1)\delta_j^{4},\quad |\d L_j|_{g_j}\leq \delta_j^2,
\]
and
\[
\vol(B_{\delta_j^{-1}}(p_j))\geq (1-\delta_j)\vol(B_{\delta_j^{-1}}(0^n)),
\]
but for some $z_j\in B_{1}(p_j)$, we have
\be\label{rhzj}
\frac{r_h(z_j)}{d(z_j,\pa B_{1}(p_j))}\rightarrow 0.
\ee
Without loss of generality, we may assume that $z_j$ is chosen so that the ratio $\frac{r_h(z)}{ d(z,\pa B_{1}(p_j))}$ reaches the minimum in $\overline{B_{1}(p_j)}$. It them implies that in the ball $B_{\frac{1}{2}d(z_j,\pa B_{1}(p_j))}(z_j)$, we have
\be\label{harmonic radius comparison}
r_h(z)\geq \frac{1}{2}r_h(z_j).
\ee

In the following, we  finish the proof in 5 steps.\\

\hspace{-.5cm}{\it Step 1: Blow-up and $C^{\a}$ convergence}

Denote by $r_j=r_h(z_j)$. Note that \eqref{rhzj} implies that $r_j\rightarrow 0$. Let us rescale the metric $g_j$ by the factor $r_j^{-2}$, i.e., $g_j\rightarrow r_j^{-2}g_j$. In the following, unless otherwise specified, all norms are taken with respect to the rescaled metric $r_j^{-2}g_j$. Hence, the manifold $(\M_j, g_j)$ satisfies
\be\label{rescaled BE Ricci}
|Ric_{\M_j}+\d^2L_j|\leq (n-1)r_j^2\delta_j^4,\quad |\d L_j|\leq r_j\delta_j^2,
\ee
and
\be\label{rescaled noncollapsing}
\vol(B_{(\delta_j r_j)^{-1}}(p_j))\geq (1-\delta_j)\vol(B_{(\delta_j r_j)^{-1}}(0^n)).
\ee
Also, from \eqref{rhzj}, one has
\be
d(z_j,\pa B_{r_j^{-1}}(p_j))\rightarrow\infty.
\ee
Gromov's precompact theorem implies that by passing to a subsequence, we have

\hspace{-.5cm}$(B_{d(z_j,\pa B_{r_j^{-1}}(p_j))}(z_j), d_j, z_j) \xrightarrow{d_{GH}}(\M_{\infty}, d_{\infty}, z_{\infty})$, where $d_j$ is the distance function related to the Riemannian metric $g_j$.
Then Corollary 4.8 and Remark 4.9 in \cite{WZ} and \eqref{volume2} conclude that $(\M_{\infty}, d_{\infty})=(\mathbb{R}^n, |\cdot|)$, where $|\cdot|$ denotes the standard Euclidean distance. Without loss of generality, we may assume that $z_{\infty}=0^n$.

On the other hand, by \eqref{harmonic radius comparison}, there is an open cover $\{B_{1/2}(z_{j_k})\}$ of $B_{\frac{1}{2}d(z_j,\pa B_{r_j^{-1}}(p_j))}(z_j)$ such that $B_{1/4}(z_{j_k})$ are mutually disjoint and there is a $W^{1,q}$ harmonic coordinate chart on all the balls. Since $q>2n$, by Sobolev embedding and the virtue of Lemma 2.1 in \cite{And} (See also \cite{Pe}), it actually holds that $(B_{d(z_j,\pa B_{r_j^{-1}}(p_j))}(z_j), g_j, z_j)\xrightarrow{C^{\a'}}(\mathbb{R}^n, Id_{ij}, 0^n)$ for any $\a'<1-\frac{n}{q}$ in Cheeger-Gromov sense. Moreover, we may assume that the index set $\{k\}$ is the same for all $B_{d(z_j,\pa B_{r_j^{-1}}(p_j))}(z_j)$, and $B_{1/2}(z_{j_k})\xrightarrow{d_{GH}}B_{1/2}(z_{\infty,k})$ for fixed $k$, as $j\rightarrow\infty$.

Next, we want to show that the convergence actually takes place in $C^{\a}$ and $W^{1,s}$ topology for any $\a\in(0,1)$ and $1<s<\infty$.

The estimates below work for all $\M_j$'s, so the subscript $j$ is dropped for convenience. In the remaining context of this step, denote by $\pa_{v_a}$ the partial derivative operator $\frac{\pa}{\pa v_a}$. In harmonic coordinates, the components of the Ricci curvature tensor can be expressed as
\[
-2R_{kl}=g^{ab}\pa_{v_a}\pa_{v_b}g_{kl} + Q(\pa g,g),
\]
where $Q(\pa g, g)$ is a quantity quadratic in the components of $\pa g$. The above equation may be viewed as a semi-linear elliptic equation of $g_{kl}$, namely,
\be\label{PDE g}
g^{ab}\pa_{ v_a}\pa_{v_b} g_{kl} + Q(\pa g,g)=-2(R_{kl}+\d_k\d_l L) + 2\d_k\d_l L.
\ee
Since $|\pa g|\in L^q$, we have $Q(\pa g,g)$ is uniformly bounded in $L^{\frac{q}{2}}$ with $\frac{q}{2}>n$. In addition, we know that $g^{ab}\in C^{\a'}$, $|Ric+\d^2 L|\in L^{\infty}$ and $|\d L|\in L^{\infty}$. Hence, the $W^{1,s}$ norm of $g_{kl}$ is uniformly bounded for all $1<s<\infty$, and by Sobolev embedding and the Arzela-Ascoli lemma, we see that $(B_{d(z_j,\pa B_{r_j^{-1}}(p_j))}(z_j), g_{j}, z_j)\xrightarrow{C^\a} (\mathrm{R}^n, Id, 0^n)$ for any $\a\in(0,1)$.  Here $g_j=(g_{kl})_j$ where $j$ is the index in the sequence of metrics.\\

\hspace{-.5cm}{\it Step 2: control of the $W^{1,s}$ norm on small scales}

To show the $W^{1,s}$ convergence of $g_j$, for any $z\in B_{1/2}(z_{j_k})$, let $\eta>0$ be an arbitrary constant such that $B_{2\eta}(z)\subseteq B_{1/2}(z_{j_k})$. Choose a cut-off function $\phi$ supported in $B_{2\eta}(z)$ such that $\phi=1$ in $B_{3\eta/2}(z)$ and $|\Delta \phi|+|\d \phi|^2\leq C/\eta^2$. For the existence, see e.g. Lemma 1.5 in \cite{WZ}.
Also, for simplicity of presentation we temporarily drop the index $j$ in the metrics, unless there is confusion.  Then for
\be
\lab{defhkl}
h_{kl}= g_{kl}-g_{kl}(z),
\ee from \eqref{PDE g}, we have
\[
\al
-\frac{1}{2}\Delta (\phi h_{kl})
&= \phi\left(R_{kl}+\d_k\d_l L\right) - \phi\d_k\d_l L-\phi Q(\pa g, g) + 2g^{ab}\pa_{v_a} h_{kl}\pa_{v_b}\phi +  g^{ab}h_{kl}\pa_{v_a}\pa_{v_b}\phi\\
&: = I_1+I_2+I_3+I_4+I_5.
\eal
\]

It follows from Green's formula that
\[
\phi h_{kl}(x)= 2\int_{\M} G(x,y)(I_1+\cdots+I_5)(y)dy,
\]
and hence
\be\label{Green's formula partial g}
\pa_{v_m} g_{kl}(x)=\pa_{v_m} (\phi h_{kl})(x) = 2\int_{\M} \pa_{v_m(x)} G(x,y)(I_1+\cdots+I_5)(y)dy
\ee
in $B_{\eta}(z)$.

For any $q<s<\infty$, let $s'=\frac{s}{s-1}$ and $\psi\in C_0^{\infty}(B_{\eta}(z))$. In the following the $L^s$ and $L^t$ norms are taken over $B_{2\eta}(z)$.
From \eqref{Green's formula partial g}, one has
\be\label{integral Green's formula partial g}
\int_{B_{\eta}(z)} \pa_{v_m} g_{kl}(x)\psi(x) dx = 2\int_{B_{\eta}(z)}\left(\int_{\M} \pa_{v_m(x)}G(x,y)(I_1+\cdots+I_5)(y)dy\right)\psi(x)dx.
\ee
Firstly, by \eqref{Green's function estimate1} and \eqref{rescaled BE Ricci}, we have
\be\label{term1}
\al
2\int_{B_{\eta}(z)}\left(\int_{\M} \pa_{v_m(x)}G(x,y)I_1dy\right)\psi(x)dx\leq & Cr_j^2\int_{B_{\eta}(z)}\left(\int_{B_{2\eta}(z)}\frac{1}{d^{n-1}(x,y)}dy\right)\psi(x)dx\\
\leq & C\eta r_j^2 \vol(B_{\eta}(z))^{1/s}\|\psi\|_{L^{s'}(B_{\eta}(z))}.
\eal
\ee

Next, by writing $h_{kl}=(g_{kl}-Id_{kl})-(g_{kl}-Id_{kl})(z)$ and using the $C^{\a}$ boundedness of $|g-Id|$, we have for any $\a\in (1-\frac{n}{s}, 1)$ that
\be\label{term2}
\al
&2\int_{B_{\eta}(z)}\left(\int_{\M} \pa_{v_m(x)}G(x,y)I_5dy\right)\psi(x)dx\\
\leq & \frac{C}{\eta^2}\|g-Id\|_{C^{\a}}\int_{B_{\eta}(z)}\left(\int_{B_{2\eta}(z)}d(y,z)^{\a}\frac{1}{d(x,y)^{n-1}}dy\right)\psi(x)dx\\
\leq & \frac{C\|g-Id\|_{C^{\a}}}{\eta^{1-\a}}\vol(B_{\eta}(z))^{1/s}\|\psi\|_{L^{s'}(B_{\eta}(z))}\\
\leq & C\|g-Id\|_{C^{\a}}\vol(B_{\eta}(z))^{\frac{1}{s}-\frac{1-\a}{n}}\|\psi\|_{L^{s'}(B_{\eta}(z))}.
\eal
\ee

For $I_4$, using integration by parts yields
\[
\al
&2\int_{B_{\eta}(z)}\left(\int_{\M}\pa_{v_m(x)}G(x,y)I_4dy\right)\psi(x)dy dx\\
=&\underbrace{\int_{B_{\eta}(z)} \int_{\M} \pa_{v_m(x)}\pa_{v_a(y)}G(x,y)g^{ab}h_{kl}\pa_{v_b} \phi\,\psi(x) dydx}_{(1)} + \underbrace{\int_{B_{\eta}(z)} \int_{\M} \pa_{v_m(x)}G(x,y)\pa_{v_a} g^{ab}h_{kl}\pa_{v_b} \phi\, \psi(x) dydx}_{(2)} \\
&\qquad\qquad\qquad + \underbrace{\int_{B_{\eta}(z)} \int_{\M} \pa_{v_m(x)}G(x,y)g^{ab}h_{kl}\pa_{v_a}\pa_{v_b}\phi\, \psi(x) dy dx}_{(3)}\\
:=&(1)+(2)+(3).
\eal
\]

From \eqref{term2}, we have
\be\label{(3)}
(3)\leq C\|g-Id\|_{C^{\a}}\vol(B_{\eta}(z))^{\frac{1}{s}-\frac{1-\a}{n}}\|\psi\|_{L^{s'}(B_{\eta}(z))}.
\ee
Similarly, by Lemma \ref{mixted hessian estimate G}, we get
\be\label{(1)}
\al
(1)\leq& \frac{C\|g-Id\|_{C^{\a}}}{\eta}\int_{B_{\eta}(z)}\bigg(\int_{B_{2\eta}(z)\setminus B_{3\eta/2}(z)}\frac{d(y,z)^{\a}}{d(x,y)^n}dy\bigg)\psi(x)dx\\
\leq & C\|g-Id\|_{C^{\a}}\eta^{\a-1}\int_{B_{\eta}(z)}\psi(x)dx\\
\leq & C\|g-Id\|_{C^{\a}}\eta^{\a-1+\frac{n}{s}}\|\psi\|_{L^{s'}(B_{\eta}(z))}\\
\leq &C\|g-Id\|_{C^{\a}}\vol(B_{\eta}(z))^{\frac{1}{s}-\frac{1-\a}{n}}\|\psi\|_{L^{s'}(B_{\eta}(z))}.
\eal
\ee
Since $g^{ab}\in W^{1,q}$, it follows that
\be\label{(2)}
\al
(2)\leq & \frac{C\|g-Id\|_{C^{\a}}}{\eta}\int_{B_{\eta}(z)}\bigg(\int_{B_{2\eta}(z)\setminus B_{3\eta/2}(z)}\left|\pa_{v_a} g^{ab}\right|\frac{d(y,z)^{\a}}{d(x,y)^{n-1}}dy\bigg)\psi(x)dx\\
\leq& C\|g-Id\|_{C^{\a}}\eta^{\a-n}\int_{B_{\eta}(z)}\bigg(\int_{B_{2\eta}(z)}\left|\pa_{v_a} g^{ab}\right|dy\bigg)\psi(x)dx\\
\leq& C\|\pa g\|_{L^q}\|g-Id\|_{C^{\a}} \eta^{\a-n+\frac{n(q-1)}{q}}\int_{B_{\eta}(z)}\psi(x)dx\\
\leq& C\|g-Id\|_{C^{\a}} \eta^{\a-\frac{n}{q}+\frac{n}{s}}\|\psi\|_{L^{s'}(B_{\eta}(z))}\\
\leq& C\|g-Id\|_{C^{\a}}\vol(B_{\eta}(z))^{\frac{1}{s}-\frac{1-\a}{n}}\|\psi\|_{L^{s'}(B_{\eta}(z))}.
\eal
\ee Here we have used $q>2n$.
Thus, putting \eqref{(3)}, \eqref{(1)}, and \eqref{(2)} together, one has
\be\label{term3}
2\int_{B_{\eta}(z)}\left(\int_{\M}\pa_{v_m(x)}G(x,y)I_4dy\right)\psi(x)dx\leq C\|g-Id\|_{C^{\a}}\vol(B_{\eta}(z))^{\frac{1}{s}-\frac{1-\a}{n}}\|\psi\|_{L^{s'}(B_{\eta}(z))}.
\ee

Moreover, since $Q(\pa g,g)\in L^{q/2}$, applying H\"older inequality followed by Young's inequality imply that
\[
\al
&2\int_{B_{\eta}(z)}\left(\int_{\M} \pa_{v_m(x)}G(x,y)I_3dy\right)\psi(x)dx\\
\leq & \|Q(\pa g,g)\|_{L^{q/2}}\left[\int_{B_{2\eta}(z)}\left(\int_{B_{\eta}(z)}|\pa_{v_m(x)}G(x,y)|\psi(x)dx\right)^{\frac{q}{q-2}}dy\right]^{\frac{q-2}{q}}\\
\leq & C\|\pa g\|^2_{L^{q}} \sup_{y\in B_{2\eta}(z)}\|\pa_{v_m}G(\cdot,y)\|_{L^t}\|\psi\|_{L^{s'}(B_{\eta}(z))},
\eal
\]
where $\|\pa g\|_{L^s}$ means the sum of the $L^s$ norms of all the components of $\pa g$, and $t$ satisfies
\[
1+\frac{q-2}{q}=\frac{1}{t}+\frac{1}{s'},\ i.e.,\ t=\frac{sq}{s(q-2)+q}.
\]
Noticing that $q>2n$, it is easy to check that $(n-1)t<n$, and hence
\[
\|\pa_{v_m}G(\cdot,y)\|_{L^t(B_{2\eta}(z))}\leq C\left(\int_{B_{2\eta}(z)}\frac{1}{d(x,y)^{(n-1)t}}dx\right)^{1/t}\leq C\eta^{n/t-(n-1)}.
\]
Thus, we have
\be\label{term4}
\al
2\int_{B_{\eta}(z)}\left(\int_{\M} \pa_{v_m(x)}G(x,y)I_3dy\right)\psi(x)dx\leq&
C\eta^{\frac{n}{t}-(n-1)+\frac{n(s-q)}{sq}}\|\pa g\|_{L^s}\|\psi\|_{L^{s'}(B_{\eta}(z))}.
\eal
\ee

Finally,
\[
\al
&2\int_{B_{\eta}(z)}\left(\int_{\M} \pa_{v_m(x)}G(x,y)I_2 dy\right)\psi(x)dx\\
= & 2\int_{B_{\eta}(z)}\left(\int_{\M} \pa_{v_m(x)}G(x,y)\phi(y)[\pa_{v_k(y)}\pa_{v_l(y)}L - \Gamma_{kl}^n\pa_{v_n(y)}L] dy\right)\psi(x)dx\\
=& -2\int_{B_{\eta}(z)}\bigg(\int_{\M} \left[\pa_{v_m(x)}\pa_{v_k(y)}G(x,y)\phi(y)+\pa_{v_m(x)}G(x,y)\pa_{v_k(y)}\phi(y)\right]\pa_{v_l(y)}L\\
&\hspace{2cm} + \pa_{v_m(x)}G(x,y)\phi(y)\Gamma_{kl}^n\pa_{v_n(y)}L dy\bigg)\psi(x)dx
\eal
\]
For the second and third terms above, since $\Gamma_{kl}^n\in L^q$, $|\d \phi|\leq C/\eta$ and $|\d L|\leq Cr_j$, as in \eqref{term1} we get
\be\label{1st and 2nd}
\al
& -2\int_{B_{\eta}(z)}\left(\int_{\M} \pa_{v_m(x)}G(x,y)\pa_{v_k(y)}\phi(y)\pa_{v_l(y)}L+\pa_{v_m(x)}G(x,y)\phi(y)\Gamma_{kl}^n\pa_{v_n(y)}L dy\right)\psi(x)dx\\
& \leq  Cr_j\vol(B_{\eta}(z))^{1/s}\|\psi\|_{L^{s'}(B_{\eta}(z))}.
\eal
\ee
For the first term, using H\"older inequality gives
\be\label{3rd}
\al
&-2\int_{B_{\eta}(z)}\left(\int_{\M} \pa_{v_m(x)}\pa_{v_k(y)}G(x,y)\phi(y)\pa_{v_l(y)}L dy\right)\psi(x)dx\\
\leq & 2\left[\int_{B_{2\eta}(z)}\left|\int_{B_{\eta}(z)}\pa_{v_m(x)}\pa_{v_k(y)}G(x,y)\psi(x)dx\right|^{s'}dy\right]^{1/s'}\left(\int_{B_{2\eta}(z)}|\d L|^sdy\right)^{1/s}\\
\leq& Cr_j\vol(B_{\eta}(z))^{1/s}\|\psi\|_{L^{s'}(B_{\eta}(z))},
\eal
\ee
where in the last step we have used the fact that
\[
\left[\int_{B_{2\eta}(z)}\left|\int_{B_{\eta}(z)}\pa_{v_m(x)}\pa_{v_k(y)}G(x,y)\psi(x)dx\right|^{s'}dy\right]^{1/s'}\leq C\|\psi\|_{L^{s'}(B_{\eta}(z))}.
\]
This is because $G(x,y)$ is the kernel of the Laplace operator $g^{ab}\frac{\pa^2}{\pa v_a \pa v_b}$, and then we may rewrite equation $g^{ab}\frac{\pa^2 u}{\pa v_a \pa v_b}=\psi$ as $g^{ab}(z)\frac{\pa^2 u}{\pa v_a \pa v_b}= (g^{ab}(z)-g^{ab})\frac{\pa^2 u}{\pa v_a \pa v_b}+\psi$ and use the fact that $g^{ab}\in C^{\a}$ and $\pa_x\pa_y G_z(x,y)$ is a Caldr\'on-Zygmund kernel. Here $G_z(x,y)$ denotes the kernel of the operator $g^{ab}(z)\frac{\pa^2 }{\pa v_a \pa v_b}$.

Thus, combining \eqref{1st and 2nd} and \eqref{3rd}, we get
\be\label{term5}
\al
2\int_{B_{\eta}(z)}\left(\int_{\M} \pa_{v_m(x)}G(x,y)I_2 dy\right)\psi(x)dx\leq Cr_j\vol(B_{\eta}(z))^{1/s}\|\psi\|_{L^{s'}(B_{\eta}(z))}.
\eal
\ee

Putting \eqref{term1}, \eqref{term2}, \eqref{term3}, \eqref{term4} and \eqref{term5} together in \eqref{integral Green's formula partial g}, we obtain
\[
\al
&\int_{B_{\eta}(z)} \pa_{v_m} g_{kl}(x)\psi(x) dx\\
\leq& C\eta^a\|\pa g\|_{L^s}\|\psi\|_{L^{s'}(B_{\eta}(z))}+C(r_j+\|g-Id\|_{C^{\a}})\vol(B_{\eta}(z))^{\frac{1}{s}-\frac{1-\a}{n}}\|\psi\|_{L^{s'}(B_{\eta}(z))},
\eal
\]
where $0<a<1$ is a constant.

Therefore, by taking supremum over $\psi$ on the left hand side and summing up all indices, we have, after recalling that $g=g_j$ in the sequence of metrics, that
\be\label{Ls pa g}
\al
\|\pa g_j\|_{L^s(B_{\eta}(z))}\leq & C\eta^a\|\pa g_j\|_{L^s(B_{2\eta}(z))}+C(r_j+\|g_j-Id\|_{C^{\a}})\vol(B_{\eta}(z))^{\frac{1}{s}-\frac{1-\a}{n}}.
\eal
\ee

\hspace{-.5cm}{\it Step 3: covering argument and $W^{1,s}$ convergence}

Even though the second term on the right is approaching 0, estimate \eqref{Ls pa g} above cannot be applied directly to derive the $W^{1,s}$ convergence of the metrics due to the difference between the size of the balls centered at $z$. The idea is to make the sizes of the balls on both sides even. For this purpose, we take advantage of the Whitney covering, which allows us to cover a big ball with countable many small balls while none of the small balls will escape the big ball and the overlapping number can be uniformly controlled. Eventually, the $L^s$ norm of $\pa g_j$ on both sides will be on the same ball, and $\delta$ can be replaced by the largest diameter of the balls in the covering. Hence, by making $\delta$ small enough, one will get the $\|\pa g_j\|_{L^s}\rightarrow 0$ as desired.

We choose the Whitney covering $\mathcal{B}$ of $B_{1/2}(x_{j_k})$ as follows: for some $m_0$ chosen below, cover the ball $B_{\frac{1}{2}-\frac{1}{2^{m_0}}}(x_{j_k})$ with finitely many balls $B_{\frac{1}{2^{m_0+1}}}$ of fixed size, cover the sphere $\pa B_{\frac{1}{2}-\frac{1}{2^{m}}}(x_{j_k})$, $m\geq m_0$ with balls $B_{\frac{1}{2^{m+1}}}$ and cover the remaining region in the annulus $B_{\frac{1}{2}-\frac{1}{2^{m+1}}}(x_{j_k})\setminus B_{\frac{1}{2}-\frac{1}{2^{m}}}(x_{j_k})$ also by balls $B_{\frac{1}{2^{m+2}}}$, where $B_r$ denotes a ball with radius $r$. Hence, $B_{1/2}(x_{j_k})$ is the union of all the balls in the covering. In addition, we may require all the balls with half of the radius to be disjoint.

Denote the number of balls with the radius $\frac{1}{2^{m+1}}$ by $K_m$. To estimate $K_m$, first notice that $K_{m_0}$ is a constant only depending on $m_0$ and the parameters in the assumptions of the theorem. Then for each $m\geq m_0+1$, since the balls $B_{\frac{1}{2^{m+1}}}$ are contained in the annulus $B_{\frac{1}{2}-\frac{1}{2^{m+1}}}\setminus B_{\frac{1}{2}-\frac{1}{2^{m-2}}}$, and these balls with half of the radius $\frac{1}{2^{m+2}}$ are disjoint and volume noncollapsed, we have
\be\label{Km}
cK_{m}\left(\frac{1}{2^{m+2}}\right)^n\leq C\left[\left(\frac{1}{2}-\frac{1}{2^{m+1}}\right)^n-\left(\frac{1}{2}-\frac{1}{2^{m-2}}\right)^n\right],
\ee
which implies that
\be\label{Km bound}
K_m\leq C2^{m(n-1)}.
\ee
In the above, the right hand side of \eqref{Km} is derived by integrating the area of geodesic spheres between $B_{\frac{1}{2}-\frac{1}{2^{m+1}}}$ and $B_{\frac{1}{2}-\frac{1}{2^{m-2}}}$, and using the volume element comparison. See e.g. \cite{WW} (also \cite{WZ} or \cite{ZZ}).

In \eqref{Ls pa g}, we replace $B_{\eta}(z)$ on the left hand side by the balls in the Whitney cover $\mathcal{B}$ and sum up all the integrals. Note that balls $\{B_{2\eta}(z)\}$ also form a Whitney cover of $B_{1/2}(x_{j_k})$, denoted by $2\mathcal{B}$, and the overlapping number $N$ is uniformly bounded regardless the choice of $m_0$. By using \eqref{Km bound}, one has
\[
\al
&\|\pa g_j\|^s_{L^s(B_{1/2}(x_{j_k}))}\\
\leq& \sum_{B\in\mathcal{B}}\|\pa g_j\|^s_{L^s(B)}\\
\leq & C\frac{1}{2^{asm_0}}\sum_{2B\in 2\mathcal{B}}\|\pa g_j\|^s_{L^s(2B)}\\
& +C(r_j+\|g_j-Id\|_{C^{\a}})^s\left(K_{m_0}(\frac{1}{2^{m_0+1}})^{n-(1-\a)ns}+\sum_{m=m_0+1}^{\infty}K_m(\frac{1}{2^{m+1}})^{n-(1-\a)ns}\right)\\
\leq & \frac{C}{2^{asm_0}}N\|\pa g_j\|^s_{L^s(B_{1/2}(x_{j_k}))}\\
& +C(r_j+\|g_j-Id\|_{C^{\a}})^s\left(C(m_0)+C\sum_{m=m_0+1}^{\infty}(\frac{1}{2})^{m[1-(1-\a)ns]}\right).
\eal
\]
Therefore, one can see that for any $s>q$, by choosing $m_0$ and $\a$ so that $\frac{C}{2^{asm_0}}N<\frac{1}{2}$ and $1-(1-\a)ns>0$, it follows that
\[
\al
\|\pa g_j\|^{s}_{L^s(B_{1/2}(x_{j_k}))}\leq &
\frac{1}{2}\|\pa g_j\|^{s}_{L^s(B_{1/2}(x_{j_k}))} +  C(r_j+\|g_j-Id\|_{C^{\a}}),
\eal
\]
which amounts to
\be
\lab{dgto0ls}
\|\pa g_j\|_{L^s(B_{1/2}(x_{j_k}))}\leq 2C(r_j+\|g_j-Id\|_{C^{\a}})\rightarrow 0,
\ee
since both $r_j\rightarrow 0$ and $\|g_j-Id\|_{C^{\a}}\rightarrow 0.$

This implies the $W^{1,s}$ convergence of $(B_{d(z_j,\pa B_{r_j^{-1}}(p_j))}(z_j), g_j, z_j)$. Indeed, for a fixed small radius $\eta>0$ and any compact subset $D \subset B_{d(z_j,\pa B_{r_j^{-1}}(p_j))}(z_j)$, by using volume comparison, one can get a uniform control (independent of $j$) of the number of points in the $\eta/4$-net of $D$. When $\eta$ is chosen small enough, we can get a covering of $D$ with balls with radius $\eta/2$, such that on each ball there is a harmonic coordinate chart. Then by using a similar argument as in the proof of Whitney embedding theorem, we may construct an smooth embedding from $D$ to $\mb{R}^N$. Moreover, under this embedding, the local images are graphs. Since from \eqref{dgto0ls}, we have the $W^{1,s}$ convergence of the metrics in harmonic coordinates to the Euclidean metric on $\mb{R}^n$, the transition functions of the covering of $D$ are converging in $W^{2,s}$ to the transition functions of $\mb{R}^n$. Also, for the same reason, the local graphs are converging in $W^{2,s}$ norm. And hence, when $j$ is large enough there exist diffeomorphisms between exhausting compact sets in $\mb{R}^n$ and sets $B_{d(z_j,\pa B_{r_j^{-1}}(p_j))}(z_j)$ such that the pull back metrics of $g_j$ are converging in $W^{1,s}$ norm to $Id_{ij}$. See e.g. Proposition 12 in \cite{HH} for more details. One can also find similar arguments on $W^{1, s}$ convergence in \cite{AC}. Note in that paper, one assumes the Ricci curvature is bounded from below. However, this assumption is only used to deduce volume comparison results which also holds in our situation. So the proof is valid in our case.

Therefore, we have shown that $(B_{d(z_j,\pa B_{r_j^{-1}}(p_j))}(z_j), g_j, z_j)\xrightarrow{C^{\a}\cap W^{1,s}}(\mathbb{R}^n,Id_{ij}, 0^n)$. \\

\hspace{-.5cm}{\it Step 4: constructing harmonic coordinates on balls with radius larger than 1}

From step 3 and the definition of Cheeger-Gromov convergence, we have that for $j$ sufficiently large, there is a diffeomorphism $F_j: \mathbb{R}^n\rightarrow \M_j$ such that $F_j^{*}g_j$ converges to $Id$ in $W^{1,s}$ topology on compact subsets of $\mathbb{R}^n$. Thus, there is a covering of $B_2(0^n)$, denoted by $\{B_{i}\}$, with balls of radius $1/2$, on each of which there is a harmonic coordinate chart $\{v_1,\cdots, v_n\}$ uniformly bounded in $C^{1,\a}\cap W^{2,s}$. In fact, the Laplace equation in Euclidean coordinates reads
\[
\Delta_j v_k = \frac{1}{\sqrt{det(h_j)}}\frac{\pa }{\pa x_a}(\sqrt{det(h_j)}h^{ab}_j\frac{\pa v_k}{\pa x_b})=0.
\]Here $\Delta_j$ is the Laplace operator of the metric $h_j=F_j^*g_j$, and $\{x_k\}$ are the standard Euclidean coordinates.
Thus, the $W^{2,s}$ bound of $v_k$ follows from the $W^{1,s}$ bound of $h_j$ and standard elliptic regularity theory.

To construct larger harmonic coordinate chart with respect to $h_j$, let $y_k=y_k(j)$ be the solution of the Dirichlet problem
\[
\Delta_j y_k=0,\ in\ B_{3/2}(0^n);\quad y_k=x_k\ on\ \pa B_{3/2}(0^n).
\]

We first show that $\{y_k\}$ gives a harmonic coordinate chart on $B_{5/4}(0^n)$. Indeed, let $w_k=x_k-y_k$, then
\be\label{laplace w}
\Delta_j w_k=\Delta_j x_k,\ in\ B_{3/2}(0^n),\ and\ w_k=0\ on\ \pa B_{3/2}(0^n).
\ee
In Euclidean coordinates $\Delta_j x_k= \frac{1}{\sqrt{det(h_j)}}\frac{\pa }{\pa x_a}(\sqrt{det(h_j)}h^{ab}\frac{\pa x_k}{\pa x_b})$. Since the metrics $h_j$ converges in $W_{loc}^{1,s}$ norm to the Euclidean metric, it implies that
\be\label{Ls of Delta x}
\|\Delta_j x_k\|_{L^s(B_{3/2}(0^n))}\rightarrow 0.
\ee
Thus, by the maximal principle, one gets that
\be\label{Linfty of w}
\|w_k\|_{L^{\infty}(B_{3/2}(0^n))}\rightarrow 0.
\ee
It then follows from the gradient estimate under Bakry-\'Emery Ricci condition that
\be\label{Linfty of dw}
\|\d_j w_k\|_{L^{\infty}(B_{5/4}(0^n))}\rightarrow 0.
\ee
Let $\phi$ be a cut-off function supported in $B_{3/2}(0^n)$ such that $\phi=1$ in $B_{11/8}(0^n)$ and $|\Delta_j \phi|+|\d_j \phi|\leq C$. From \eqref{laplace w} and Green's formula, we have
\[
\phi w_k(x)=-\int_{\M} G(x,y)\left[\phi\Delta_j x_k + 2<\d_j \phi, \d_j w_k> + w_k\Delta_j\phi\right]~dy.
\]
From Lemma \ref{lemma C alpha Green's function}, we have for $x_1,x_2\in B_{5/4}(0^n)$ that
\[
\al
&|\d_j w_k(x_1)-\d_j w_k(x_2)|\\
=&|\d_j (\phi w_k)(x_1)-\d_j (\phi w_k)(x_2)|\\
\leq & \int_{\M} |\d_j G(x_1,y) - \d_j G(x_2,y)|\left|\phi\Delta_j x_k + 2<\d_j \phi, \d_j w_k> + w_k\Delta_j\phi\right| dy\\
\leq& \int_{B_{3/2}(0^n)} \frac{Cd_j(x_1,x_2)^{\a}}{d_j(x_1,y)^{n-1+\a}}(|\Delta_j x_k|+|\d_j w_k|+|w_k|)dy.
\eal
\]
Then by H\"older inequality, \eqref{Ls of Delta x}, \eqref{Linfty of w} and \eqref{Linfty of dw}, it implies that for $\a\in(0, 1-\frac{n}{s})$
\[
\|w_k\|_{C^{1,\a}(B_{5/4}(0^n))}\rightarrow 0.
\]
In particular, $\{y_1,\cdots, y_n\}$ forms a coordinate system on $B_{5/4}(0^n)$ when $j$ is big enough.\\

\hspace{-.5cm}{\it Step 5: larger $W^{1,q}$ harmonic radius and contradiction}

It is left to show that $\eqref{W1q bound of g}$ is satisfied under $\{y_k\}$ with $r=\frac{5}{4}$. For this, we need to show that $y_k$ converges in $W^{2,s}$ norm. In each $B_i$, under the harmonic coordinates $\{v_1,\cdots,v_n\}$, \eqref{laplace w} can be written as
\[
h_j^{mn}\frac{\pa^2 w_k}{\pa v_m \pa v_n}=\Delta_j x_k,
\]
For any point $v_0\in B_i$, let $\phi$ be a cut-off function supported in $B_{2\eta}(v_0)$ such that $\phi=1$ in $B_{\eta}(v_0)$ and $|\Delta\phi|+|\d \phi|^2\leq C/\eta^2$, where $\eta$ is a small constant which will be determined later.

Then, we have
\[
\al
h_j^{ab}(v_0)\frac{\pa^2 (\phi w_k)}{\pa v_a\pa v_b}=&\left(h_j^{ab}(v_0)-h_j^{ab}(v)\right)\frac{\pa^2 (\phi w_k)}{\pa v_a\pa v_b} + h_j^{ab}(v)\frac{\pa^2 (\phi w_k)}{\pa v_a\pa v_b}\\
=&\left(h_j^{ab}(v_0)-h_j^{ab}(v)\right)\frac{\pa^2 (\phi w_k)}{\pa v_a\pa v_b} + \phi\Delta_j x_k+ 2h_j^{ab}\frac{\pa \phi}{\pa v_a}\frac{\pa w_k}{\pa v_b} + w_kh_j^{mn}\frac{\pa^2 \phi}{\pa v_a\pa v_b}\\
: = & F(v).
\eal
\]
Since $h_j^{mn}(v_0)$ is a constant satisfying $(1-c)Id\leq h_j(v_0)\leq (1+c)Id$, it follows that $\pa_x\pa_y G_{v_0}(x,y)$ is a Calder\'on-Zygmund kernel, where $G_{v_0}(x,y)$ is the kernel of the operator $h_j^{ab}(v_0)\frac{\pa^2 }{\pa v_a\pa v_b}$. Hence, it defines a Calder\'on-Zygmund operator bounded on $L^s$ space, namely, we have
\[
\|\frac{\pa^2 (\phi w_k)}{\pa v_a \pa v_b}\|_{L^s(B_{2\eta}(v_0))}\leq C\|F(v)\|_{L^s(B_{2\eta}(v_0))}.
\]
By the $C^{\a}$ boundedness of $h_j^{ab}$, one derives
\[
\|F\|_{L^s(B_{2\eta}(v_0))}\leq C\eta^{\a}\|\frac{\pa^2 (\phi w_k)}{\pa v_a \pa v_b}\|_{L^s(B_{2\eta}(v_0))} + \frac{C}{\eta^2}\left[\|\Delta_j x_k\|_{L^s}+\|\d_j w_k\|_{L^{\infty}}+\|w\|_{L^{\infty}}\right].
\]
By choosing $\eta$ small enough, we can make $C\eta^{\a}<\frac{1}{2}$, and hence from \eqref{Ls of Delta x}, \eqref{Linfty of w}, \eqref{Linfty of dw}, it follows that
\[
\|\frac{\pa^2 w_k}{\pa v_a \pa v_a}\|_{L^s(B_{\eta}(v_0))}\leq\|\frac{\pa^2 (\phi w_k)}{\pa v_a \pa v_b}\|_{L^s(B_{2\eta}(v_0))}\leq C\left[\|\Delta_j x_k\|_{L^s}+\|\d_j w_k\|_{L^{\infty}}+\|w\|_{L^{\infty}}\right]\rightarrow 0.
\]
Through a standard covering argument, it is easy to see that
\[
\|w_k\|_{W^{2,s}(B_{5/4}(0^n))}\rightarrow 0.
\]
This is sufficient to indicate that
\[
\|\pa_{y_m}h_j(\pa_{y_k}, \pa_{y_l})\|_{L^q(B_{5/4}(0^n))}=\left\|\frac{\pa x_a}{\pa y_m}\pa_{x_a}\left[h(\pa_{x_c},\pa_{x_d})\frac{\pa x_c}{\pa y_k}\frac{\pa x_d}{\pa y_l}\right]\right\|_{L^q(B_{5/4}(0^n))}\rightarrow 0.
\]
Therefore, it follows that $\{y_1,\cdots, y_n\}$ is a $W^{1,q}$ harmonic coordinate chart on $B_{5/4}(0^n)$ when $j$ is large enough, which in term induces a $W^{1,q}$ harmonic coordinate chart on a ball centered at $z_j$ with radius larger than 1 in $\M_j$, and contradicts to the hypothesis that the $W^{1,q}$ harmonic radius $r_h(z_j)=1$. \\

{\it Proof of (b):} The proof of part (b) is similar. One just needs to first notice that by modifying the proof of part (a) slightly, one can derive the following compactness result for manifolds under $W^{1,s}$ convergence.

\begin{theorem}\label{thm compactness}
Let $(\M^n_j, g_j, p_j)$ be a sequence of pointed Riemannian manifolds satisfying that $|Ric_{\M_j}+\d^2 L_j|\rightarrow 0$, $|\d L_j|\rightarrow 0$, and $(\M^n_j, d_j, p_j)\xrightarrow{d_{GH}}(\M_{\infty}, d_{\infty}, p)$. Suppose also the $W^{1, s}$ harmonic radius is bounded from below by a uniform positive constant for all $s>2n$. Then there is a $C^{\a}\cap W^{1,s}$ Riemannian metric $g_{\infty}$ on $\M_{\infty}$ such that $(\M^n_j, g_j, p_j)\xrightarrow{C^{\a}\cap W^{1,s}}(\M_{\infty}, g_{\infty}, p)$ in Cheeger-Gromov sense for any $0<\a<1$ and $1<s<\infty$.
\end{theorem}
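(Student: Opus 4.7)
The plan is to reuse Steps 1--3 of the proof of Theorem \ref{thm harmonic radius bound} (a), but now to run them inside the uniform harmonic charts supplied by the hypothesis rather than in the blow--up of a contradicting sequence. Fix a compact set $K\subset\M_\infty$ and, using the GH $\eps_j$--isometries, lift it to compact subsets $K_j\subset\M_j$. Because the $W^{1,s}$ harmonic radius is bounded below by a uniform $r_0>0$ throughout a fixed neighborhood of $K_j$, each point $x\in K_j$ carries a harmonic chart on $B_{r_0}(x)$ in which $(g_j)_{kl}$ satisfies the pinching \eqref{W1q bound of g}. The Sobolev embedding $W^{1,s}\hookrightarrow C^{\a}$ with $\a=1-\tfrac{n}{s}$, together with the Arzel\`a--Ascoli argument of Lemma 2.1 in \cite{And} (see also Step 1 of the proof of Theorem \ref{thm harmonic radius bound}), allows us, after passing to a subsequence, to assume that in each such chart $(g_j)_{kl}$ converges in $C^{\a'}$, for every $\a'<\a$, to some limit $g_{\infty,kl}$.

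To upgrade to $W^{1,s}$ convergence in each chart I would rerun the Green's function estimate of Step 2 for the semilinear equation \eqref{PDE g}, whose right--hand side $-2(R_{kl}+\d_k\d_l L_j)+2\d_k\d_l L_j$ now has both summands tending to zero in the relevant norms. In the quantitative estimates \eqref{term1}--\eqref{term5} the small parameters $r_j\delta_j^2$ and $r_j^2\delta_j^4$ are replaced by $\|\d L_j\|_{L^\infty}$ and $\|Ric+\d^2 L_j\|_{L^\infty}$, both going to $0$ by hypothesis. The Green's function bounds of Lemma \ref{mixted hessian estimate G} and Lemma \ref{lemma C alpha Green's function}, as well as the Calder\'on--Zygmund bound on $\pa_x\pa_y G_z(x,y)$, depend only on the pinching \eqref{W1q bound of g}, so they remain uniform across all charts and all $j$. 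This yields the analogue of \eqref{Ls pa g} on every harmonic ball, and the Whitney covering of Step 3 then absorbs the first term on the right and delivers $\|\pa(g_j-g_\infty)\|_{L^s}\to 0$ locally in each chart.

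To assemble the limit metric I would argue as in Steps 4--5 of the proof of Theorem \ref{thm harmonic radius bound}. Transition maps between two overlapping harmonic charts solve $\Delta_j v_k=0$ with coefficients $(g_j)^{ab}$ that converge in $W^{1,s}_{\rm loc}$; standard elliptic theory then gives convergence of these transition maps in $W^{2,s}_{\rm loc}$ to the transition maps of a $C^\a\cap W^{1,s}$ atlas on $\M_\infty$, and the local metrics assemble consistently into a Riemannian metric $g_\infty$ of class $C^\a\cap W^{1,s}$. Gluing the local diffeomorphisms via a partition of unity, as in Proposition 12 of \cite{HH} or the discussion following \eqref{dgto0ls}, produces, for $j$ large, diffeomorphisms $F_j$ from exhausting compact subsets of $\M_\infty$ into $\M_j$ such that $F_j^*g_j\to g_\infty$ in $C^\a\cap W^{1,s}$, which is precisely the claimed Cheeger--Gromov convergence.

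The main obstacle is the one flagged in the introduction: bounded sets in $W^{1,s}$ are not compact in $W^{1,s'}$ for $s'<s$, so $C^\a$ precompactness does not automatically yield convergence of the nonlinear term $Q(\pa g,g)$ in \eqref{PDE g}. The self--improving bound \eqref{Ls pa g}, together with the Whitney covering, is precisely what circumvents this difficulty. The fact that the limit is no longer forced to be $(\mb{R}^n, Id)$ causes no trouble, since Steps 1--3 are purely local inside the harmonic charts, where the metric is already close to Euclidean by construction.
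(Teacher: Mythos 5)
Your overall route is the paper's: Arzel\`a--Ascoli in the uniform harmonic charts for $C^{\a'}$ convergence, a rerun of Steps 2--3 of Theorem \ref{thm harmonic radius bound}(a) for the $W^{1,s}$ statement, and assembly of the limit atlas as in the discussion following \eqref{dgto0ls}. But there is a gap at exactly the crux. You propose to keep the Step 2 estimate in the form \eqref{Ls pa g}, only replacing the curvature smallness parameters by $\|Ric+\d^2L_j\|_{L^\infty}$ and $\|\d L_j\|_{L^\infty}$, and you justify the harmlessness of a non-flat limit by saying the metric is ``already close to Euclidean by construction.'' That closeness is only the fixed pinching $\tfrac1{10}$ in \eqref{W1q bound of g}; it does not tend to zero. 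In the original blow-up argument the decisive fact is $\|g_j-Id\|_{C^{\a}}\to 0$, because the limit there is flat. In the present setting that factor stays of size $O(1)$, so the estimate you describe, after the Whitney covering, only yields a uniform bound $\|\pa g_j\|_{L^s}\le C$ --- and, as the introduction of the paper stresses with the example $f_k=\tfrac1k\sin(kx)$, a bounded $W^{1,s}$ sequence need not converge in $W^{1,s}$. So the step ``this delivers $\|\pa(g_j-g_{\infty})\|_{L^s}\to 0$'' does not follow from what you actually set up.

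The fix is the one the paper makes explicit: the reference metric in Steps 2--3 must be $g_{\infty}$, not $Id$. Concretely, one replaces \eqref{defhkl} by $h_{kl}=(g_j-g_{\infty})_{kl}-(g_j-g_{\infty})_{kl}(z)$ and runs the Green's function representation for the equation satisfied by this difference, using that $g_{\infty}$ is a weak ($W^{1,s}$) solution of the limiting version of \eqref{PDE g}; the terms involving $Q(\pa g_j,g_j)-Q(\pa g_{\infty},g_{\infty})$ and $(g_j^{ab}-g_{\infty}^{ab})\pa_a\pa_b(g_{\infty})_{kl}$ must be handled by integration by parts against the Green kernel since $g_{\infty}$ has no classical second derivatives. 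With this substitution the right-hand side of the analogue of \eqref{Ls pa g} carries the factor $\e_j+\|g_j-g_{\infty}\|_{C^{\a}}$, which does tend to zero by your first paragraph, and the Whitney covering then gives $\|\pa g_j-\pa g_{\infty}\|_{L^s}\to 0$ as in the displayed inequality of the paper's proof. The rest of your argument (uniformity of the kernel and Calder\'on--Zygmund bounds under the pinching, convergence of transition maps in $W^{2,s}_{loc}$, gluing \`a la \cite{HH}, and Sobolev embedding to recover $C^{\a}$ for all $\a<1$) is fine and matches the paper.
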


Indeed, from the assumption and Arzela-Ascoli Lemma, we immediately get $C^{\a'}$ convergence of the sequence of manifolds for any $0<\a'<1-\frac{n}{s}$. To show the $W^{1,s}$ convergence, we just need to replace the Euclidean metric $Id$ in step 2 and 3 in the proof of part (a) by $g_{\infty}$, and estimate $\|\pa g - \pa g_{\infty}\|_{L^s}$ instead of $\|\pa g\|_{L^s}$. So instead of (\ref{dgto0ls}), one obtains:
\[
\|\pa g_j-\pa g_{\infty} \|_{L^s(B_{1/2}(x_{j_k}))}\leq 2C(\e_j+\|g-g_\infty\|_{C^{\a}})\rightarrow 0.
\]Here we have assumed, without loss, the harmonic radii is bounded from below by $1$. Also
\[
\e_j= ||Ric_{\M_j}+\d^2 L_j||_\infty+ ||\d L_j||_\infty.
\] Now the convergence in $C^\a$ sense follow from Sobolev imbedding.\\

With Theorem \ref{thm compactness} in hand, we can finish the proof of part (b). The difference from part (a) is that in this case, the fact that the limit space is $\mb{R}^n$ will follow from Cheeger-Gromoll splitting theorem as argued in \cite{And}. Indeed, by Theorem \ref{thm compactness} and the equation for the Ricci curvature tensor in harmonic radius, the limit space is Ricci flat. On the other hand, the injectivity radius becomes infinity after blowing up. Hence, Cheeger-Gromoll splitting theorem can be applied.

\qed \\

Following the arguments in \cite{ChCo2}, one may also show that under condition \eqref{basic assumption}, the codimension of the singular space of the Gromov-Hausdorff limit is still at least 2 (see Theorem 5.1 in \cite{WZ}). Combining this result with Theorem \ref{thm harmonic radius bound}, we have

\begin{theorem}[$\e$-regularity]\label{e regularity}
Given $\rho>0$ and $q>2n$, for each $\e>0$, there is a $\delta=\delta(\e~|n,\rho,q)$ such that if $(\M, g)$ is a Riemannian manifold with $|Ric+\d^2L|\leq (n-1)\delta^2$, $|\d L|\leq \delta$, and $\vol(B_{10}(p))\geq \rho$, and
\[
d_{GH}(B_2(p), B_2((0^{n-1},x^*)))\leq \e,
\]
where $(0^{n-1},x^*)\in \mathbb{R}^{n-1}\times X$ for some metric space $X$, then the $W^{1,q}$ harmonic radius $r_h(p)$ satisfies
\[
r_h(p)\geq 1.
\]

\end{theorem}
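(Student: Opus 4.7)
The plan is to prove this by a blow-down contradiction argument combining Theorem \ref{thm harmonic radius bound} with the codimension-two singular set result for Bakry-\'Emery limits (Theorem 5.1 in \cite{WZ}). Suppose the statement fails; then there exist sequences $\delta_j\to 0$ and $\eps_j\to 0$, manifolds $(\M_j, g_j, p_j)$, potentials $L_j$, and metric spaces $X_j$ satisfying the hypotheses but with $r_h(p_j)<1$. I would first extract a Gromov--Hausdorff limit via Gromov precompactness (valid here through the Bakry-\'Emery volume comparison of \cite{WW}): after passing to a subsequence, $(\M_j, d_j, p_j)\xrightarrow{d_{GH}}(\M_\infty, d_\infty, p_\infty)$ and $(\mb{R}^{n-1}\times X_j, (0^{n-1},x^*_j))\to(\mb{R}^{n-1}\times X_\infty, (0^{n-1},x^*_\infty))$. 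Since $\eps_j\to 0$, the ball $B_2(p_\infty)$ is isometric to $B_2((0^{n-1},x^*_\infty))$, so $\M_\infty$ splits off an $\mb{R}^{n-1}$ factor near $p_\infty$; and since $\delta_j\to 0$ with $|\d L_j|\to 0$, $\M_\infty$ is a non-collapsed almost Ricci-flat limit in the sense of \cite{WZ}, with volume convergence on compact sets.

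Next, I would show that $\M_\infty$ is isometric to $\mb{R}^n$ in a neighborhood of $p_\infty$. By Theorem 5.1 in \cite{WZ}, the singular set $\mc{S}\subset\M_\infty$ has Hausdorff dimension at most $n-2$. Near $p_\infty$ the limit factors as $\mb{R}^{n-1}\times X_\infty$, and non-collapsing forces $\dim_{\mc{H}}(\M_\infty)=n$, so $X_\infty$ is locally one-dimensional. The singular set of a product has the form $\mb{R}^{n-1}\times\mc{S}_{X_\infty}$, and the codimension-two bound forces $x^*_\infty\notin\mc{S}_{X_\infty}$; hence $X_\infty$ is locally isometric to an interval near $x^*_\infty$ and $\M_\infty$ is locally isometric to $\mb{R}^n$ near $p_\infty$. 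In particular $\vol(B_r(p_j))/\vol(B_r(0^n))\to 1$ for every fixed $r>0$.

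The contradiction now follows by a rescaling argument invoking Theorem \ref{thm harmonic radius bound}. Let $\delta_0=\delta(n,q)$ and $\theta_0=\theta(n,q)$ be the constants from that theorem and fix any $R>1/(\theta_0\delta_0^2)$. Rescale the metrics by $R^{-2}g_j$: the Bakry-\'Emery curvature becomes bounded by $(n-1)(R\delta_j)^2$, the potential gradient by $R\delta_j$, and both are at most $1$ once $j$ is large. The volume hypothesis (\ref{volume condition1}) for the rescaled metric translates to $\vol(B_{R\delta_0}(p_j))\geq(1-\delta_0)\vol(B_{R\delta_0}(0^n))$ in the original metric, which holds for $j$ large by the volume convergence above. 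Hence part (a) of Theorem \ref{thm harmonic radius bound} applies at the rescaled basepoint, yielding rescaled harmonic radius at least $\theta_0\delta_0^2$; rescaling back gives $r_h(p_j)\geq R\theta_0\delta_0^2>1$, contradicting $r_h(p_j)<1$.

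The main obstacle is the structural step showing $\M_\infty$ is locally Euclidean near $p_\infty$: this rests on the codimension-two theorem for Bakry-\'Emery limits in \cite{WZ}, which is considerably more delicate than the classical Ricci case and requires the full machinery of generalized Cheeger--Colding theory adapted to this setting. Everything else is standard Gromov--Hausdorff convergence, Cheeger--Colding volume convergence, and rescaling bookkeeping; note that fixing $R$ as a finite constant (rather than letting $R\to\infty$) lets us invoke volume convergence only at a single fixed scale, which avoids any subtlety about compactness of the convergence.
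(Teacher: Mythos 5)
The central step of your argument fails: the way you invoke Theorem \ref{thm harmonic radius bound}(a) after rescaling by $R^{-2}$ requires, as you yourself compute, the almost-Euclidean volume bound $\vol(B_{R\delta_0}(p_j))\geq(1-\delta_0)\vol(B_{R\delta_0}(0^n))$ in the \emph{original} metric at the scale $R\delta_0>1/(\theta_0\delta_0)$, which is far larger than $10$. Nothing in the hypotheses controls that scale: the Gromov--Hausdorff information lives only in $B_2(p)$ and the non-collapsing only in $B_{10}(p)$, and even after your structural step the limit is known to be Euclidean only inside a ball of radius at most $2$ about $p_\infty$ (possibly much less). Your claim that $\vol(B_r(p_j))/\vol(B_r(0^n))\to 1$ ``for every fixed $r>0$'' is therefore unjustified and is false in general (e.g.\ the $\M_j$ could be closed of diameter $30$). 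This cannot be repaired by a different choice of $R$: Theorem \ref{thm harmonic radius bound}(a) only yields $r_h\geq\theta\delta^2$, a tiny fraction of the scale on which its hypotheses are imposed, so forcing the conclusion $r_h\geq 1$ always pushes its volume hypothesis out to a scale $\sim 1/(\theta\delta)\gg 2$. The route the paper's one-line proof points to is different: apply Theorem \ref{thm harmonic radius bound}(a) only at the small scale $\delta$ (its volume hypothesis then does follow from volume convergence inside $B_2(p_\infty)\cong B_2(0^n)$), obtaining the uniform bound $r_h(p_j)\geq\theta\delta^2>0$; then use the compactness Theorem \ref{thm compactness} to upgrade the convergence to $C^\a\cap W^{1,s}$ convergence to the flat limit near $p_\infty$, and transplant harmonic coordinates from the limit exactly as in Steps 4--5 of the proof of Theorem \ref{thm harmonic radius bound} (equivalently, use the continuity of the $W^{1,q}$ harmonic radius under this convergence, as is done in the proof of Theorem \ref{codimension 4}) to get $r_h(p_j)\geq 1$ for large $j$ and the contradiction.

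Two further points. First, your structural step tacitly needs $x^*$ to be a cone vertex (as in Lemma \ref{lem coordinate approximation}): with $X$ an arbitrary metric space, the limiting cross-section $X_\infty$ could be a circle of small length $\ell$, which has no singular points at all, so Theorem 5.1 of \cite{WZ} gives nothing; then $\M_\infty$ near $p_\infty$ is $\mathbb{R}^{n-1}\times S^1_\ell$, your ``locally isometric to $\mathbb{R}^n$'' holds only at scale $\ell$, and the unit-scale volume ratio is far from Euclidean, so the volume claim collapses at the first interesting scale, not just at scale $R\delta_0$. (This loophole is partly a defect of the statement itself, but your proof must either invoke the vertex condition or rule the circle out.) Second, with the theorem quantified as written (``for each $\e$ there is $\delta$''), the contradiction sequence should keep $\e$ fixed and send only $\delta_j\to 0$; sending $\e_j\to 0$ as you do proves only the small-$\e$ version of the statement.
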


\section{The Transformation Theorem}

In this and next section, following the guidelines in \cite{ChNa2}, we prove the Transformation and Slicing Theorems, which allow us to derive the Codimension 4 Theorem by following the remaining arguments as in \cite{ChNa2}. However, since our assumption is made on the Bakry-\'Emery Ricci curvature, to be able to overcome some technical difficulties, we need to add a weight to the concepts used in \cite{ChNa2}. We start by restating the definition of $\e$-splitting map introduced in \cite{ChNa2}.

\begin{definition}
\label{def e splitting}
A harmonic map $u=(u^1, u^2,\cdots, u^k):\ B_r(x)\rightarrow \mathbb{R}^k$ is an $\e$-splitting map, if\\
(1) $|\d u|\leq 1+\e$ in $B_r(x)$;\\
(2) $\displaystyle \fint_{B_r(x)}\left|<\d u^i, \d u^j>-\delta_{ij}\right|^2\leq \e^2$, $\forall i,j$;\\
(3) $\displaystyle r^2\fint_{B_r(x)}|\d^2 u^i|^2\leq \e^2$, $\forall i$.
\end{definition}

Denote by $\Delta_L:=\Delta-\d L\cdot \d$ the drifted Laplacian by the vector field $\d L$, $dV_L:=e^{-L}dV$ the weighted volume form, $ \vol_L(B_r(x)):=\int_{B_r(x)}dV_L$ the weighted volume of the geodesic ball $B_r(x)$, and $\fint^L_{B_r(x)} \cdots :=\frac{1}{\vol_L(B_r(x))}\int_{B_r(x)}\cdots dV_L$ the weighted average value over the ball $B_r(x)$.

In the definition above, using the drifted Laplacian and weighted average value instead of the regular ones, we define
\begin{definition}\label{def drifted e splitting}
A map $f=(f^1,f^2,\cdots,f^k): B_r(x)\rightarrow \mb{R}^k$ is called an $L$-harmonic map, if $\Delta_L f^i=0$, for $i=1,2,\cdots,k$.

Moreover, an $L$-harmonic map $f: B_r(x)\rightarrow \mb{R}^k$ is called an $L$-drifted $\e$-splitting map if\\
(1') $|\d f|\leq 1+\e$ in $B_r(x)$;\\
(2') $\displaystyle \fint^L_{B_r(x)}\left|<\d f^i, \d f^j>-\delta_{ij}\right|^2\leq \e^2$, $\forall i,j$;\\
(3') $\displaystyle r^2\fint^L_{B_r(x)}|\d^2 f^i|^2\leq \e^2$, $\forall i$.
\end{definition}

In the following, we first prove that the concepts of $\e$-splitting and $L$-drifted $\e$-splitting maps are equivalent. This equivalence will be used in the proof of Theorem \ref{codimension 4}.

\begin{lemma}\label{equiv. splitting maps}
Given $\rho>0$. For each $\e>0$ there exists an $\delta=\delta(\e~|n,\rho)$ satisfying the following property. Suppose that a manifold $\M^n$ satisfies $Ric+\d^2 L\geq -(n-1)\delta$, $|\d L|\leq \delta$,  and $\vol(B_{1}(x))\geq \rho$. Then for any $r\leq 1$, and an $\e$-splitting map $u$ on $B_r(x)$, there is an $L$-drifted $C\e^{1/2}$-splitting map $f$ on $B_{\frac{1}{4}r}(x)$ for some constant $C=C(n,\rho)$, and the converse is also true.

The notation $\delta(\e~|n,\rho)$ means a constant depending on the parameters in the parenthesis and $\delta\rightarrow 0$ as $\e\rightarrow0$.
\end{lemma}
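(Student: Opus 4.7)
The plan is to prove each direction by modifying the given map through a small correction so that it satisfies the correct harmonicity condition, then controlling the correction using the smallness of $|\d L|\le\de$. I sketch the forward direction; the converse is essentially symmetric.

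Given an $\e$-splitting $u=(u^1,\dots,u^k)$ on $B_r(x)$, my $f^i$ will be the solution of the Dirichlet problem $\De_L f^i=0$ in $B_{r/2}(x)$ with $f^i=u^i$ on $\pa B_{r/2}(x)$. Writing $w^i:=f^i-u^i$, the harmonicity of $u^i$ gives $\De_L w^i=-\De_L u^i=\d L\cdot\d u^i$ with zero boundary data, and the right-hand side is bounded pointwise by $\de(1+\e)\le 2\de$. The three estimates I will need on $w^i$ are: an $L^\infty$ bound $\|w^i\|_{L^\infty(B_{r/2}(x))}\le C\de r^2$ from the maximum principle (equivalently, from the Green's function representation using \eqref{Green's function estimate1}); an interior gradient bound $\sup_{B_{r/4}(x)}|\d w^i|\le C\de r$ from standard interior elliptic estimates for $\De_L$; and an $L^2$ Hessian bound $r^2\fint_{B_{r/4}(x)}|\d^2 w^i|^2\le C\de^2$ obtained from the weighted Bochner formula with a cutoff. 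All three hold with constants depending only on $n$ and $\rho$, using the Bakry-\'Emery lower bound together with the non-collapsing $\vol(B_1(x))\ge\rho$.

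Expanding $\d f^i=\d u^i+\d w^i$, the splitting conditions for $f$ then follow from those for $u$ plus corrections quadratic in $\de$: pointwise $|\d f^i|\le 1+\e+C\de$, and in $L^2$ average on $B_{r/4}(x)$,
\[
\fint \bigl|\langle\d f^i,\d f^j\rangle-\de_{ij}\bigr|^2 \le C(\e^2+\de^2), \qquad r^2\fint|\d^2 f^i|^2\le C(\e^2+\de^2).
\]
Since $|\d L|\le\de$, after normalizing $L(x)=0$ we have $|L|\le 2\de$ on $B_{2r}(x)$, so the weighted and unweighted averages differ by a factor $1+O(\de)$, which is harmless. Choosing $\de=\de(\e|n,\rho)\le\e^{1/2}$ converts each output bound into one of the form $C\e^{1/2}$, producing the desired $L$-drifted $C\e^{1/2}$-splitting map on $B_{r/4}(x)$. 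The converse proceeds identically: given an $L$-drifted $\e$-splitting $f$, define $u^i$ by solving $\De u^i=0$ with $u^i=f^i$ on $\pa B_{r/2}(x)$, so that $\De w^i=-\De f^i=-\d L\cdot\d f^i$, and the same three estimates on $w^i=u^i-f^i$ recover the standard $C\e^{1/2}$-splitting conditions.

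The main obstacle I anticipate is the $L^2$ Hessian estimate for $\De_L w=h$ under only a lower bound on the Bakry-\'Emery tensor. The weighted Bochner identity
\[
\tfrac{1}{2}\De_L|\d w|^2 = |\d^2 w|^2 + \langle\d w,\d \De_L w\rangle + (\Ric+\d^2 L)(\d w,\d w)
\]
has a curvature term which, under $\Ric+\d^2 L\ge -(n-1)\de$, contributes only $\ge -C\de|\d w|^2$ and is absorbed. Integrating against $\phi^2$ with $\phi$ a cutoff supported in $B_{r/2}(x)$ and equal to $1$ on $B_{r/4}(x)$ then yields the required interior bound, with the volume factors in passing between integral and average form controlled by the non-collapsing hypothesis. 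The remaining work is the bookkeeping check that every constant depends only on $n$ and $\rho$, so that a single choice of $\de=\de(\e|n,\rho)$ serves both directions uniformly.
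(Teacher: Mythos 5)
Your proposal is correct and takes essentially the same route as the paper: solve the Dirichlet problem $\Delta_L f^i=0$ with boundary data $u^i$, control the correction $w^i=f^i-u^i$ (which satisfies $\Delta_L w^i=\d L\cdot\d u^i$) via the maximum principle and the drifted gradient estimate, and obtain the Hessian bound from the weighted Bochner formula with a cutoff. The one step to make explicit is that in your Bochner argument for $w^i$ the term $\langle \d w^i,\d \Delta_L w^i\rangle$ cannot be estimated pointwise, since $\d(\d L\cdot\d u^i)$ involves $\d^2 L$, which is not bounded under the hypotheses (only $\Ric+\d^2L$ is bounded below); it must be treated by integrating by parts against the cutoff, whereas the paper avoids the issue by applying the Bochner formula directly to $f^i$, for which this term vanishes and the cutoff term is handled via $\int \phi\,\Delta_L|\d f^i|^2=\int(|\d f^i|^2-1)\Delta_L\phi$.
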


\proof Suppose that $u$ is an $\e$-splitting map on $B_r(x)$. Without loss of generality, we may assume that $\e\leq 1$. Let $h^i$, $i=1,2,\cdots,k$, be the solution of the Dirichlet problem
\be\label{Delta L 1}
\Delta_L f^i=0\ \in \ B_r(x);\quad f^i=u^i\ on\ \pa B_{r}(x).
\ee

Since $u^i$ is a harmonic function, the function $h^i=f^i-u^i$ satisfies
\be\label{Delta L 2}
\Delta_L (h^i)=\d L\d u^i\ \in \ B_r(x);\quad h^i=0\ on\ \pa B_{r}(x).
\ee
Observe that $\Delta_L=e^L\textrm{div}(e^{-L}\d)$ and we can assume $L$ is locally bounded by replacing $L(\cdot)$ by
$L(\cdot)-L(x)$, it is well known that the integral maximum principle (or mean value property) and gradient estimate still hold for equation \eqref{Delta L 2} (see e.g. \cite{WZ}).

From the assumption on $u$, we have $|\d u|\leq 1+\e$. Combining this with $|\d L|\leq \delta$ and using the maximum principle, we get that for some $q>n/2$,
\[
\sup_{B_r(x)}|h^i|\leq Cr^2\left(\fint_{B_r(x)}|\d L \d u^i|^q\right)^{1/q}\leq C\delta r^2.
\]
Then it follows from the gradient estimate that
\be\label{d h}
\sup_{B_{\frac{1}{2}r}(x)}|\d h^i|^2\leq C\left[r^{-2}\fint_{B_r(x)}|h^i|^2 + \left(\fint_{B_r(x)}|\d L\d u^i|^{2q}\right)^{1/q}\right]\leq C\delta^2,
\ee
i.e.,
\be\label{1'}
\sup_{B_{\frac{1}{2}r}(x)}|\d f^i|\leq 1+\e+C\delta.
\ee
Also, from \eqref{d h}, (1) and (2) in Definition \ref{def e splitting}, and the boundedness of $L$, one has
\be\label{2'}
\al
&\fint^L_{B_{\frac{1}{2}r}(x)}|<\d f^i, \d f^j>-\delta_{ij}|\\
\leq& \fint^L_{B_{\frac{1}{2}r}(x)}|<\d h^i, \d u^j>|+|<\d u^i, \d h^j>| + |<\d h^i, \d h^j>| + |<\d u^i, \d u^j>-\delta_{ij}|\\
\leq& C(\delta+\e).
\eal
\ee
Now, let $\phi$ be  a cut-off function supported in $B_{\frac{1}{2}r}(x)$ with $\phi=1$ in $B_{\frac{1}{4}r}(x)$ and $|\d \phi|^2+|\Delta \phi|\leq \frac{C}{r^2}$ (See Lemma 1.5 in \cite{WZ}). It is straightforward to check that for the drifted Laplacian we have the following Bochner's formula.
\[
\Delta_L|\d F|^2=2|\d^2 F|^2+2<\d \Delta_L F, \d F> + 2(Ric+\d^2L)(\d F, \d F).
\]
Setting $F=f^i$, it implies that (see e.g. page 13 in \cite{WZ})
\be\label{3'}
\al
r^2\fint^L_{B_{\frac{1}{4}r}(x)}|\d^2 f^i|^2\leq & r^2\fint^L_{B_{\frac{1}{2}r}(x)} \phi|\d^2 f^i|^2\\
\leq& r^2\fint^L_{B_{\frac{1}{2}r}(x)} \frac{1}{2}\phi\Delta_L|\d f^i|^2 + (n-1)\delta|\d f^i|^2\\
=& C\delta +  \frac{1}{2}r^2\fint^L_{B_{\frac{1}{2}r}(x)}(|\d f^i|^2-1)\Delta_L\phi\\
\leq& C\delta + C(1+\delta)\fint^L_{B_{\frac{1}{2}r}(x)}\left||\d f^i|^2-1\right|\\
\leq& C(\delta+\e).
\eal
\ee
Here, in the last step, we have used \eqref{2'}.

Combining \eqref{1'}, \eqref{2'}, and \eqref{3'}, we have shown that $f$ is an $L$-drifted $C\e^{1/2}$-splitting map on $B_{\frac{1}{4}r}(x)$ for sufficiently small constant $\delta$.\qed\\

Next, recall the concept of the singular scale in \cite{ChNa2}:

\begin{definition}
Let $u: B_2(p)\rightarrow \mb{R}^k$ be a harmonic map. For $x\in B_1(p)$, $\delta>0$, the singular scale $s_x^{\delta}\geq0$ is the infimum of radii $s$ such that for all $s\leq r\leq \frac{1}{4}$ and all $1\leq l\leq k$, we have
\be\label{singular scale}
r^2\fint_{B_r(x)}|\Delta|\tilde{w}^l||\leq \delta\fint_{B_r(x)}|\tilde{w}^l|,
\ee
where $\tilde{w}^l=du^1\wedge du^2\wedge\cdots\wedge du^l$.
\end{definition}

Replacing harmonic map and Laplacian $\Delta$ above by $L$-harmonic map and the drifted Laplacian $\Delta_L$, we define similarly

\begin{definition}
Let $f: B_2(p)\rightarrow \mb{R}^k$ be an $L$-harmonic map. For $x\in B_1(p)$, $\delta>0$, the $L$-singular scale $s_{L,x}^{\delta}\geq0$ is the infimum of radii $s$ such that for all $s\leq r\leq \frac{1}{4}$ and all $1\leq l\leq k$, we have
\be\label{L singular scale}
r^2\fint^L_{B_r(x)}|\Delta_L|w^l||\leq \delta\fint^L_{B_r(x)}|w^l|,
\ee
where $w^l=df^1\wedge df^2\wedge\cdots\wedge df^l$.
\end{definition}

In the proofs of the Transformation and Slicing Theorems, we will use $L$-singular scale, but return to $\e$-splitting maps at the end when . Now, we are ready to state the Transformation Theorem, whose proof essentially follows the idea of \cite{ChNa2}. But for the purpose of deriving the higher order estimates as in Theorem 1.26 in \cite{ChNa2}, we first need to work with the drifted Laplacian and $L$-drifted $\e$-splitting maps, and prove certain transformation theorem under this weighted setting. Then come back to the regular Laplacian and $\e$-splitting maps by using the equivalence between $\e$-splitting maps and drifted $\e$-splitting maps in Lemma \ref{equiv. splitting maps}.

It seems that by using a Green's function argument instead of the heat kernel argument in \cite{ChNa2} for Claim 3 below, and adapting an argument in \cite{Bam}, the original proof can be shortened. Moreover, a uniformly positive lower bound of the diagonal entries of the matrices in the conclusion is obtained. More precisely, we have

\begin{theorem}[Transformation Theorem]\label{transformation theorem}
Given $\rho>0$. For every $\e>0$, there exists a $\delta=\delta(\e~|n,\rho)>0$ with the following property. Suppose that a manifold $\M^n$ satisfies $\left|Ric+\d^2 L\right|\leq (n-1)\delta$ with $|\d L|\leq \delta$, and $\vol(B_{10}(p))\geq \rho$, and let $f: B_{2}(p)\rightarrow \mb{R}^k$ be an $L$-drifted $\delta$-splitting map. Then

a) for any $x\in B_1(p)$ and $r\in[s_{L,x}^{\delta},\frac{1}{4}]$, there exists a lower triangular matrix $A=A(x,r)$ with positive diagonal entries so that $A\circ f: B_r(x)\rightarrow \mb{R}^k$ is an $L$-drifted $\e$-splitting map;

b) there is a constant $c_0=c_0(n)>0$, such that for any matrix $A(x,r)=(a_{ij})$ above, we have
\be\label{transformation eq1}
a_{ii}\geq c_0, \ 1\leq i\leq k.
\ee

\end{theorem}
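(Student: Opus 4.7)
The plan is to construct the matrix $A(x,r)$ directly as the inverse of the Cholesky factor of the weighted mean Gram matrix of $\d f$ on $B_r(x)$, then verify the three $L$-drifted $\epsilon$-splitting conditions by combining the $L$-singular scale inequality with the drifted Bochner formula; the lower bound on the diagonal entries in part (b) will fall out as an algebraic consequence of the Cholesky construction together with the pointwise gradient bound on $f$. Concretely, for $x\in B_1(p)$ and $r\in[s_{L,x}^\delta, 1/4]$ set
\[
M_{ij}(x,r):=\fint^L_{B_r(x)}\langle \d f^i, \d f^j\rangle\, dV_L,
\]
show that $M(x,r)$ is uniformly positive definite (its principal $l\times l$ minor is comparable to $\fint^L|w^l|^2$, and the $L$-singular scale inequality iteratively propagates $|w^l|\approx 1$ from scale $1/4$ down to every $r\ge s_{L,x}^\delta$), then perform the Cholesky decomposition $M=LL^T$ with $L$ lower triangular and positive diagonal entries $l_{11},\dots,l_{kk}$, and set $A(x,r):=L^{-1}$. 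By construction $A$ is lower triangular with diagonal $a_{ii}=1/l_{ii}$ and $AMA^T=\mathrm{Id}$.

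For part (b), the gradient bound $|\d f^i|\le 1+\delta$ inherited from $f$ being $L$-drifted $\delta$-splitting on $B_2(p)$ gives $M_{ii}(x,r)\le (1+\delta)^2$ at every scale. Since any Cholesky factor satisfies $l_{ii}^2\le M_{ii}$ (the $i$-th diagonal step subtracts a nonnegative quantity from $M_{ii}$), it follows that $l_{ii}\le 1+\delta$, hence $a_{ii}=l_{ii}^{-1}\ge (1+\delta)^{-1}\ge c_0(n)$ uniformly in $x$ and $r$.

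For part (a), the averaged orthogonality $\fint^L_{B_r(x)}\langle\d(A\circ f)^i,\d(A\circ f)^j\rangle=(AMA^T)_{ij}=\delta_{ij}$ is automatic. The pointwise $L^2$ closeness required by (2') is obtained by applying the singular scale inequality to each wedge form $w^l$: the $L^1$-smallness of $\Delta_L|w^l|$ combined with a weighted Green's function representation (the drifted analogue of Lemma \ref{lemma C alpha Green's function}) forces $|w^l|$ to be pointwise close to its weighted average, and the identity $|w^l(A\circ f)|=\bigl(\prod_{i\le l}a_{ii}\bigr)|w^l(f)|$ together with the determinantal relations among the leading minors of the Gram matrix converts this into pointwise closeness of $\langle\d(A\circ f)^i,\d(A\circ f)^j\rangle$ to $\delta_{ij}$. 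The gradient bound (1') then follows from the drifted Bochner inequality $\Delta_L|\d(A\circ f)^i|^2\ge -C\delta$: approximate subharmonicity plus the mean value inequality upgrades the averaged value $1$ to the pointwise bound $1+\epsilon$. Finally the Hessian bound (3') is obtained by integrating the same Bochner identity against a cutoff function and absorbing the Ricci-Hessian term using $|\Ric+\d^2L|\le(n-1)\delta$.

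The main obstacle lies in propagating the pointwise near-constancy of $|w^l|$ in the drifted setting uniformly across all dyadic scales $r\ge s_{L,x}^\delta$, which requires a Green's function estimate for $\Delta_L$ quantitatively compatible with the singular scale parameter and a lossless absorption of the $\d L$ term. This is precisely where the Bamler-style linear algebra argument enters, organizing the scale-by-scale evolution of $A(x,r)$; the lower bound (b) is not an aesthetic improvement but a structural necessity, since if any $a_{ii}$ degenerated at an intermediate scale the multiplicative factor $\prod_{i\le l}a_{ii}$ in the wedge-form identity would blow up and the inductive propagation through the singular scale inequality would collapse.
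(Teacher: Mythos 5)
There is a genuine gap, and it sits exactly at the heart of the theorem. Your plan normalizes the averaged Gram matrix $M(x,r)=\fint^L_{B_r(x)}\langle \d f^i,\d f^j\rangle$ by its Cholesky factor and then asserts that the singular-scale inequality ``iteratively propagates $|w^l|\approx 1$'' down to every $r\ge s^{\delta}_{L,x}$, and that pointwise near-constancy of the wedge norms $|w^l|$ converts into the variance bound (2') for the individual inner products. Neither step is justified, and both are precisely what the paper's argument is built to overcome. First, the singular-scale inequality at a given scale only controls the oscillation of $|w^l|$ relative to its own average at that scale, with an error of size $\delta$ (or $C\e$) per dyadic scale; since $r$ may be smaller than any fixed power of $\delta$, a direct iteration accumulates these errors over $\log(1/r)$ scales and the bound degenerates (indeed the correct statement, Claim 1 in the paper, is only the multiplicative control $(1-C\e)A_{2r}\le A_r\le(1+C\e)A_{2r}$, which allows $|A_r|$ and $\sup|w^l|$ to drift like $r^{-C\e}$, not to stay near $1$). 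The paper avoids the accumulation problem by induction on $k$ together with a contradiction at the \emph{maximal} bad scale $r_j$, rescaling $g_j\to r_j^{-2}g_j$ with $\delta_j\to 0$, so that all error terms become $\e_j(R)\to 0$; there is no known way to run your direct iteration with a fixed $\delta$. Second, exact normalization $AMA^T=\mathrm{Id}$ only yields that the \emph{averages} $\fint^L\langle \d v^i,\d v^j\rangle$ equal $\delta_{ij}$; condition (2') is an $L^2$ variance bound, and passing from control of the scalar quantities $|w^l|$ to control of the full matrix of inner products is exactly where the paper needs the inductive hypothesis (the first $k-1$ components are already $\e_j$-splitting after transformation), the Green's-function/maximal-function variance estimates of Claims 3--4, and the final Bamler-type correction $\tilde v^k=v^k+\sum_l a^l_j v^l$ with renormalization -- i.e.\ the transforming matrix is \emph{not} simply the Cholesky factor of $M$. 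Your phrase ``determinantal relations among the leading minors'' also hides the fact that the determinant of the averaged Gram matrix is not comparable to the average of the Gram determinant $\fint|w^l|^2$ without precisely the fluctuation control in question.

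By contrast, your argument for part (b) is essentially sound as algebra ($l_{ii}^2\le M_{ii}\le(1+C\delta)^2$ gives $a_{ii}\ge c_0$ for the Gram-normalizing matrix), and it is close in spirit to the paper's proof; but the paper proves (b) for \emph{any} lower-triangular $A$ with positive diagonal making $A\circ f$ an $\e$-splitting map, using only the approximate orthonormality of the $dv^i$ under $\fint^L\langle\cdot,\cdot\rangle$ together with $|\d f^l|\le 1+C\delta$, whereas yours applies only to the exactly-normalizing Cholesky matrix, whose admissibility for part (a) is the unproven point. To repair the proposal you would need to replace the direct scale-by-scale iteration with the induction-on-$k$ plus blow-up-at-the-critical-scale scheme (or supply a genuinely new mechanism preventing error accumulation across scales), and add the last-component correction step before renormalizing.
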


\begin{proof}
Following \cite{ChNa2}, we prove by induction on $k$. Unless otherwise specified, the letter $C$ always denotes some constant depending on $n$, $\lambda$ and $\rho$. First of all, the proof of the theorem when $k=1$ is analogous to the proof of lemma 3.34 in \cite{ChNa2}. By using the Bochner's formula, we get
\be\label{L Bochner}
\Delta_L|\d f|=\frac{|\d^2 f|^2-\left|\d|\d f|\right|^2}{|\d f|}+\frac{(Ric+\d^2L)(\d f,\d f)}{|\d f|}.
\ee
Notice that since $\Delta f = <\d L, \d f>$, the improved Kato's inequality becomes
\[
|\d|\d f||^2\leq \frac{2n-1}{2n-2}|\d^2 f|^2 + |\d L|^2|\d f|^2.
\]
Thus, it follows from \eqref{L Bochner} that
\[
\Delta_L|\d f|\geq \frac{1}{2n-2}\frac{|\d^2 f|^2}{|\d f|} - C\delta |\d f|.
\]
Then using \eqref{L singular scale} gives
\[
r^2\fint^L_{B_{2r}(x)}\frac{|\d^2 f|^2}{|\d f|}\leq C\delta\fint^L_{B_{2r}(x)}|\d f|,
\]
and hence,
\[
r\fint^L_{B_{2r}(x)} |\d^2 f|\leq \left(r^2\fint^L_{B_{2r}(x)}\frac{|\d^2 f|^2}{|\d f|}\right)^{1/2}\left(\fint^L_{B_{2r}(x)}|\d f|\right)\leq C\delta^{1/2}\fint^{L}_{B_{2r}(x)}|\d f|.
\]
Thus, by setting $v=f\bigg/\left(\fint^L_{B_r(x)} |\d f|\right)$, we may proceed as in Lemma 3.34 in \cite{ChNa2}. Here notice that the heat kernel Gaussian bounds was used in the proof of Lemma 3.34 in \cite{ChNa2}. In our case, it is well known that the Gaussian bounds of the heat kernel and Green's functions estimates for the drifted Laplacian $\Delta_L$ are still valid, since both $|\d L|$ and $|L|$ are bounded. Or instead, one can use the mean value property.\\

Now suppose that the theorem holds for $k-1$ and fails for $k$. Then there exists an $\e>0$ such that for some $\delta_j\rightarrow 0$, there is a sequence of pointed manifolds $(\M^n_j,g_j,p_j)$ and smooth functions $\{L_j\}$ with $$\left|Ric_{\M_j}+\d^2 L_j\right|\leq (n-1)\delta_j,\quad |\d L_j|\leq \delta_j,\quad \vol(B_{10}(p_j))\geq \rho,$$
and $L_j$-drifted $\delta_j$-splitting maps $f_j: B_2(p_j)\rightarrow \mb{R}^k$ together with points $x_j\in B_1(p_j)$ and $r_j\in[s_{L_j,x_j}^{\delta_j},\frac{1}{4}]$, such that there is no lower triangular matrix $A$ with positive diagonal entries so that $A\circ f_j: B_{r_j}(x_j)\rightarrow R^k$ is $L_j$-drifted $\e$-splitting.

Notice that $r_j\rightarrow 0$. Indeed, if $r_j\geq c>0$, then since $r_j\leq 1/4$, we have $B_{c}(x_j)\subseteq B_{r_j}(x_j)\subseteq B_{5/4}(p_j)\subseteq B_{3/2}(x_j)\subseteq B_2(p_j)$, which means the sizes of all these balls are comparable. Then the fact that $f_j: B_2(p_j)\rightarrow \mb{R}^k$ is $L_j$-drifted $\delta_j$ splitting and the volume doubling property immediately implies that $f_j: B_{r_j}(x_j)\rightarrow \mb{R}^k$ is an $L_j$-drifted  $C\delta_j$-splitting map, which in particular is an $L_j$-drifted $\e$-splitting when $j$ is big enough, and hence contradicts to the hypothesis above.

Thus, we may assume that $r_j$ is the supremum of the radii for which $A\circ f_j: B_{r_j}(x_j)\rightarrow R^k$ is not an $L_j$-drifted $\e$-splitting map for any lower triangular matrix $A$. It then follows that there exists a lower triangular matrix $A_j$ such that $A_j\circ f_j: B_{2r_j}(x_j)\rightarrow \mb{R}^k$ is an $L_j$-drifted $\e$-splitting map. Moreover, since $|\d L_j|$ is bounded, by replacing $L_j$ by $L_j-L_j(x_j)$ whenever necessary, we may assume that $|L_j|$ is bounded in $B_{1}(x_j)$.

Let
\be\label{vj}
v_j=r_j^{-1}A_j\circ(f_j-f_j(x_j)),
\ee
and use the rescaled metric $g'_j=r_j^{-2}g_j$ for the following arguments. Then $v_j: B_2(x_j)\rightarrow \mb{R}^k$ is $L_j$-drifted $\e$-splitting, and for any $2\leq r\leq \frac{1}{4}r_j^{-1}$, there is a lower triangular matrix $A_r$ with positive diagonal entries such that $A_r\circ v_j: B_r(x_j)\rightarrow \mb{R}^k$ is $L_j$-drifted $\e$-splitting.\\

The following Claim 1 and 2 are directly from \cite{ChNa2} (see pages 1118-1121 for proofs). The only change caused by the drifted situation is that the volume element $dV$ becomes $dV_{L_j}$.\\

{\bf Claim 1:} For any $2\leq r\leq \frac{1}{4}r_j^{-1}$, one has
\[
(1-C\e)A_{2r}\leq A_r\leq (1+C\e)A_{2r},
\]
which implies that for any $1\leq a,l\leq k$,
\be\label{claim1 eq1}
\sup_{B_r(x_j)} |\d v_j^a|\leq (1+C\e)r^{C\e},
\ee
\be\label{claim1 eq2}
\sup_{B_r(x_j)} |w_j^l|\leq (1+C\e)r^{C\e},
\ee
\be\label{claim1 eq3}
r^2\fint^{L_j}_{B_r(x_j)}|\d^2 v_j^a|^2\leq C\e r^{C\e}.
\ee

{\bf Claim 2:} There exists a lower triangular matrix $A$ with positive diagonal entries such that $|A-I|\leq C\e$, $A\circ v_j: B_2(x_j)\rightarrow \mb{R}^k $ is $L_j$-drifted $C\e$-splitting, and for each $R>0$, after discarding the last component, the map $A\circ v_j: B_R(x_j)\rightarrow \mb{R}^{k-1}$ is $L_j$-drifted $\e_j(R)$-splitting. Here $\e_j(R)\rightarrow 0$ whenever $R$ is fixed.

From now on, let $v_j$ represents $A\circ v_j$ in claim 2. Thus, as shown in (3.61) and (3.63) in \cite{ChNa2}, we have for any $2\leq r\leq \frac{1}{4}r_j^{-1}$ and $1\leq l\leq k$ that
\be\label{claim2 eq1}
r^2\fint^{L_j}_{B_r(x_j)}|\d w_j^l|^2\leq C\e r^{C\e},
\ee
\be\label{claim2 eq2}
r^2\fint^{L_j}_{B_r(x_j)}\left|\Delta_L|w_j^l|\right|\leq C\delta_j r^{C\e},
\ee
where $w_j^l=dv_j^1\wedge dv_j^2\wedge\cdots\wedge dv_j^l$.

From Claim 2, we know that $(v_j^1,\cdots,v_j^{k-1})$ is $L_j$-drifted $\e_j$-splitting on $B_1(x_j)$. To get a contradiction, we also need to show that after transformation, the average of $|dv_j^k|^2$ is approaching $1$, and $dv_j^k$ and $dv_j^1,\cdots,dv_j^{k-1}$ tend to be orthogonal.

To show this, we first show that the standard deviation of $|dv_j^k|^2$ and $<dv_j^a,dv_j^k>$ ($1\leq a\leq k-1$) are approaching $0$ on scale larger than $1$ (Claims 3 and 4 below) similar to \cite{ChNa2}. However, we use another approach to prove these claims. For Claim 3 below, instead of using the heat kernel, the proof uses an argument involving Green's function.

{\bf Claim 3:} For any $R\ge 1$, we have
\be\label{claim3 eq1}
\fint^{L_j}_{B_R(x_j)} \left||w_j^l|^2-\fint^{L_j}_{B_R(x_j)}|w_j^l|^2\right|\leq \e_j(R),\ \forall 1\leq l\leq k,
\ee
and
\be\label{claim3 eq2}
\fint^{L_j}_{B_R(x_j)}\left|<d v_j^a,d v_j^k>-\fint^{L_j}_{B_R(x_j)}<d v_j^a,d v_j^k>\right|\leq \e_j(R),\ \forall 1\leq a\leq k-1.
\ee

{\it Proof of Claim 3:} Fix an $R\geq 1$. For any $x\in B_R(x_j)$ and $1\leq l\leq k$, let
\[
M^R(x)=\sup_{r\leq R}\fint^{L_j}_{B_r(x)} \left|\Delta_L |w_j^l|\right|.
\]
Then as in (3.65) in \cite{ChNa2}, since we have \eqref{claim2 eq2}, by the maximal function arguments, there exists a subset $U_j\subseteq B_R(x_j)$ satisfying
\be\label{U_j}
\frac{\vol_{L_j}(B_R(x_j)\setminus U_j)}{\vol_{L_j}(B_R(x_j))}\leq \e_j(R),
\ee
\be\label{maximal function}
M^R(x)\leq \e_j(R),\ \forall x\in U_j.
\ee

To get \eqref{claim3 eq1}, it suffices to show that
\be\label{claim3 eq3}
\left||w_j^l|^2(x)-|w_j^l|^2(y)\right|\leq \e_j(R), \ \forall x,y\in U_j,
\ee
because it will then follow that
\[
\al
&\fint^{L_j}_{B_R(x_j)}\left||w_j^l|^2-\fint_{B_R(x_j)}|w_j^l|^2\right|\\
\leq& \fint^{L_j}_{B_R(x_j)}\left||w_j^l|^2(y)-|w_j^l|^2(x)\right|dy + \left|\fint^{L_j}_{B_R(x_j)}|w_j^l|^2(x)-|w_j^l|^2(z)dz\right|\\
\leq& \frac{2}{\vol_{L_j}(B_R(x_j))}\left[\int_{U_j}+\int_{B_R(x_j)\setminus U_j}\right]\left||w_j^l|^2(y)-|w_j^l|^2(x)\right|e^{-L_j}dy\\
\leq& \e_j(R)+C\frac{\vol_{L_j}(B_R(x_j)\setminus U_j)}{\vol_{L_j}(B_R(x_j))}\\
\leq& \e_j(R).
\eal
\]

Since $|w_j^l|\leq C(n)$, to show \eqref{claim3 eq3}, we only need to show
\[
\left||w_j^l|(x)-|w_j^l|(y)\right|\leq \e_j(R),\ \forall x,y\in U_j.
\]
First, choose a cut-off function $\phi$ such that $\phi=1$ in $B_{\frac{1}{8}r_j^{-1}}(x_j)$, $\phi=0$ in $B^c_{\frac{1}{4}r_j^{-1}}(x_j)$, and
\be\label{cutoff function}
|\d \phi|^2+|\Delta \phi|\leq Cr_j^2.
\ee

Denote by $G_{L_j}(x,y)$ the Green's function for the drifted Laplacian $\Delta_{L_j}$ on $\M_j$. Since $|L_j|$ is bounded on $B_{r_j^{-1}}(x_j)$, for the Green's function for $\Delta_{L_j}$, we still have that
\be\label{Green's function estimate}
|G_{L_j}(x,y)|\leq \frac{C}{d(x,y)^{n-2}},\ and\ |\d_y G_{L_j}(x,y)|\leq \frac{C}{d(x,y)^{n-1}}, \qquad x, y \in
B_{\frac{1}{4}r_j^{-1}}(x_j).
\ee

Without loss of generality, we may assume that $R\leq \frac{1}{16}r_j^{-1}$. Thus, for $x,y\in U_j$.
\[
\al
&\left||w_j^l|(x)-|w_j^l|(y)\right|\\
&=\left|\phi|w_j^l|(x)-\phi|w_j^l|(y)\right|\\
&=\left|\int_{\M_j}\left(G_{L_j}(x,z)-G_{L_j}(y,z)\right)\Delta_{L_j}(\phi|w_j^l|)~e^{-L_j}dz\right|\\
&\leq \int_{\M_j}\left|G_{L_j}(x,z)-G_{L_j}(y,z)\right||\Delta_{L_j} \phi||w_j^l|e^{-L_j}dz+
2\int_{\M_j}\left|G_{L_j}(x,z)-G_{L_j}(y,z)\right||\d \phi|\left|\d|w_j^l|\right|e^{-L_j}dz\\
&\quad +\int_{\M_j}\left|G_{L_j}(x,z)-G_{L_j}(y,z)\right|\phi\left|\Delta_{L_j}|w_j^l|\right|e^{-L_j}dz\\
&: =I+II+III.
\eal
\]
Using \eqref{claim1 eq2}, \eqref{cutoff function} and \eqref{Green's function estimate}, we have
\[
\al
I&\leq \int_{B_{\frac{1}{4}r_j^{-1}}(x_j)\setminus B_{\frac{1}{8}r_j^{-1}}(x_j)}|\d G_{L_j}(x^*,z)|\cdot d(x,y)\cdot Cr_j^{2-2C\e}e^{-L_j}dz \leq \frac{CR}{r_j^{1-n}}\cdot r_j^{2-2C\e}\cdot \vol_{L_j}(B_{\frac{1}{4}r_j^{-1}}(x_j))\\
&\leq CRr_j^{1-C\e}\leq \e_j(R).
\eal
\]
Similarly, from \eqref{claim2 eq1}, \eqref{cutoff function} and \eqref{Green's function estimate}, one gets $II\leq \e_j(R)$.

Finally, one has
\[
\al
III&\leq \int_{B_{2R}(x_j)}\left(|G_{L_j}(x,z)|+|G_{L_j}(y,z)|\right)\left|\Delta_{L_j}|w_j^l|\right|e^{-L_j}dz\\
&\quad +\int_{M_j\setminus B_{2R}(x_j)}\left(|G_{L_j}(x,z)-G_{L_j}(y,z)|\right)\left|\Delta_{L_j}|w_j^l|\right|\phi e^{-L_j}dz\\
&: =(1)+(2),
\eal
\]
where, by \eqref{maximal function} and \eqref{Green's function estimate},
\[
\al
(1)&\leq \sum_{k=-\infty}^2\int_{B_{2^kR}(x)\setminus B_{2^{k-1}R}(x)}|G_{L_j}(x,z)|\left|\Delta_{L_j}|w_j^l|\right|e^{-L_j}dz\\
&\quad +\sum_{k=-\infty}^2\int_{B_{2^kR}(y)\setminus B_{2^{k-1}R}(y)}|G_{L_j}(y,z)|\left|\Delta_{L_j}|w_j^l|\right|e^{-L_j}dz\\
&\leq \sum_{k=-\infty}^2 \frac{C\vol_{L_j}(B_{2^kR}(x))}{\left(2^{k-1}R\right)^{n-2}}\fint^{L_j}_{B_{2^kR}(x)}\left|\Delta_{L_j}|w_j^l|\right|~dz+ \sum_{k=-\infty}^2 \frac{C\vol_{L_j}(B_{2^kR}(y))}{\left(2^{k-1}R\right)^{n-2}}\fint^{L_j}_{B_{2^kR}(y)}\left|\Delta_{L_j}|w_j^l|\right|~dz\\
&\leq CR^2\e_j(R)\sum_{k=-\infty}^2 2^{2k}\leq \e_j(R),
\eal
\]
while from \eqref{claim2 eq2}, we have
\[
\al
(2)&\leq \sum_{k=2}^{\infty}\int_{B_{2^kR}(x_j)\setminus B_{2^{k-1}R}(x_j)}|\d G_{L_j}(x^*,z)|d(x,y)\left|\Delta_{L_j}|w_j^l|\right|\phi e^{-L_j}dz\\
&\leq \sum_{k=2}^{\infty}\frac{CR\vol_{L_j}(B_{2^kR}(x_j))}{\left(2^{k-2}R\right)^{-1}}\fint^{L_j}_{B_{2^kR}(x_j)}\left|\Delta_{L_j}|w_j^l|\right|\phi~dz\\
&\leq C\delta_j\sum_{k=2}^{\infty}2^{-k}\cdot(2^kR)^{C\e}\leq \e_j(R).
\eal
\]
Therefore, we get $III\leq \e_j(R)$, and this finishes the proof of \eqref{claim3 eq1}.

The proof of \eqref{claim3 eq2} is similar. Firstly, notice that for the maximal function argument to work, by Claim 2, one has for any $1\leq a\leq k-1$ and $2\leq r\leq \frac{1}{2}r_j^{-1}$,
\be\label{higher order estimate}
\al
r^2\fint^{L_j}_{B_r(x_j)}\left|\Delta_{L_j}<d v_j^a,d v_j^k>\right|
=&r^2\fint^{L_j}_{B_r(x_j)}\left|2<\d^2 v_j^a, \d^2 v_j^k> + 2(Ric_{\M_j}+\d^2 L_j)(\d v_j^a, \d v_j^k)\right|\\
\leq& 2\left[r^2\fint^{L_j}_{B_r(x_j)}|\d^2 v_j^a|^2\right]^{1/2}\left[r^2\fint^{L_j}_{B_r(x_j)}|\d^2 v_j^k|^2\right]^{1/2}+C\e\delta_jr_j^2r^2\\
\leq& \e_j(r).
\eal
\ee
Thus, an analogue of the proof of \eqref{claim3 eq3} gives
\be\label{claim3 eq4}
|<d v_j^a, d v_j^k>(x)-<d v_j^a, d v_j^k>(y)|\leq \e_j(R),\ \forall x,y\in U_j.
\ee
For any $x\in U_j$, by \eqref{U_j}, \eqref{maximal function} and \eqref{claim3 eq4}, we have
\[
\al
&\fint^{L_j}_{B_R(x_j)}\left|<d v_j^a, d v_j^k>(x)- <d v_j^a, d v_j^k>(z)\right|dz\\
\leq& \frac{1}{\vol_{L_j}(B_R(x_j))}\left[\int_{U_j}+\int_{B_R(x_j)\setminus U_j}\right]\left|<d v_j^a, d v_j^k>(x)-<d v_j^a, d v_j^k>(z)\right| e^{-L_j}dz\\
\leq& \e_j(R).
\eal
\]
Thus, it follows that
\be\label{claim4 eq3}
\al
&\fint^{L_j}_{B_R(x_j)}\left|<d v_j^a, d v_j^k>-\fint^{L_j}_{B_R(x_j)}<d v_j^a, d v_j^k>\right|\\
\leq& \fint^{L_j}_{B_R(x_j)}\left|<d v_j^a, d v_j^k>(y)-<d v_j^a, d v_j^k>(x)\right|dy\\
&\ +\fint^{L_j}_{B_R(x_j)}\left|<d v_j^a, d v_j^k>(x)- <d v_j^a, d v_j^k>(z)\right|dz\\
\leq& \e_j(R).
\eal
\ee
This finishes the proof of Claim 3.\\

Next, we follow a method in \cite{Bam} to show Claim 4 below and derive the contradiction.

{\bf Claim 4:} For any $R\geq 1$, we have
\be\label{claim4 eq1}
\fint^{L_j}_{B_R(x_j)}\left||d v_j^k|^2-\fint^{L_j}_{B_R(x_j)}|d v_j^k|^2\right|\leq \e_j(R).\\
\ee
The details of the proof of the Claim was not given in \cite{Bam}. For readers' convenience, we give a proof in Appendix A.

Now similar to the arguments on page 101 in \cite{Bam}, let
\[
a_j^l=-\fint^{L_j}_{B_2(x_j)}<dv_j^l, dv_j^k>\bigg/\fint^{L_j}_{B_2(x_j)}|d v_j^l|^2, \ \forall 1\leq l\leq k-1,
\]
\[
\tilde{v}_j^l=v_j^l,\quad \tilde{v}_j^k=v_j^k+\sum_{l=1}^{k-1}a_j^lv_j^l.
\]

and
\[ \hat{v}_j^l=\tilde{v}_j^l,\quad \hat{v}_j^k=\tilde{v}_j^k\Big/\fint^{L_j}_{B_2(x_j)}|d\tilde{v}_j^k|^2.\] 

Then $(\hat{v}_j^1,\cdots,\hat{v}_j^k): B_1(x_j)\rightarrow \mb{R}^k$ is an $L_j$-drifted $\e_j$-splitting map, which contradicts to the inductive hypothesis when $j$ is sufficiently large. The details can also be found in the Appendix A.

Hence, this finishes the proof of part a).\\

To show b), denote by $v=(v^1,\cdots,v^k)=A\circ f$ the $L$-drifted $\e$-splitting map from $B_r(x)$ to $\mb{R}^k$. From the definition, we have
\[
\fint^L_{B_r(x)}\left||dv^1|^2-1\right|\leq \e^2,
\]
which together with the fact that $f$ is an $L$-drifted $\delta$-splitting map, implies that
\[
1-\e^2\leq a_{11}^2\fint^L_{B_r(x)}|df^1|^2\leq a_{11}^2(1+C\delta),
\]
i.e.,
\[
a_{11}\geq \frac{1}{2}.
\]

Similarly, since
\[
v^2=a_{21}f^1+a_{22}f^2=\frac{a_{21}}{a_{11}}v^1+a_{22}f^2,
\]
we have
\[
\al
a_{22}^2(1+C\delta)&\geq\fint^L_{B_r(x)}|a_{22}d f^2|^2\\
&= \fint^L_{B_r(x)}|dv^2|^2+\frac{a_{21}^2}{a_{11}^2}\fint^L_{B_r(x)}|dv^1|^2-2\frac{a_{21}}{a_{11}}\fint^L_{B_r(x)}<dv^1,dv^2>\\
&\geq 1-\e^2 + (1-\e^2)\frac{a_{21}^2}{a_{11}^2}-\e^2\left|\frac{a_{21}}{a_{11}}\right|\\
&\geq c_1.
\eal
\]
Obviously, when $\delta$ is chosen small enough so that $C\delta<1$, then we get $a_{22}\geq c_0$.

In general, notice that
\[
a_{ll}f^l =v^l - a_{l1}f^1-a_{l2}f^2-\cdots-a_{l(l-1)}f^{l-1}=v^l - \eta_1v^1-\eta_2v^2-\cdots-\eta_{l-1}v^{l-1},
\]
where $\eta_i$'s are constants depending on the entries $a_{ij}$, $1\leq i,j\leq l$.

Since $dv^1, \cdots, dv^{l-1}, dv^l$ are almost orthonormal under the inner product $\fint^L_{B_r(x)}<\cdot,\cdot>$, it is not hard to see that
\[
\fint^L_{B_r(x)}a_{ll}^2|df^l|^2=\fint^L_{B_r(x)}\left|v^l - \eta_1v^1-\eta_2v^2-\cdots-\eta_{l-1}v^{l-1}\right|^2\geq c_1,
\]
regardless the values of $\eta_1,\cdots,\eta_{l-1}$.
Thus, we get
\[
a_{ll}\geq c_0,
\]
due to the fact that $|df^l|^2\leq 1+C\delta$.

The proof of the theorem is completed.
\end{proof}



\section{The Slicing Theorem and Proof of Theorem \ref{codimension 4}}

Using the Transformation Theorem, we are able to prove the Slicing Theorem. But before that, we need two more lemmas. Assume that $f:B_2(p)\rightarrow \mb{R}^k$ is an $L$-drifted $\delta$-splitting map. For any open set $U$ and $1\leq l\leq k$, define measure
\be\label{measure mu}
\mu_L^l(U)=\int_{U}|w^l| dV_L\bigg/\int_{B_{\frac{3}{2}}(p)}|w^k|dV_L,
\ee
where $w^l=df^1\wedge df^2\wedge\cdots\wedge df^l$, and $dV_L=e^{-L}dV$ is the weighted volume element.

Using similar arguments as in Lemma 4.1 in \cite{ChNa2}, we can show a doubling property for $\mu_L^l$.

\begin{lemma}\label{lem measure doubling}
For any $x\in B_1(p)$, $s_{L,x}^{\delta}\leq r\leq 1/4$ and $1\leq l\leq k$, we have
\be\label{measure doubling}
\mu_L^l(B_{2r}(x))\leq C(n) \mu_L^l(B_r(x)).
\ee
\end{lemma}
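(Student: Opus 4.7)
The proof will follow the pattern of Lemma 4.1 in \cite{ChNa2}, with the ordinary Laplacian and volume replaced by the drifted Laplacian $\Delta_L$ and the weighted volume $dV_L$. Two standing ingredients will be used throughout: (i) weighted volume doubling $\vol_L(B_{2r}(x)) \leq C(n) \vol_L(B_r(x))$ for $r \leq 1$, which follows from the Bakry--\'Emery volume comparison (Theorem 1.2 in \cite{WW}) under $\Ric + \d^2 L \geq -(n-1)\delta$; and (ii) the elliptic Harnack inequality and Green's function bounds for $\Delta_L$ on balls of radius $\leq 1$, which are available under $|\d L|\leq \delta$ and $\vol(B_{10}(p))\geq \rho$ (cf.\ Section 2 and \cite{WZ}).

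Since $\mu_L^l(B_r(x))$ is a fixed multiple of $\vol_L(B_r(x))\,\fint^L_{B_r(x)}|w^l|$, the weighted volume doubling reduces the claim to the average doubling
\[
\fint^L_{B_{2r}(x)} |w^l| \leq C(n) \fint^L_{B_r(x)} |w^l|.
\]
Setting $u := |w^l|$, I would compare $u$ with its $\Delta_L$-harmonic replacement $h$ on $B_{2r}(x)$, i.e.\ the solution of $\Delta_L h = 0$ in $B_{2r}(x)$ with $h = u$ on $\pa B_{2r}(x)$. The maximum principle gives $h \geq 0$, and the Harnack inequality for $\Delta_L$ yields $\sup_{B_r(x)} h \leq C(n) \inf_{B_r(x)} h$, which combined with weighted volume doubling produces
\[
\fint^L_{B_{2r}(x)} h \leq C(n) \fint^L_{B_r(x)} h.
\]
The difference $v := u - h$ solves $\Delta_L v = \Delta_L u$ on $B_{2r}(x)$ with zero boundary data; representing $v$ via the Dirichlet Green's function $G_L$ of $\Delta_L$ on $B_{2r}(x)$ (which obeys the standard bound $|G_L(y,z)| \leq C d(y,z)^{2-n}$ thanks to $|L|$ being locally bounded after normalization, as in Section 2) and applying Fubini gives
\[
\fint^L_{B_{2r}(x)} |v| \leq C r^2 \fint^L_{B_{2r}(x)} |\Delta_L u|.
\]
The $L$-singular scale condition, applied at scale $2r$ (or, when $2r > 1/4$, via a short dyadic summation over scales in $[r,1/4]$), bounds the right-hand side by $C\delta \fint^L_{B_{2r}(x)} u$. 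Plugging this into $u = h + v$ and choosing $\delta$ small enough to absorb the error term into the left-hand side then delivers the average doubling.

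The main obstacle I anticipate is a bookkeeping point rather than a conceptual one: the $L$-singular scale condition in the previous section is formulated only for scales in $[s_{L,x}^\delta,\,1/4]$, so when $r$ lies close to $1/4$ one cannot directly apply it at scale $2r$. This is handled either by restricting attention to $r \leq 1/8$ and then bridging the gap $r\in[1/8,1/4]$ by a covering of $B_{2r}(x)$ with finitely many balls of radius $\leq 1/4$ on which the condition does hold, or by observing that for such $r$ the desired doubling follows from uniform upper and lower bounds on $\fint^L_{B_r(x)}|w^l|$ across a bounded range of scales. Modulo this technicality, the argument is a direct drifted-Laplacian analogue of the proof of Lemma 4.1 in \cite{ChNa2}.
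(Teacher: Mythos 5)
Your reduction to an average-doubling statement, the Green's function estimate for $v=u-h$, and the absorption at the end are all fine, and your second fix for the range $r\in(1/8,1/4]$ (where $2r>1/4$) also works, since for balls of definite size the $\delta$-splitting hypothesis on $B_2(p)$ plus weighted volume doubling already pins $\fint^L|w^l|$ near $1$. The genuine gap is the step ``Harnack plus weighted volume doubling gives $\fint^L_{B_{2r}(x)}h\leq C\,\fint^L_{B_r(x)}h$.'' The Harnack inequality controls the nonnegative $\Delta_L$-harmonic replacement $h$ only on interior balls such as $B_r(x)$; it says nothing about $h$ in the boundary layer of $B_{2r}(x)$, where $h$ agrees with the boundary data $u=|w^l|$, and a priori the average of $h$ over $B_{2r}(x)$ can be dominated by exactly that layer. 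In the Euclidean model this is rescued by the exact mean value property $\fint_{B_{2r}(x)}h=h(x)$, but for $\Delta_L$ on a manifold with only Bakry--\'Emery bounds there is no mean value property, and the inequality $\fint^L_{B_{2r}(x)}h\leq C\,h(x)$ for functions harmonic on $B_{2r}(x)$ with uncontrolled boundary values is a Carleson-type estimate on harmonic measure of the geodesic ball, which does not follow from interior Harnack and doubling and is not established anywhere in the paper. This cannot be brushed aside, because the whole point of the lemma is that $|w^l|$ may be very small on $B_r(x)$ compared with its values near $\partial B_{2r}(x)$; your argument needs precisely to rule that out for $h$, which is what is missing. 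Also, your first fallback for $2r>1/4$ (covering $B_{2r}(x)$ by small balls ``on which the condition holds'') does not work as stated, since the $L$-singular scale hypothesis is only assumed for balls centered at $x$.

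For comparison, the intended proof (following Lemma 4.1 of \cite{ChNa2}, and exactly parallel to the proof of Lemma \ref{lem measure comparison} in the paper) does not touch the boundary-value problem at all: for $s_{L,x}^{\delta}\leq r\leq 1/4$ the Transformation Theorem \ref{transformation theorem} provides a lower triangular $A$ with positive diagonal so that $\bar f=A\circ f$ is $L$-drifted $\e$-splitting on $B_{2r}(x)$; since $\bar w^l=a_{11}\cdots a_{ll}\,w^l$, the splitting property gives $\fint^L_{B_{2r}(x)}|\bar w^l|\leq 1+C\e$ and, by doubling, $\fint^L_{B_r(x)}|\bar w^l|\geq 1-C\e$, and the constant $a_{11}\cdots a_{ll}$ cancels in the ratio, so weighted volume doubling finishes the proof in a few lines. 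If you insist on an argument using only the singular-scale condition, the robust implementation of your idea is heat-kernel (or Green's function) mollification at scale $t\sim r^2$ as in Claim 1 of the proof of Theorem \ref{transformation theorem} and in \cite{ChNa2}, Section 3: mollified averages have no boundary layer, and their variation across dyadic scales is controlled by $\int|\Delta_L|w^l||$ via the singular-scale hypothesis; Dirichlet harmonic replacement is the wrong tool here.
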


Moreover, by \eqref{transformation eq1} Theorem \ref{transformation theorem} part (b), and following a similar proof, one can actually derive a slight more general result than Lemma 4.2 in \cite{ChNa2}, which is needed for completing the proof of the Slicing Theorem. More explicitly,

\begin{lemma}\label{lem measure comparison}
For any $x\in B_1(p)$, $s_{L,x}^{\delta}\leq r\leq 1/4$ and $1\leq l\leq k$, we have
\be\label{measure comparison}
\left|f(B_r(x))\right|\leq Cr^{-(n-k)} \mu_L^l(B_r(x)),
\ee
where $|f(B_r(x))|$ denotes the Euclidean measure of $f(B_r(x))\subseteq\mb{R}^k$.
\end{lemma}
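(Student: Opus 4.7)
The plan is to reduce to the Transformation Theorem via a change of variables, using the lower-triangular structure of the transformation matrix together with the lower bound $a_{ii}\ge c_0$ from part (b) in a crucial way. First, for the given $x$ and $r$, I would apply Theorem \ref{transformation theorem} on $B_r(x)$ to obtain a lower triangular matrix $A=(a_{ij})$ with $a_{ii}\geq c_0(n)>0$ such that $\tilde{f}:=A\circ f: B_r(x)\to\mb{R}^k$ is an $L$-drifted $\e$-splitting map, where $\e$ will be chosen small depending on the desired $C$. A simple but essential observation is that, because $A$ is lower triangular, the upper-left $l\times l$ block $A_l$ has the same diagonal entries $a_{11},\dots,a_{ll}$, and $\tilde{f}^{(l)}:=(\tilde{f}^1,\dots,\tilde{f}^l)=A_l\cdot(f^1,\dots,f^l)^T$. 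Consequently
\[
|\tilde{w}^l| = |\det A_l|\,|w^l| = \Big(\prod_{i=1}^{l} a_{ii}\Big)|w^l|,\qquad |\det A|=\prod_{i=1}^{k}a_{ii}.
\]

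Next I would establish the two main one-sided estimates. For the image measure: since $\tilde{f}$ is Lipschitz with constant $1+\e$, we have $|\tilde{f}(B_r(x))|\leq Cr^k$; and since $\tilde{f}(B_r(x))=A(f(B_r(x)))$ as sets in $\mb{R}^k$, the Euclidean change of variables yields
\[
|f(B_r(x))| \leq \frac{C r^k}{\prod_{i=1}^k a_{ii}}.
\]
For the lower bound on $\int |w^l|\,dV_L$: the first $l$ components $\tilde{f}^{(l)}$ inherit the $\e$-splitting property from $\tilde{f}$, so using $|\tilde{w}^l|^2=\det(\langle d\tilde{f}^i,d\tilde{f}^j\rangle_{1\leq i,j\leq l})$ with this Gram matrix being $L^2$-close to the identity, plus $|\tilde{w}^l|\leq (1+\e)^l$, one obtains $\fint^L_{B_r(x)}|\tilde{w}^l|\geq 1-C\e$. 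Combined with Bakry--\'Emery Bishop--Gromov volume comparison (giving $\vol_L(B_r(x))\ge cr^n$ from the noncollapsing $\vol(B_{10}(p))\ge\rho$ and $|\nabla L|\le\delta$), this yields
\[
\int_{B_r(x)}|w^l|\,dV_L \;\geq\; \frac{c\,r^n}{\prod_{i=1}^l a_{ii}}.
\]

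Finally I would combine these bounds. Since $|w^k|\leq(1+\delta)^k\leq C$ on $B_2(p)$, the normalizing denominator in $\mu_L^l$ satisfies $\int_{B_{3/2}(p)}|w^k|\,dV_L \leq C$, so
\[
|f(B_r(x))|\cdot\!\int_{B_{3/2}(p)}\!|w^k|\,dV_L \leq \frac{C r^k}{\prod_{i=1}^k a_{ii}},\qquad r^{-(n-k)}\!\!\int_{B_r(x)}\!|w^l|\,dV_L \geq \frac{c\,r^k}{\prod_{i=1}^l a_{ii}}.
\]
The desired inequality then reduces to $\prod_{i=l+1}^{k}a_{ii}\geq c>0$, which is precisely the content of Theorem \ref{transformation theorem} (b): each $a_{ii}\geq c_0(n)$ gives $\prod_{i=l+1}^{k}a_{ii}\geq c_0^{k-l}\geq c_0^{k}$.

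The main obstacle, conceptually, is exactly the step where $l<k$. The classical Cheeger--Naber argument works cleanly for $l=k$ because the ratio $|\det A|/\prod_{i=1}^{k}a_{ii}=1$ cancels; for $l<k$ one is left with the extra factor $\prod_{i=l+1}^{k}a_{ii}$ that must be controlled from below uniformly in $x$ and $r\in[s_{L,x}^\delta,1/4]$. The strengthening in Theorem \ref{transformation theorem} (b) — that the diagonal entries of the transformation matrices remain bounded away from $0$ uniformly — is precisely what makes the generalization go through; without this input one could only recover the original $l=k$ statement.
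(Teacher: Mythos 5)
Your proof is correct and follows essentially the same route as the paper: apply the Transformation Theorem to get a lower-triangular $A$ with $\tilde f=A\circ f$ an $L$-drifted $\e$-splitting map, bound $|\tilde f(B_r(x))|\le Cr^k$ and $\fint^L_{B_r(x)}|\tilde w^l|\ge 1-C\e$, and then undo the change of variables, using part (b) exactly where the paper does, to control the leftover factor $\prod_{i=l+1}^k a_{ii}\ge c_0^{k-l}$. The only cosmetic differences are that the paper applies the theorem on $B_{2r}(x)$ and keeps the weighted-volume ratio $\vol_L(B_r(x))/\vol_L(B_{3/2}(p))$ intact, whereas you work on $B_r(x)$ and bound numerator and denominator separately (which implicitly uses the normalization making $|L|$ bounded, as the paper also does).
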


\proof By Theorem \ref{transformation theorem}, there is a lower triangular matrix $A=(a_{ij})\in GL(k)$ with positive diagonal entries such that
\[
\bar{f}=A\circ f: B_{2r}(x)\rightarrow \mb{R}^k
\]
is an $\e$-splitting, and hence
\be\label{w'}
\fint^L_{B_{r}(x)}\left||\bar{w}^l|-1\right|\leq C\fint^L_{B_{2r}(x)}\left||\bar{w}^l|-1\right|\leq C\e,
\ee
where $\bar{w}^l=d\bar{f}^1\wedge\cdots\wedge d\bar{f}^l$.

Define
\[
\bar{\mu}^l_L(U)=\frac{\int_{U}|\bar{w}^l|dV_L}{\int_{B_{3/2}(p)}|w^k|dV_L}.
\]
Since $f$ is $L$-drifted $\delta$-splitting on $B_2(p)$, by the volume comparison, it is $L$-drifted $C\delta$-splitting on $B_{3/2}(p)$.

This together with \eqref{w'} implies
\be\label{mu'}
\al
\bar{\mu}^l_L(B_r(x))=&\frac{\int_{B_r(x)}|\bar{w}^l|dV_L}{\int_{B_{3/2}(p)}|w^k|dV_L}\\
\geq& (1-C\e)\frac{\vol_L(B_r(x))}{\vol_L(B_{3/2}(p))}\fint^L_{B_r(x)}|\bar{w}^l|\\
\geq& Cr^n.
\eal
\ee

On the other hand, since $|\d \bar{f}|\leq 1+\e$ in $B_{2r}(x)$, it is easy to check that
\[
\bar{f}(B_r(x))\subseteq B_{2r}(\bar{f}(x)).
\]
Thus,
\be\label{u'}
|\bar{f}(B_r(x))|\leq Cr^k.
\ee
Combining \eqref{mu'} and \eqref{u'}, we get
\be\label{u' mu'}
|\bar{f}(B_r(x))|\leq Cr^{-(n-k)}\bar{\mu}^l_L(B_r(x)).
\ee
Notice that
\be\label{det A}
|\bar{f}(B_r(x))|=det(A)|f(B_r(x))|,
\ee
and from \eqref{transformation eq1} in Theorem \ref{transformation theorem} part b), one has
\be\label{det A1}
\bar{\mu}^l_L=a_{11}\cdots a_{ll}\mu^l_L=\frac{det(A)\mu^l_L}{a_{l+1~l+1}\cdots a_{kk}}\leq c_0^{-(k-l)}det(A)\mu^l_L.
\ee
Plugging \eqref{det A} and \eqref{det A1} into \eqref{u' mu'}, the lemma follows immediately. \qed \\

To prove the Slicing theorem, we also need the following higher order integral estimates for a $\delta$-splitting map, the proof of which again is similar to Theorem 1.26 in \cite{ChNa2}.

\begin{theorem}\label{prop derivative estimates}
Given $\rho>0$. For each $\e>0$, there exists a $\delta_1=\delta_1(\e~|n,\rho)>0$ with the following property. Suppose that a manifold $\M^n$ satisfies $Ric+\d^2 L\geq -(n-1)\delta_1$ with $|\d L|\leq \delta_1$, and $\vol(B_{10}(p))\geq \rho$. Let $f:B_2(p)\rightarrow \mb{R}^k$ be an $L$-drifted $\delta$-splitting map. Then we have:\\
(1) There exists $\gamma(n,\rho)>0$ such that for each $1\leq l\leq k$,
\be\label{1}
\fint^L_{B_{3/2}(p)}\frac{|\d^2u^l|^2}{|\d u^l|^{1+\gamma}}\leq \e.
\ee
(2) For any $1\leq l\leq k$, the normal mass of $\Delta_L|w^l|$ satisfies
\be\label{higher order estimate1}
\fint^L_{B_{3/2}(p)}\left|\Delta_L|w^l|\right|\leq \e.
\ee
\end{theorem}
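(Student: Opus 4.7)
The plan is to adapt the proof of Theorem 1.26 in \cite{ChNa2} to the drifted setting, in two steps: first obtain an a priori bounded estimate by a weighted Bochner / integration-by-parts argument, and then upgrade the bound to smallness by a compactness/contradiction argument using the convergence results of \cite{WZ}.

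For part (1), since $\Delta_L f^l=0$, the drifted Bochner formula gives
\[
\Delta_L|\d f^l|^2=2|\d^2 f^l|^2+2(\Ric+\d^2 L)(\d f^l,\d f^l)\geq 2|\d^2 f^l|^2-2(n-1)\delta_1|\d f^l|^2,
\]
and combining with the improved Kato inequality $|\d|\d f^l||^2\leq \frac{2n-1}{2n-2}|\d^2 f^l|^2+|\d L|^2|\d f^l|^2$ (as already used in the proof of Theorem \ref{transformation theorem}) produces
\[
\Delta_L|\d f^l|\geq \frac{1}{n-1}\frac{|\d^2 f^l|^2}{|\d f^l|}-C\delta_1|\d f^l|.
\]
I would choose a cut-off $\phi$ supported in $B_2(p)$ with $\phi\equiv 1$ on $B_{3/2}(p)$ and $|\d\phi|^2+|\Delta_L\phi|\leq C$, multiply the above inequality by $\phi^2(|\d f^l|^2+\eps^2)^{-\gamma/2}$ (the $\eps$-regularization handles the possible vanishing of $|\d f^l|$), integrate against $dV_L$, and integrate by parts once. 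This produces an extra term $\gamma\int\phi^2|\d f^l|^{-1-\gamma}|\d|\d f^l||^2$ which, upon a second application of Kato, can be absorbed into the main term provided $\gamma=\gamma(n)<2/(2n-1)$ is small enough. After treating the cross term $-2\int\phi|\d f^l|^{-\gamma}\langle\d\phi,\d|\d f^l|\rangle$ via a Young-type inequality, sending $\eps\to 0$, and using $|\d f^l|\leq 1+\delta$ to bound the lower-order remainders, one arrives at the a priori estimate
\[
\fint^L_{B_{3/2}(p)}\frac{|\d^2 f^l|^2}{|\d f^l|^{1+\gamma}}\leq C(n,\rho).
\]

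To upgrade this bounded estimate to the desired smallness, I argue by contradiction. If (1) fails, there exist $\e>0$ and a sequence $(\M_j^n,g_j,p_j,L_j,f_j)$ with $\delta_j\to 0$, the above hypotheses, and $\fint^{L_j}_{B_{3/2}(p_j)}|\d^2 f_j^l|^2/|\d f_j^l|^{1+\gamma}>\e$. Gromov compactness together with the convergence theorems of \cite{WZ} (and Lemma \ref{equiv. splitting maps}) let us pass to a Gromov--Hausdorff limit space on which the $f_j^l$ converge to coordinate functions of an isometric splitting, so $|\d f_\infty^l|\equiv 1$ and $\d^2 f_\infty^l\equiv 0$ in the limit, while the uniform a priori bound provides the integrability required to push the singular integrand through the convergence, contradicting the assumed lower bound $\e$. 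Part (2) then follows from a Bochner--Weitzenb\"ock computation for $w^l=df^1\wedge\cdots\wedge df^l$: using $\Delta_L f^i=0$ and the curvature bound yields a pointwise inequality of the form
\[
|\Delta_L|w^l||\leq C\sum_{i=1}^l\frac{|\d^2 f^i|\,|w^l|}{|\d f^i|}+C\delta_1|w^l|,
\]
after which H\"older's inequality and part (1) applied with a slightly smaller $\gamma$ give the desired smallness. The main technical obstacle is justifying the limit-passage in the contradiction argument for the singular integrand $|\d^2 f^l|^2/|\d f^l|^{1+\gamma}$, which requires combining the a priori bound with a truncation procedure on $\{|\d f^l|<\tau\}$ and using the $C^\a$-convergence of the $f_j^l$ away from this small-measure set.
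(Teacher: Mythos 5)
Your second step for part (1) --- upgrading the a priori bound $\fint^L_{B_{3/2}(p)}|\d^2 f^l|^2/|\d f^l|^{1+\gamma}\leq C(n,\rho)$ to smallness by compactness and contradiction --- has a genuine gap. Note first that the splitting hypothesis already gives $\fint^L_{B_2(p)}|\d^2 f^l_j|^2\leq \delta_j^2/4\rightarrow 0$, so on the good set $\{|\d f^l_j|\geq\tau\}$ the truncated integral is $\leq \tau^{-1-\gamma}\delta_j^2\rightarrow 0$ with no limit space needed; the whole content of the theorem is the contribution of the bad set $\{|\d f^l_j|<\tau\}$. There your argument knows only that (i) this set has small measure (Chebyshev from condition (2')) and (ii) the integrand has total mass $\leq C(n,\rho)$. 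An $L^1$-bounded quantity can carry a fixed amount $\e$ of mass on a set of arbitrarily small measure, and $C^\a$ convergence of the $f^l_j$ gives no pointwise control of Hessians on that set; moreover the functional is quadratic in the Hessian, so under the available (weak) convergence you have at best lower semicontinuity, whereas the contradiction needs $\limsup_j\leq 0$. So the truncation procedure you sketch cannot close the argument. The intended route (this is what the paper means by ``similar to Theorem 1.26 in \cite{ChNa2}'', now with $\Delta_L$, $dV_L$ and the modified Kato inequality) extracts smallness directly from the divergence structure: apply the drifted Bochner formula to $(|\d f^l|^2+\eps^2)^{(1-\gamma)/2}$, integrate against a cutoff $\phi$ with respect to $dV_L$, and integrate by parts completely, so that the surviving term is $\int\bigl((|\d f^l|^2+\eps^2)^{(1-\gamma)/2}-1\bigr)\Delta_L\phi\, dV_L$ (using $\int\Delta_L\phi\, dV_L=0$), which is $\leq C\,\fint^L_{B_2(p)}\bigl||\d f^l|^2-1\bigr|^{(1-\gamma)/2}\leq C\delta^{(1-\gamma)/2}$ by condition (2') of the $L$-drifted splitting definition. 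Your Young-inequality treatment of the cross term $-2\int\phi|\d f^l|^{-\gamma}\langle\d\phi,\d|\d f^l|\rangle$ throws away exactly this cancellation, which is why you are left with a bound instead of smallness and are forced into the compactness step.

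For part (2), the claimed pointwise inequality $|\Delta_L|w^l||\leq C\sum_i|\d^2 f^i|\,|w^l|/|\d f^i|+C\delta_1|w^l|$ is not available: at points where $|w^l|$ is small but $\d w^l$ is not, the nonnegative Kato-deficit term $(|\d w^l|^2-|\d|w^l||^2)/|w^l|$ is not dominated by the right-hand side. A warning sign is that if this inequality held, then (since $|w^l|/|\d f^i|\leq(1+\delta)^{l-1}$) part (2) would follow from condition (3') alone and part (1) would be superfluous. What one actually has from the drifted Weitzenb\"ock formula and $\Delta_L f^i=0$ (cf.\ the computation leading to \eqref{higher order estimate}) is a one-sided bound of the shape $\Delta_L|w^l|\geq -C\delta_1-C\sum_{i\neq j}|\d^2 f^i||\d^2 f^j|$ in the appropriate weak (normal-mass) sense; one then writes $|\Delta_L|w^l||=\Delta_L|w^l|+2(\Delta_L|w^l|)^-$, controls the negative part by the Hessian averages (or, keeping the gradients in the denominators, by part (1)), and controls $\fint^L\phi\,\Delta_L|w^l|$ by the same cutoff/divergence argument using $\fint^L\bigl||w^l|-1\bigr|\leq C\delta$. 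With those two repairs the overall strategy is sound and matches the paper's intended adaptation of \cite{ChNa2}.
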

The higher order estimate \eqref{higher order estimate1} will be used in the proof of the Slicing Theorem below, whose proof follows from the Bochner's formula for the drifted Laplacian $\Delta_L$, similar to the proof of \eqref{higher order estimate}.

Now we are ready to prove the Slicing theorem

\begin{theorem}[Slicing Theorem]\label{slicing theorem}
Given $\rho>0$. For each $\e>0$, there exists a $\bar{\delta}=\bar{\delta}(\e~|n,\rho)>0$ such that the following is satisfied. Suppose that a manifold $\M^n$ satisfies $|Ric+\d^2 L|\leq (n-1)\bar{\delta}$, $|\d L|\leq \bar{\delta}$, and $\vol(B_{10}(p))\geq \rho$. Let $f: B_2(p)\rightarrow \mb{R}^{n-2}$ be an $L$-drifted $\delta$-splitting map. Then there is a subset $G_{\e}\subseteq B_1(0^{n-2})$ such that\\
(1) $\vol(G_{\e})\geq \vol(B_1(0^{n-2}))-\e$,\\
(2) $f^{-1}(s)\neq\emptyset$ for each $s\in G_{\e}$,\\
(3) for each $x\in f^{-1}(G_{\e})$ and $r\leq 1/4$, there is a lower triangular matrix $A$ with positive diagonal entries so that $A\circ f: B_r(x)\rightarrow \mb{R}^{n-2}$ is an $L$-drifted $\e$-splitting map.

\end{theorem}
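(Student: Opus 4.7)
The plan is to reduce the Slicing Theorem to the Transformation Theorem (Theorem \ref{transformation theorem}): property (3) is precisely its conclusion at every scale $r \leq 1/4$, provided the $L$-singular scale $s_{L,x}^{\delta_*}$ vanishes at the point in question, where $\delta_* = \delta_*(\e, n, \rho)$ is the parameter produced by Theorem \ref{transformation theorem} from $\e$. So the goal reduces to showing that the bad set $\mathcal{B} := \{x \in B_{5/4}(p) : s_{L,x}^{\delta_*} > 0\}$ has $f$-image of small Euclidean measure in $\mb{R}^{n-2}$. I would choose $\bar{\delta}$ small enough that Theorem \ref{prop derivative estimates} delivers $\fint^L_{B_{3/2}(p)} |\Delta_L|w^l|| \leq \e'$ for each $1 \leq l \leq n-2$, with $\e' := c(n,\rho)\,\e\,\delta_*$ to be fixed at the end.

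The key step is a Vitali covering of $\mathcal{B}$. For each $x \in \mathcal{B}$ I would take $r_x := s_{L,x}^{\delta_*} > 0$; approaching $r_x$ from below (where the singular-scale inequality strictly fails for some index $l_x$) and passing to the limit gives
\[
r_x^{2} \int_{B_{r_x}(x)} |\Delta_L|w^{l_x}||\,dV_L \;\geq\; \delta_*\, \int_{B_{r_x}(x)} |w^{l_x}|\,dV_L .
\]
Extract a Vitali subfamily $\{B_{r_i}(x_i)\}$ with the $B_{r_i}(x_i)$ disjoint and $\mathcal{B} \subset \bigcup_i B_{5r_i}(x_i)$, restricting attention to $r_i \leq 1/20$ so that $5r_i \leq 1/4$; the complementary large-scale part $\{s_{L,x}^{\delta_*} > 1/20\}$ involves uniformly bounded radii and is handled directly by Lemma \ref{lem measure comparison} at a fixed scale. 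Dividing the displayed inequality by $\int_{B_{3/2}(p)} |w^{n-2}|\,dV_L$, which is bounded below by $c(n,\rho) > 0$ since $f$ being $L$-drifted $\delta$-splitting forces $|w^{n-2}| \approx 1$ on most of $B_{3/2}(p)$, summing over the disjoint family, and invoking Theorem \ref{prop derivative estimates} in the numerator yields
\[
\sum_i r_i^{-2}\,\mu_L^{l_i}(B_{r_i}(x_i)) \;\leq\; C\,\e'/\delta_* .
\]

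Next I apply Lemma \ref{lem measure comparison} at scale $5 r_i \geq s_{L,x_i}^{\delta_*}$, together with the doubling from Lemma \ref{lem measure doubling}, to obtain $|f(B_{5r_i}(x_i))| \leq C (5r_i)^{-(n-k)} \mu_L^{l_i}(B_{5r_i}(x_i)) \leq C\,r_i^{-2}\,\mu_L^{l_i}(B_{r_i}(x_i))$ with $k = n-2$. Summing over the Vitali cover produces $|f(\mathcal{B})| \leq C\e'/\delta_* \leq \e$ after the correct choice of $\e'$. Setting $G_\e := (f(B_{5/4}(p)) \cap B_1(0^{n-2})) \setminus f(\mathcal{B})$ then delivers (2) by construction, (1) from this measure bound together with the fact that $f$ is close to an affine chart (so $f(B_{5/4}(p))$ covers $B_1(0^{n-2})$ up to a negligible layer, controlled by the $\delta$-splitting hypothesis), and (3) from Theorem \ref{transformation theorem} applied to every $x \in f^{-1}(G_\e)$, for which $s_{L,x}^{\delta_*} = 0$.

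The main obstacle I expect is the scale bookkeeping linking the Vitali radius to the validity of Lemma \ref{lem measure comparison}: the Vitali radius must be the singular scale itself, and the inequality driving the summation is obtained only in the limit from below the singular scale. This is precisely why the $r^{-(n-k)} = r^{-2}$ factor from Lemma \ref{lem measure comparison} (specific to $k = n-2$) cancels exactly against the $r^{2}$ in the singular-scale inequality. It is this exact cancellation, together with the higher-order estimate Theorem \ref{prop derivative estimates} doing the heavy lifting of controlling $|\Delta_L|w^l||$ in an averaged sense, that is the technical heart of the proof.
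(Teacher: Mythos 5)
Your core argument is essentially the paper's own proof: you cover the set where the $L$-singular scale is positive by a Vitali family of balls at the singular scale, use the (in)equality defining that scale to trade $\delta_*^{-1}r^2\int|\Delta_L|w^l||$ for $\int|w^l|$, cancel the $r^{2}$ against the $r^{-(n-k)}=r^{-2}$ from Lemma \ref{lem measure comparison} (with the doubling Lemma \ref{lem measure doubling}), sum over the disjoint family, and control the total by the higher-order estimate of Theorem \ref{prop derivative estimates}, with the lower bound on $\int_{B_{3/2}(p)}|w^{n-2}|dV_L$ coming from the $\delta$-splitting hypothesis. This is exactly how the paper bounds $|f(D_\delta)|\le \e/2$, and your conclusion of (3) from the Transformation Theorem at points of vanishing singular scale is also the paper's route.

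The genuine gap is in your justification of (1) and (2). You assert that $f(B_{5/4}(p))$ covers $B_1(0^{n-2})$ up to a negligible layer because ``$f$ is close to an affine chart, controlled by the $\delta$-splitting hypothesis.'' A $\delta$-splitting map is \emph{not} pointwise close to an affine map: the hypothesis only gives $|\d f|\le 1+\delta$ and $L^2$-average control of $\langle \d f^i,\d f^j\rangle-\delta_{ij}$ and of $\d^2 f$, which by itself does not yield surjectivity of $f$ onto most of the unit ball (one needs a degree-type or Gromov--Hausdorff argument). The paper handles this with a separate nontrivial input: the generalization of the results of Section 2 of \cite{CCT} to the Bakry-\'Emery setting (Lemma 5.7 in \cite{WZ}), which gives $|B_1(0^{n-2})\setminus f(B_1(p))|\le \e/2$ under a further smallness condition $\delta_2$ on the curvature parameters; $\bar\delta$ is then taken below $\min(\delta_1,\delta_2,\delta)$. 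Your proposal needs this ingredient (or an equivalent argument) to close parts (1)--(2). A secondary, smaller issue: your treatment of the region $\{s_{L,x}^{\delta_*}>1/20\}$ by ``Lemma \ref{lem measure comparison} at a fixed scale'' does not by itself give a \emph{small} image measure, since that lemma only bounds $|f(B_r(x))|$ by $Cr^{-2}\mu_L^l(B_r(x))$; the cleaner fix, used implicitly in the paper, is to note that for $\bar\delta$ small the higher-order estimate forces the singular-scale inequality to hold at all definite scales, so one may assume $s_{L,x}^{\delta}\le 1/32$ and the large-scale part is empty.
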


\begin{proof}
Firstly, by a generalization of the results in section 2 in \cite{CCT} (see e.g. Lemma 5.7 in \cite{WZ}), we know that there exists a $\delta_2>0$ such that when the assumptions of the theorem are satisfied, we have
\be\label{slicing eq1}
\left|B_1(0^{n-2})\setminus f(B_1(p))\right|\leq \e/2.
\ee

Let $\delta$ be the parameter in the Transformation Theorem \ref{transformation theorem}. Set
\[
D_{\delta}=\bigcup_{x\in B_1(p),\  s_{L,x}^{\delta}>0}B_{s_{L,x}^{\delta}}(x).
\]
Next, we show that for $\bar{\delta}$ small enough, it holds that
\[
\left|f(D_{\delta})\right|\leq \e/2.
\]
Then, setting $G_{\e}=f(B_1(p))\setminus f(D_{\delta})$ will finish the proof of the theorem.

The collection of balls $\left\{B_{s_{L,x}^{\delta}}(x)\right\},\ x\in D_{\delta}$ forms a covering of $D_{\delta}$. Therefore, there exists a subcollection of mutually disjoint balls $\{B_{s_j}(x_j)\}$, where $s_j=s_{L,x_j}^{\delta}$, such that
\[
D_{\delta}\subseteq\bigcup_{j}B_{6s_j}(x_j).
\]
Since $s_j$ is the $L$-singular scale, the inequality \eqref{singular scale} reaches equality at $w^{l_j}$ for some $1\leq l_j\leq n-2$, i.e.,
\be\label{equality at singular scale}
s_j^2\fint^L_{B_{s_j}(x_j)}\left|\Delta_L|w^{l_j}|\right|=\delta\fint^L_{B_{s_j}(x_j)}|w^{l_j}|.
\ee
Moreover, we may assume that $\bar{\delta}$ is small enough so that $s_{L,x}^{\delta}\leq 1/32$. Then, by Lemma \ref{lem measure doubling} (see \eqref{measure doubling}), Lemma \ref{lem measure comparison} (see \eqref{measure comparison}), and \eqref{equality at singular scale}, we have
\be\label{uB}
\al
\left|f(D_{\delta})\right|\leq& \sum_j\left|f(B_{6s_j}(x_j))\right|\leq C\sum_j (6s_j)^{-2}\mu^{l_j}_L(B_{6s_j}(x_j))\\
\leq& C\sum_j s_j^{-2}\mu^{l_j}_L(B_{s_j})(x_j)=C\sum_j s_j^{-2}\frac{\int_{B_{s_j}(x_j)}|w^{l_j}|dV_L}{\int_{B_{3/2}(p)}|w^{n-2}|dV_L}\\
=&\frac{C\delta^{-1}}{\int_{B_{3/2}(p)}|w^{n-2}|dV_L}\sum_j\int_{B_{s_j}(x_j)} \left|\Delta_L|w^{l_j}|\right|dV_L.
\eal
\ee
From the fact that $f$ is $L$-drifted $\delta$-splitting on $B_2(p)$, we know that
\[
\fint^L_{B_{3/2}(p)}|w^{n-2}|\geq 1-C\delta.
\]
Putting this and the fact that $\{B_{s_j}(x_j)\}$ are disjoint into \eqref{uB}, we finally reach
\[
\al
\left|f(D_{\delta})\right|\leq& \frac{C\delta^{-1}}{\int_{B_{3/2}(p)}|w^{n-2}|dV_L}\sum_j \int_{B_{s_j}(x_j)}\sum_{l=1}^{n-2} \left|\Delta_L|w^{l}|\right|dV_L\\
\leq& \frac{C\delta^{-1}}{\int_{B_{3/2}(p)}|w^{n-2}|e^{-L}dV}\int_{B_{3/2}(p)}\sum_{l=1}^{n-2} \left|\Delta_L|w^{l}|\right|dV_L\\
\leq& C\delta^{-1}\fint^L_{B_{3/2}(p)}\sum_{l=1}^{n-2} \left|\Delta_L|w^{l}|\right|\\
\leq& \e/2.
\eal
\]
The last step above holds since we may choose $\delta_1=\delta_1(n, \frac{\e}{2}C^{-1}\delta)$ in Theorem \ref{prop derivative estimates}.

Therefore, setting $\bar{\delta}<\min(\delta_1,\delta_2,\delta)$ completes the proof.
\end{proof}

With the Slicing Theorem, we can finish the proof of Theorem \ref{codimension 4}.

\proof[Proof of Theorem \ref{codimension 4}:] Firstly, we need the lemma below to play the role of Lemma 1.21 in \cite{ChNa2}, which generalized the corresponding result in \cite{ChCo1} to the case where Bakry-\'Emery Ricci curvature has a lower bound.

\begin{lemma}\label{lem coordinate approximation}
Given $\rho>0$, for any $\epsilon$, there exist $\delta=\delta(\e~|n,\rho)>0$ such that the following holds. Assume that $\Ric+\d^2 L \geq - \delta g$, $|\d L|\leq \delta$, and
\be\label{noncollapsing}
 \vol (B_{10}(y)) \ge \rho>0, \quad \forall y \in \M.
\ee

(a) If
\[
d_{GH}(B_{\delta^{-1}}(p), B_{\delta^{-1}}((0^k,x^*))\leq \e,
\]
where $(0^k,x^*)\in \mb{R}^k\times C(X)$ with $x^*$ being the vertex of the metric cone $C(X)$ over some metric space $X$, then for any $R\leq 1$, there exists an $L$-drifted $\e$-splitting map $f=(f_1, f_2, \cdots, f_k):\ B_R(p)\rightarrow \mathbb{R}^k$.

(b) If $f=(f_1, f_2, \cdots, f_k):\ B_{8R}(p)\rightarrow \mathbb{R}^k$ is an $L$-drifted $\delta$-splitting map for $R\leq 1$, then there is a map $\Phi: B_R(x)\rightarrow f^{-1}(0)$ such that $(u,\Phi): B_R(p)\rightarrow B_{R}((0^{k},x^*))\subset \mb{R}^k\times f^{-1}(0)$ is an $\e$-Gromov Hausdorff approximation.
\end{lemma}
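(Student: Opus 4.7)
The plan is to adapt the classical Cheeger-Colding construction of splitting maps from approximate Busemann functions to the drifted Laplacian setting, relying on the almost splitting theorem for Bakry-\'Emery Ricci curvature established in \cite{WZ}, together with the weighted Abresch-Gromoll inequality, drifted gradient estimates, and the drifted Bochner formula, which for an $L$-harmonic $f$ reduces to $\Delta_L|\d f|^2=2|\d^2 f|^2+2(\Ric+\d^2 L)(\d f,\d f)$.

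For part (a), I fix an orthonormal basis $e_1,\dots,e_k$ of the $\mb{R}^k$ factor and, using the Gromov-Hausdorff closeness, select pairs of points $q_i^{\pm}\in\M$ corresponding to $(\pm\tfrac{1}{4}\delta^{-1}e_i,x^*)$. Define approximate Busemann functions $b_i^{\pm}(x)=d(x,q_i^{\pm})-d(p,q_i^{\pm})$ and let $f_i$ solve the Dirichlet problem $\Delta_L f_i=0$ on $B_{2R}(p)$ with boundary values $b_i^+$. The Bakry-\'Emery Abresch-Gromoll inequality from \cite{WZ} gives that the excess $e_i=b_i^++b_i^-$ has small $L^{\infty}$ norm on $B_{2R}(p)$; hence $\|f_i-b_i^+\|_{L^\infty(B_R(p))}\to 0$ via the maximum principle for $\Delta_L$, and the drifted gradient estimate gives $|\d f_i|\le 1+\e$. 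The Hessian bound $\fint^L_{B_R(p)}|\d^2 f_i|^2\le\e^2$ then follows by integrating the Bochner identity against a cut-off, observing that $\int(\Delta_L\phi)\,dV_L=0$ allows one to replace $|\d f_i|^2$ by $|\d f_i|^2-1$ in the main term, and using that $\bigl|\,|\d f_i|^2-1\,\bigr|$ is integrally small since $f_i$ is $L^\infty$-close to the distance-type function $b_i^+$. Polarizing the same identity applied to $f_i+f_j$ yields the orthogonality estimate $\fint^L_{B_R(p)}|\langle\d f_i,\d f_j\rangle-\delta_{ij}|^2\le\e^2$.

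For part (b), I first apply the Transformation Theorem to upgrade $f$ on all scales down to the singular scale to a bona fide $\e$-splitting, and use the Slicing Theorem to guarantee essential surjectivity of $f$ onto $B_R(0^k)$. Define $\Phi(x)$ as a limit point in $f^{-1}(0)$ of the gradient flow of $-\tfrac{1}{2}\sum_i(f^i)^2$ starting at $x$; the uniform diagonal lower bound $a_{ii}\ge c_0$ from Theorem \ref{transformation theorem}(b) keeps the relevant gradient bounded below on good regions, and Lemma \ref{lem measure comparison} controls the exceptional measure where this flow may fail to converge cleanly. To verify that $(f,\Phi):B_R(p)\to B_R((0^k,x^*))$ is an $\e$-GH approximation, three conditions need to be checked: $f$ is almost $1$-Lipschitz from $|\d f|\le 1+\delta$; for $x,y$ in the same fiber of $f$ the distance $d(\Phi(x),\Phi(y))$ is approximately $d(x,y)$, via the segment inequality applied to the integrally small quantities $\bigl|\,|\d f^i|^2-1\,\bigr|$ and $|\langle\d f^i,\d f^j\rangle|$; and $(f,\Phi)$ is $\e$-dense by the slicing conclusion combined with part (a) run in reverse to identify $f^{-1}(0)$ as a Gromov-Hausdorff approximation to the cone factor $C(X)$.

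The main obstacle is part (a), specifically showing $\bigl|\,|\d f_i|^2-1\,\bigr|$ is integrally small with a constant depending only on the Gromov-Hausdorff closeness. This requires the classical Cheeger-Colding argument that converts the pointwise Abresch-Gromoll excess bound into an $L^2$ bound on $|\d b_i^+|^2-1$ along minimizing geodesics, which then transfers to $f_i$ via the maximum principle and gradient estimate. In the Bakry-\'Emery setting, the drift term $\langle\d L,\d e_i\rangle$ in the equation for the excess and the drift term $\langle\d L,\d|\d f_i|^2\rangle$ in the Bochner identity must both be absorbed using $|\d L|\le\delta$, and the weighted segment inequality from \cite{WZ} is needed in place of the classical one. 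A secondary subtlety in part (b) is that $f^{-1}(0)$ need not be a topological manifold near the singular scale, so $\Phi$ must be defined via an approximate flow and the bad-measure set is quantitatively controlled by Lemma \ref{lem measure comparison}.
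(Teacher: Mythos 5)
Your part (a) is essentially the argument the paper relies on: the paper simply cites the proof of Lemma 4.11 in \cite{WZ}, which is exactly the Cheeger--Colding construction you describe (approximate Busemann functions, drifted harmonic replacement, weighted Abresch--Gromoll, segment inequality, and the drifted Bochner formula against a cut-off), so that half is fine in approach, if compressed at the step where the integral smallness of $\bigl||\d f_i|^2-1\bigr|$ is extracted.

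Part (b), however, has a genuine gap. You invoke the Transformation Theorem, the Slicing Theorem, and Lemma \ref{lem measure comparison}, but all of these require the \emph{two-sided} bound $|\Ric+\d^2L|\leq (n-1)\delta$, whereas the lemma you are proving assumes only the lower bound $\Ric+\d^2L\geq -\delta g$; moreover the Slicing Theorem is stated only for maps to $\mb{R}^{n-2}$, while here $k$ is arbitrary. So the tools you lean on for ``essential surjectivity'' and for controlling the bad set of your gradient flow are simply not available under the stated hypotheses (and Lemma \ref{lem measure comparison} bounds the Euclidean measure of $f(B_r(x))$ against $\mu_L^l$ --- it says nothing about convergence of the flow of $-\tfrac12\sum_i (f^i)^2$, so even granting it, the construction of $\Phi$ is not actually controlled). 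The paper's route is much lighter and avoids all of this: by the converse direction of Lemma \ref{equiv. splitting maps} (which needs only the lower Bakry-\'Emery bound), the $L$-drifted $\delta$-splitting map $f$ on $B_{8R}(p)$ yields an ordinary $C\delta^{1/2}$-splitting map $u$ on a smaller ball; then one quotes the known fact (the proof of Proposition 11.1 in \cite{Bam}, i.e.\ the Cheeger--Colding/Cheeger--Naber splitting-map-to-GH-approximation lemma, valid under lower Ricci-type bounds) that $B_R(p)$ is $\e$-close to a ball in $\mb{R}^k\times u^{-1}(0)$, and finally uses the closeness of $f$ and $u$ from Lemma \ref{equiv. splitting maps} to replace $u^{-1}(0)$ by $f^{-1}(0)$. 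If you want a self-contained proof of (b), you should reproduce that segment-inequality argument directly for $u$ (no Transformation or Slicing input, and $\Phi$ defined by an almost-nearest-point choice in the zero set rather than by a gradient flow), not route it through machinery whose hypotheses are strictly stronger than those of the lemma.
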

For a proof of part (a), see e.g., the proof of Lemma 4.11 in \cite{WZ}. Part (b) holds because from Lemma \ref{equiv. splitting maps} there is a $C\delta^{1/2}$-splitting map $u$ on $B_{2R}(p)$, which implies that $B_R(p)$ is $\e$ close in Gromov Hausdorff sense to a ball in $\mb{R}^k\times u^{-1}(0)$ (see e.g. the proof of Proposition 11.1 in \cite{Bam}). Then the conclusion in (b) follows from the fact that $f$ and $u$ are close as shown in Lemma \ref{equiv. splitting maps}.

Next, as in \cite{ChNa2}, to rule out the codimension 2 singularity, we only need to show that $\mb{R}^{n-2}\times C(S^1_{\beta})$, $\beta< 2 \pi$, is not the GH limit of sequences of manifolds under our assumptions. The reason is the following.
When the Ricci curvature is bounded, from Theorem 5.2 in \cite{ChCo2}, if there is a codimension 2 singularity, then a tangent cone is a metric cone $\mb{R}^{n-2}\times Y$ where $Y$ is a cone over a one dimensional compact metric space of diameter $\le \pi$. Here the diameter means the maximum length of minimal geodesics. In our setting, the situation is the same by virtue of Theorem 4.3 in \cite{WZ}.

We argue by contradiction, and assume that there is a sequence of pointed Riemannian manifolds $(\M^n_j, g_j, p_j)$ and smooth functions $L_j\in C^{\infty}(\M_j)$ with $|Ric_{\M_j}+\d^2 L_j|\leq (n-1)\delta_j\rightarrow 0$ and $|\d L_j|\leq \delta_j\rightarrow 0$, and $\vol(B_{10}(p_j))\geq \rho$ satisfying
\[
(\M_j, d_j, p_j)\xrightarrow{d_{GH}}(\mb{R}^{n-2}\times C(S^1_{\beta}),d,p),
\]
where $S^1_{\beta}$ is a circle of circumference $\beta<2\pi$ and $p$ is a vertex of the cone.

Let $\e_j\rightarrow 0$, and $f_j: B_{\e_j^{-1}}(p)\rightarrow B_{\e^{-1}_j}(p_j)$ the $\e_j$-Gromov-Hausdorff approximation. Denote by $\mathcal{S}_j=f_j(\mathcal{S})$. Since away from $\mathcal{S}_j$ the balls in $\M_j$ are close to balls in $\mb{R}^n$ in Gromov-Hausdorff sense, as shown in the proof of Theorem \ref{thm harmonic radius bound}, the $W^{1,q}$ ($q>2n$) harmonic radius $r_h(x)$ is continuous. In particular, we have $r_h(x)\geq \frac{\tau}{2}$ for any $x\in B_1(p_j)\setminus T_{\tau}(\mathcal{S}_j)$.

Then we can choose $\delta_j$ small enough so that there is an $L_j$-drifted $\delta_j$-splitting maps $u_j: B_2(p_j)\rightarrow \mb{R}^{n-2}$ satisfying Theorem \ref{slicing theorem}. Hence, it is possible to pick a $s_j\in G_{\e_j}\cap B_{1/10}(p_j)$ and choose the smallest $W^{1,q}$ harmonic radius on the submanifold $u_j^{-1}(s_j)\cap B_1(p_j)$, namely let
\[
r_j=\min\{r_h(x):~ x\in u_j^{-1}(s_j)\cap B_1(p_j)\}.
\]
Assume that $r_j$ is achieved at some point $x_j$, i.e., $r_j=r_h(x_j)$. Then it is not hard to see that $x_j\rightarrow \mathcal{S}_j\cap B_{1/10}(p_j)$ and $r_j\rightarrow 0$.

By Theorem \ref{transformation theorem}, there is a lower triangular matrix $A_j$ with positive diagonal entries, such that $v_j=A_j\circ(u_j-s_j): B_{2r_j}(x_j)\rightarrow \mb{R}^{n-2}$ is an $L_j$-drifted $\e_j$-splitting map.

Proceeding as in \cite{ChNa2}, by passing to a subsequence, the blow-up sequence $(\M^n_j, r_j^{-1}d_j, x_j)$ $\xrightarrow{d_{GH}}(X,d_X,x)$, where $X$ splits off an $\mathbb{R}^{n-2}$ factor. Moreover, $\tilde{v}_j=r_j^{-1}v_j: B_{2}(x_j)\rightarrow \mb{R}^{n-2}$ is an $L_j$-drifted $\e_j$-splitting map. By the proof of Claim 2 in Theorem \ref{transformation theorem}, one can see that $\tilde{v}_j: B_R(x_j)\rightarrow\mb{R}^{n-2}$ is an $L_j$-drifted $C(n,\rho,R)\e_j$-splitting map for any $R>2$. In particular, it implies that $\tilde{v}_j\rightarrow v$ for some $v: X\rightarrow \mb{R}^{n-2}$. Then Lemma \ref{lem coordinate approximation} implies that
\[
X=\mathbb{R}^{n-2}\times v^{-1}(0^{n-2}).
\]

Since for any $y\in \tilde{v}_j^{-1}(0)$, the $W^{1,q}$ radius $r_h(y)\geq 1$, by Theorem \ref{thm compactness}, we know that $X$ is $C^{\a}\cap W^{1,s}$ in a neighborhood of $v^{-1}(0)$. Hence $X$ is a $C^{\a}\cap W^{1,s}$ manifold with $r_h\geq1$, and $(\M_j, r_j^{-2}g_j, x_j)\xrightarrow{C^{\a}\cap W^{1,s}}(X,g_X,x)$ in Cheeger-Gromov sense. In particular, since the $W^{1,q}$ harmonic radius is continuous, we have $r_h(x)=1$.

On the other hand, the expression of the Ricci curvature tensor in harmonic coordinates is
\be\label{Ricci eqn}
\al
g^{ab}\frac{\partial^2 g_{ij}}{\pa v_a\pa v_b} + Q(\pa g, g)= &-2R_{ij}\\
=& -2(R_{ij}+\d_i\d_j L) + 2\d_i\d_j L,
\eal
\ee
where $\{v_1, v_2,\cdots, v_n\}$ is a local harmonic coordinate chart. Since $|Ric+\d^2L|\leq (n-1)r_j^2\rightarrow 0$ and $|\d L|\leq r_j\rightarrow 0$, and the sequence of metrics $\{r_j^{-2}g_j\}$ is converging in $W^{1,s}$ norm on compact sets, one can see that the limit metric $g_{X}$ is a weak solution of the equation
\[
g^{ab}\frac{\partial^2 g_{ij}}{\pa v_a\pa v_b} + Q(\pa g, g)=0.
\]
Therefore, by the standard elliptic regularity theory, it follows that $g_{X}$ is smooth and Ricci flat, and hence $X$ is a flat manifold since the dimension of $v^{-1}(0^{n-2})$ is 2. Moreover, by volume continuity under Gromov-Hausdorff convergence (see e.g. Theorem 4.10 in \cite{WZ}), we have $\vol(B_r(x))\geq C\rho r^n$ for any $r>0$. Thus, it follows that $X=\mb{R}^n$.

Especially, we have $r_h(x)=\infty$ and contradicts to $r_h(x)=1$. Therefore, the singular set has codimension at least 3.\\

Finally, to rule out the codimension 3 singularity, again we can use a similar argument as in \cite{ChNa2}. One just needs to notice that by Lemma \ref{lem coordinate approximation} and Lemma \ref{equiv. splitting maps}, the $\e$-splitting map $u_j: B_2(p_j)\rightarrow \mb{R}^{n-3}$ still exists. Then since the metrics converge in $C^{\a}$ norm and $u_j$ are harmonic functions, we can still get the bounds on the gradient and hessian of $u_j$. Also, the Poisson approximation $h_j$ of the square of the distance function exists by Lemmas 2.3 and 2.4 in \cite{WZ} (See also the proof of Theorem 6.3 in \cite{ZZ}). Since $\Delta h_j=2n$ and the metrics $g_j$ have uniform $C^{\a}$ bound, the standard elliptic regularity theory implies that $h_j$ have $C^2$ bound.

Therefore, this completes the proof. \qed

\appendix

\section{}

In this section, we prove Claim 4 and finish the proof of Theorem \ref{transformation theorem} part a).\\

{\it Proof of Claim 4:} We first show that
\be\label{claim4 eq2}
\fint^{L_j}_{B_R(x_j)}\left||<w_j^{k-1}, d v_j^k>|^2-\fint^{L_j}_{B_R(x_j)}|<w_j^{k-1}, d v_j^k>|^2\right|\leq \e_j(R).
\ee
Here, as usual, define
\[
<w_j^{k-1}, d v_j^k>=\sum_{a=1}^{k-1}<d v_j^a, dv_j^k>dv_j^1\wedge\cdots\wedge\widehat{dv_j^a}\wedge\cdots\wedge dv_j^{k-1}.
\]
Thus,
\[
\al
&\left|<w_j^{k-1}, dv_j^k>\right|^2\\
=&\sum_{a,b=1}^{k-1}<d v_j^a, dv_j^k> <d v_j^b, dv_j^k> <dv_j^1\wedge\cdots\wedge\widehat{dv_j^a}\wedge\cdots\wedge dv_j^{k-1}, dv_j^1\wedge\cdots\wedge\widehat{dv_j^b}\wedge\cdots\wedge dv_j^{k-1}>.
\eal
\]
Since $(v_j^1, \cdots, v_j^{k-1})$ is an $\e_j(R)$-splitting on $B_R(x_j)$ by Claim 2, it is not hard to see that
\be\label{claim4 eq3}
\al
&\fint^{L_j}_{B_R(x_j)}\bigg|<dv_j^1\wedge\cdots\wedge\widehat{dv_j^a}\wedge\cdots\wedge dv_j^{k-1}, dv_j^1\wedge\cdots\wedge\widehat{dv_j^b}\wedge\cdots\wedge dv_j^{k-1}>\\
&\qquad\quad -\fint^{L_j}_{B_R(x_j)}<dv_j^1\wedge\cdots\wedge\widehat{dv_j^a}\wedge\cdots\wedge dv_j^{k-1}, dv_j^1\wedge\cdots\wedge\widehat{dv_j^b}\wedge\cdots\wedge dv_j^{k-1}>\bigg|\leq \e_j(R).
\eal
\ee
Therefore, \eqref{claim4 eq2} follows from \eqref{claim3 eq2} and \eqref{claim4 eq3} immediately.

Notice that $w_j^k=w_j^{k-1}\wedge dv_j^k$, and
\[
|w_j^k|^2=|w_j^{k-1}|^2|dv_j^k|^2-|<w_j^{k-1},dv_j^k>|^2.
\]
From \eqref{claim3 eq1}, we have
\[
\al
\e_j(R)&\geq\fint^{L_j}_{B_R(x_j)}\left||w^k_j|^2-\fint^{L_j}_{B_R(x_j)}|w^k_j|^2\right|\\
&\geq\fint^{L_j}_{B_R(x_j)}\left||w_j^{k-1}|^2|d v_j^k|^2-\fint^{L_j}_{B_R(x_j)}|w_j^{k-1}|^2|d v_j^k|^2\right|\\
&\quad -\fint^{L_j}_{B_R(x_j)}\left||<w_j^{k-1}, dv_j^k>|^2-\fint^{L_j}_{B_R(x_j)}|<w_j^{k-1}, dv_j^k>|^2\right|
\eal
\]
This, together with \eqref{claim4 eq2}, implies that
\be\label{claim4 eq4}
\fint^{L_j}_{B_R(x_j)}\left||w_j^{k-1}|^2|dv_j^k|^2-\fint^{L_j}_{B_R(x)}|w_j^{k-1}|^2|dv_j^k|^2\right|\leq \e_j(R).
\ee
Since
\[
1-C\e_j\leq \fint^{L_j}_{B_R(x_j)}|w_j^{k-1}|^2\leq 1+C\e_j,
\]
one gets
\[
\al
&\fint^{L_j}_{B_R(x_j)}\left||dv_j^k|^2-\fint^{L_j}_{B_R(x_j)}|dv_j^k|^2\right|\\
\leq & (1+C\e_j)\fint^{L_j}_{B_R(x_j)}|w_j^{k-1}|^2(\tilde{y})d\tilde{y}\fint^{L_j}_{B_R(x_j)}\left||dv_j^k|^2(y)-\fint^{L_j}_{B_R(x_j)}|dv_j^k|^2(z)dz\right|dy\\
\leq& C\fint^{L_j}_{B_R(x_j)}\fint^{L_j}_{B_R(x_j)}\left||w_j^{k-1}|^2(\tilde{y})|dv_j^k|^2(y)-\fint^{L_j}_{B_R(x_j)}|w_j^{k-1}|^2(\tilde{y})|dv_j^k|^2(z)dz\right|dyd\tilde{y}\\
\leq& C \fint^{L_j}_{B_R(x_j)}\fint^{L_j}_{B_R(x_j)}\left||w_j^{k-1}|^2(\tilde{y})|dv_j^k|^2(y)-\fint^{L_j}_{B_R(x_j)}|w_j^{k-1}|^2(z)|dv_j^k|^2(z)dz\right|dyd\tilde{y}\\
&\  +C\fint^{L_j}_{B_R(x_j)}\fint^{L_j}_{B_R(x_j)}\left|\fint^{L_j}_{B_R(x_j)}\left[|w_j^{k-1}|^2(z)-|w_j^{k-1}|^2(\tilde{y})\right]|dv_j^k|^2(z)dz\right|dyd\tilde{y}\\
:=& I+II.
\eal
\]

Using the fact that $|dv_j^k|\leq 1+C\e$,  $\fint^{L_j}_{B_R(x_j)}\left||w_j^{k-1}|^2-1\right|\leq \e_j$ and \eqref{claim4 eq4}, one has
\[
\al
I&\leq C \fint^{L_j}_{B_R(x_j)}\fint^{L_j}_{B_R(x_j)}\Bigg||w_j^{k-1}|^2(\tilde{y})|dv_j^k|^2(y)-|dv_j^k|(y)\\
&\hspace{3.8cm} +|dv_j^k|^2(y)-|w_j^{k-1}|^2(y)|dv_j^k|^2(y)\\
&\hspace{3.8cm} +\left.|w_j^{k-1}|^2(y)|dv_j^k|^2(y)-\fint^{L_j}_{B_R(x_j)}|w_j^{k-1}|^2(z)|dv_j^k|^2(z)dz\right|dyd\tilde{y}\\
&\leq \e_j(R),
\eal
\]
and
\[
\al
II&\leq C\fint^{L_j}_{B_R(x_j)}\fint^{L_j}_{B_R(x_j)}\fint^{L_j}_{B_R(x_j)}\left||w_j^{k-1}|^2(z)-|w_j^{k-1}|^2(\tilde{y})\right|dzdyd\tilde{y}\\
&\leq C\fint^{L_j}_{B_R(x_j)}\left||w_j^{k-1}|^2(z)-1\right|dz+C\fint^{L_j}_{B_R(x_j)}\left||w_j^{k-1}|^2(\tilde{y})-1\right|d\tilde{y}\\
&\leq \e_j(R).
\eal
\]
Therefore, it finishes the proof of Claim 4.\qed

Recall that
\[
a_j^l=-\fint^{L_j}_{B_2(x_j)}<dv_j^l, dv_j^k>\bigg/\fint^{L_j}_{B_2(x_j)}|d v_j^l|^2, \ \forall 1\leq l\leq k-1,
\]
and
\[
\tilde{v}_j^l=v_j^l,\quad \tilde{v}_j^k=v_j^k+\sum_{l=1}^{k-1}a_j^lv_j^l.
\]

Since $(v_j^1,\cdots,v_j^k): B_2(x_j)\rightarrow\mathbb{R}^k$ is $L_j$-drifted $C\e$-splitting, and $(v_j^1,\cdots,v_j^{k-1}): B_2(x_j)\rightarrow\mathbb{R}^{k-1}$ is $L_j$-drifted $\e_j$-splitting, it follows that
\[
\left|\fint^{L_j}_{B_2(x_j)}<dv_j^l, dv_j^k>\right|\leq C\e,\ \textrm{and}\ \fint^{L_j}_{B_2(x_j)}\left||d v_j^l|^2-1\right|\leq\e_j^2.
\]
Hence,
\be\label{ajl}
|a_j^l|\leq C\e.
\ee
The above facts and \eqref{claim3 eq2} imply that
\be\label{main proof eq1}
\al
&\fint^{L_j}_{B_2(x_j)}\left|<d \tilde{v}_j^l, d \tilde{v}_j^k>\right|\\
\leq& \fint^{L_j}_{B_2(x_j)}\left|<d v_j^l, dv_j^k>-\fint^{L_j}_{B_2(x_j)}<dv_j^l, d v_j^k>\right|+ \fint^{L_j}_{B_2(x_j)}\left|a_j^l|dv_j^l|^2-a_j^l\fint^{L_j}_{B_2(x_j)}|dv_j^l|^2\right|\\
&\quad +\sum_{m\neq l}^{k-1}\fint^{L_j}_{B_2(x_j)}|a_j^m||<d v_j^l,dv_j^m>|\\
\leq& \e_j,
\eal
\ee
and
\be\label{main proof eq2}
\al
&\fint^{L_j}_{B_2(x_j)}|d\tilde{v}_j^k|^2\\
=&\fint^{L_j}_{B_2(x_j)}|dv_j^k|^2+\sum_{l=1}^{k-1}|a_j^l|^2\fint^{L_j}_{B_2(x_j)}|dv_j^l|^2\\
&\quad + \sum_{l\neq m}^{k-1}a_j^la_j^m\fint^{L_j}_{B_2(x_j)}<dv_j^l,dv_j^m>+\sum_{l=1}^{k-1}2a_j^l\fint^{L_j}_{B_2(x_j)}<dv_j^l,dv_j^k>\\
\geq& 1-C\e.
\eal
\ee
Setting
\[ \hat{v}_j^l=\tilde{v}_j^l,\quad \hat{v}_j^k=\tilde{v}_j^k\Big/\fint^{L_j}_{B_2(x_j)}|d\tilde{v}_j^k|^2,\]
one has
\[
\al
\fint^{L_j}_{B_2(x_j)}\left||d\hat{v}_j^k|^2-1\right|&=\left(\fint^{L_j}_{B_2(x_j)}|d\tilde{v}_j^k|^2\right)^{-1}\fint^{L_j}_{B_2(x_j)}\left||d\tilde{v}_j^k|^2-\fint^{L_j}_{B_2(x_j)}|d\tilde{v}_j^k|^2\right|\\
&\leq \e_j,
\eal
\]
where we have used \eqref{claim4 eq1}, \eqref{ajl}, and the fact that $(v_j^1, \cdots, v_j^{k-1})$ is an $L_j$-drifted $\e_j$-splitting map.
Therefore, by using the similar technique as in Lemma 3.34 in \cite{ChNa2} (or mean value inequality), one can get
\be\label{main proof eq3}
\sup_{B_1(x_j)}|d \hat{v}_j^k|\leq 1+\e_j,\ \forall 1\leq a\leq k,
\ee
and
\be\label{main proof eq4}
\fint^{L_j}_{B_1(x_j)}|\d^2 \hat{v}_j^k|^2\leq \e_j.
\ee
Finally, \eqref{main proof eq1} and \eqref{main proof eq2} give
\be\label{main proof eq5}
\fint^{L_j}_{B_1(x_j)}|<d\hat{v}_j^a,d\hat{v}_j^k>|\leq \e_j,\ 1\leq a\leq k-1.
\ee
It is obvious that \eqref{main proof eq3}, \eqref{main proof eq4} and \eqref{main proof eq5} together with the fact that $(v_j^1,\cdots,v_j^{k-1})$ is $L_j$-drifted $\e_j$-splitting imply that
$(\hat{v}_j^1,\cdots,\hat{v}_j^k): B_1(x_j)\rightarrow \mb{R}^k$ is an $L$-drifted $\e_j$-splitting map. Since $B_1(x_j)$ in the metric $r_j^{-2}g_j$ is exactly the ball $B_{r_j}(x_j)$ in the metric $g_j$ and $\e_j\rightarrow 0$, this means that before rescaling $(\hat{v}_j^1,\cdots,\hat{v}_j^k): B_{r_j}(x_j)\rightarrow \mb{R}^k$ is $L_j$-drifted $\e$-splitting when $j$ is sufficiently large, which contradicts to the inductive hypothesis that there is no matrix $A$ such that $A\circ u$ is $L_j$-drifted $\e$-splitting on $B_{r_j}(x_j)$.

Hence, this finishes the proof of Theorem \ref{transformation theorem} part a).


\section*{Acknowledgements}  Q.S.Z is grateful to the Simons foundation for its support. M. Z's research is partially supported by NSFC Grant No. 11501206. M. Z would like to thank Prof. Huai-Dong Cao for constant support and his interest in this paper.
Both authors wish to thank Professors M. Anderson, Hong Huang, A. Naber,  L. H. Wang, G.F. Wei,  Z.L. Zhang  and X.H. Zhu  for helpful suggestions. We are very grateful to Dr. Kewei Zhang who pointed out a typo in the statement of Theorem 1.2 in an earlier version.
Prof. Z.L. Zhang kindly informed us that he and Prof. W.S. Jiang are able to extend Cheeger and Naber's result under the condition that the Ricci curvature is $L^p$ with $p>n/2$. In this case the metric is $W^{2, p}$ rather than $W^{1, p}$ in the current paper.

\end{document}